\newtheorem{theorem}{Theorem}
\theoremstyle{plain}
\newtheorem{corollary}{Corollary}
\newtheorem{definition}{Definition}
\newtheorem{example}{Example}
\newtheorem{lemma}{Lemma}
\newtheorem{proposition}{Proposition}
\newtheorem{remark}{Remark}
\numberwithin{equation}{section}
\begin{document}
\title[Tate's central extension]{Ad\`{e}le residue symbol and Tate's central extension for multiloop Lie algebras}
\author{Oliver Braunling}
\address{Fakult\"{a}t f\"{u}r Mathematik\\
Universit\"{a}t Duisburg-Essen\\
Thea-Leymann-Stra\ss e 9\\
45127 Essen\\
Germany}
\email{oliver.braeunling@uni-due.de}
\urladdr{http://www.esaga.uni-due.de/oliver.braeunling/}
\thanks{This work has been supported by the Alexander von Humboldt Foundation and the DFG SFB/TR 45 \textquotedblleft Periods,
moduli spaces and arithmetic of algebraic varieties\textquotedblright.}
\subjclass[2000]{Primary 17B56, 17B67; Secondary 32A27}

\begin{abstract}
We generalize the linear algebra setting of Tate's central extension to
arbitrary dimension. In general, one obtains a Lie $(n+1)$-cocycle. We compute
it to some extent. The construction is based on a Lie algebra variant of
Beilinson's adelic multidimensional residue symbol, generalizing Tate's
approach to the local residue symbol for $1$-forms on curves.

\end{abstract}
\maketitle

Firstly, recall that to every Lie algebra $\mathfrak{g}$ one can associate its
loop Lie algebra $\mathfrak{g}[t^{\pm}]$. Iterating this construction, we
obtain so-called \textit{multiloop Lie algebras}, $\mathfrak{g}[t_{1}^{\pm
1},\ldots,t_{n}^{\pm1}]$.\newline To begin with, we show that various classes
of interesting multiloop\ Lie algebras can all be embedded into a large
(infinite-dimensional) Lie algebra:

\begin{theorem}
\label{intro_Thm1UniversalTateCocycle}Let $k$ be a field and $n\geq1$. There
is a universal Lie algebra $\mathfrak{G}$ naturally containing the following:

\begin{enumerate}
\item the abelian Lie algebra $k[t_{1}^{\pm1},\ldots,t_{n}^{\pm1}]$,

\item Lie algebras of derivations, e.g. spanned by%
\[
t_{1}^{s_{1}}\cdots t_{n}^{s_{n}}\partial_{t_{i}},\qquad\text{(acting on
}k[t_{1}^{\pm1},\ldots,t_{n}^{\pm1}]\text{)}%
\]

\item for any finite-dimensional simple Lie algebra $\mathfrak{g}$ the
multiloop algebra%
\[
\mathfrak{g}[t_{1}^{\pm1},\ldots,t_{n}^{\pm1}]\text{.}%
\]

\end{enumerate}

The universal Lie algebra $\mathfrak{G}$ has a canonical Lie $(n+1)$-cocycle
$\phi\in H^{n+1}(\mathfrak{G},k)$. For $n=1$ this cocycle determines a central
extension\ (known as \emph{Tate's central extension})%
\[
0\longrightarrow k\longrightarrow\widehat{\mathfrak{G}}\longrightarrow
\mathfrak{G}\longrightarrow0
\]
and the pullback of it to one of the above types of subalgebras yields (respectively)

\begin{enumerate}
\item the Heisenberg algebra,

\item the Virasoro algebra,

\item the affine\ Lie algebra $\widehat{\mathfrak{g}}$ associated to
$\mathfrak{g}$.
\end{enumerate}
\end{theorem}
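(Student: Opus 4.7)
The plan is to realize $\mathfrak{G}$ concretely as a Lie algebra of ``continuous'' endomorphisms of an $n$-dimensional Tate vector space, in the spirit of Beilinson--Drinfeld--Tate. Take $V := k((t_1))\cdots((t_n))$ with its iterated Tate structure defined at each level by the lattice $k[[t_j]] \subset k((t_j))$, and let $\mathfrak{G}$ be the Lie algebra of $k$-linear operators on $V$ (or, to accommodate the multiloop case, on $V \otimes_k W$ for a finite-dimensional auxiliary $W$) that are bounded with respect to this iterated lattice filtration. Each of the three subalgebras embeds by a concrete formula: Laurent polynomials act by multiplication, the derivations $t_1^{s_1}\cdots t_n^{s_n}\partial_{t_i}$ act in the evident way, and for a simple $\mathfrak{g}$ one picks a faithful representation $\rho : \mathfrak{g} \to \operatorname{End}_k(W)$ and extends it coefficient-wise to get $\mathfrak{g}[t_1^{\pm 1},\ldots,t_n^{\pm 1}] \hookrightarrow \mathfrak{G}$.

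The heart of the proof is the construction of the canonical $(n+1)$-cocycle $\phi$. The iterated Tate structure on $V$ yields a chain of two-sided ideals $\mathfrak{G} = \mathfrak{G}_0 \supset \mathfrak{G}_1 \supset \cdots \supset \mathfrak{G}_n$ of operators with successively stronger finiteness properties, and I would define $\phi$ as the antisymmetrization of the higher ad\`ele residue symbol: for $(n+1)$ operators one iterates the Tate ``regularized trace'' trick $n+1$ times down this chain to produce a scalar. The cocycle identity $d_{\mathrm{CE}}\phi = 0$ then reduces, at each level of the filtration, to the classical fact that the trace of a commutator of a trace-class operator with a bounded one vanishes, combined with Jacobi; the key combinatorial input is that the signed sum over the $(n+2)$ positions in the Chevalley--Eilenberg differential telescopes down the filtration.

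For $n=1$ the construction specializes to Tate's central extension, and the three pullbacks are direct calculations. Restricting $\phi$ to multiplication operators by $f,g \in k[t^{\pm 1}]$ gives $\operatorname{Res}(f\,dg)$, the Heisenberg cocycle. Restricting to the Witt algebra of derivations gives, up to an explicit coboundary, the Virasoro cocycle. Restricting to the loop algebra via a faithful $\rho$ yields $\phi(X \otimes f, Y \otimes g) = \operatorname{tr}_W(\rho(X)\rho(Y)) \cdot \operatorname{Res}(f\,dg)$, which up to normalization is the cocycle defining $\widehat{\mathfrak{g}}$.

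The main obstacle is the construction of $\phi$ itself: making the higher ad\`ele residue symbol into a well-defined, antisymmetric, closed Lie cochain on all of $\mathfrak{G}$, independent of the auxiliary lattice splittings used to define it. This is where the full apparatus of Beilinson's multidimensional residue must be imported and reformulated on the Lie-theoretic side, rather than in its more familiar Hochschild/cyclic incarnation. Once $\phi$ is in hand, the embeddings (1)--(3) and the $n=1$ identifications reduce to explicit, classical computations.
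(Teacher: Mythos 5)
Your setup matches the paper's in outline: the paper takes $\mathfrak{G}:=E^{n}(k)$, an iterated infinite-matrix algebra acting on $k[t_{1}^{\pm},\ldots,t_{n}^{\pm}]$ (your ``bounded operators on an iterated Tate space'' is the Laurent-series variant of the same idea, which the paper explicitly leaves to the reader), and your three embeddings (multiplication operators, derivations as infinite matrices, adjoint/faithful representation extended coefficient-wise) are exactly the ones used. The $n=1$ identifications you describe are likewise the routine computations the paper invokes.

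The genuine gap is the construction of $\phi$ itself, and you concede it rather than close it. Two concrete problems. First, the structure you posit --- a single descending chain of two-sided ideals $\mathfrak{G}=\mathfrak{G}_{0}\supset\mathfrak{G}_{1}\supset\cdots\supset\mathfrak{G}_{n}$ down which one ``iterates the regularized trace'' --- is not the right datum. What the construction needs is, for each of the $n$ directions $i$, a \emph{pair} of ideals $I_{i}^{+},I_{i}^{-}$ with $I_{i}^{+}+I_{i}^{-}=\mathfrak{G}$, and a trace $\tau$ defined only on $I_{\operatorname{tr}}=\bigcap_{i}(I_{i}^{+}\cap I_{i}^{-})$ modulo commutators (the ``cubically decomposed algebra'' of Definition \ref{BT_DefCubicallyDecompAlgebra}); no single filtration chain encodes the $2^{n}$-fold splitting that makes the higher residue work. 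Second, for $n>1$ there is no known direct verification that an antisymmetrized iterated-trace cochain is well defined (independent of the splittings) and closed; your appeal to ``trace of a commutator vanishes plus telescoping'' is the $n=1$ argument and does not extend. The paper avoids this entirely by \emph{defining} $\phi=\operatorname{res}^{\ast}(1)$ homologically, via the cube complex $\left.^{\wedge}T_{\bullet}^{\bullet}\right.$, the spectral sequence of Lemma \ref{BT_PropExplicitDifferentialInSpecSeq}, its invertible differential $d_{n+1}$ and edge maps, so that $\phi\in H^{n+1}(\mathfrak{G},k)$ by construction and no cocycle identity ever needs checking; an explicit formula is only recovered afterwards, on cycles lifting under $I$, by means of a contracting homotopy built from the good idempotents $P_{i}^{\pm}$ (Proposition \ref{BT_KeyLemmaFormula}, Theorem \ref{BT_MainThmInner}). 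Since you yourself flag this step as ``the main obstacle'' to be handled by importing Beilinson's apparatus, your text is a plan for the proof rather than a proof: the central object of the theorem is never actually constructed.
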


This will be stated in more detail and proven in
\S \ref{section_ConcreteFormalism}. It is not at all surprising that some Lie
algebras can be embedded into larger ones. The interesting fact is that there
is such a Lie algebra which carries a canonical cocycle, inducing the ones
defining all these classical central extensions. For $n=1$ the above is
well-known, see for example \cite[\S 2.1]{BFM_Conformal}. For $n=1,2$ see
\cite{Frenkel08092011}. In the language of the latter, $\mathfrak{G}$ is an
example of a \textquotedblleft master Lie algebra\textquotedblright%
.\medskip\newline We are interested in the nature of $\phi$ for $n>1$ -- even
if such cocycles cannot be interpreted as a central extension anymore (we get
crossed modules, etc.). Indeed, they are meaningful, as we shall see.\medskip

A key point of this text is the actual computation of $\phi$ (with a slight limitation):

\begin{theorem}
\label{intro_Thm2CocycleFormula}The cocycle $\phi\in H^{n+1}(\mathfrak{G},k)$
is given explicitly by%
\begin{align*}
&  \phi(f_{0}\wedge f_{1}\wedge\ldots\wedge f_{n})\\
&  =\operatorname*{tr}%
%TCIMACRO{\tsum _{\pi\in\mathfrak{S}_{n}}}%
%BeginExpansion
{\textstyle\sum_{\pi\in\mathfrak{S}_{n}}}
%EndExpansion
\operatorname*{sgn}(\pi)%
%TCIMACRO{\tsum _{\gamma_{1}\ldots\gamma_{n}\in\{\pm\}}}%
%BeginExpansion
{\textstyle\sum_{\gamma_{1}\ldots\gamma_{n}\in\{\pm\}}}
%EndExpansion
\left(  -1\right)  ^{\gamma_{1}+\cdots+\gamma_{n}}(P_{1}^{-\gamma_{1}%
}\operatorname*{ad}(f_{\pi(1)})P_{1}^{\gamma_{1}})\\
&  \qquad\qquad\cdots(P_{n}^{-\gamma_{n}}\operatorname*{ad}(f_{\pi(n)}%
)P_{n}^{\gamma_{n}})f_{0}\text{,}%
\end{align*}
whenever $f_{0}\otimes f_{1}\wedge\ldots\wedge f_{n}$ is already a
$\mathfrak{g}$-valued Lie cycle. The $P_{1}^{+},\ldots,P_{n}^{+}$ refer to
certain commuting idempotents (see \S \ref{section_CubeComplex} for details).
\end{theorem}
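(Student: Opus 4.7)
The plan is to reduce the stated formula to the definition of $\phi$ in terms of the $n$-cube complex of commuting idempotents $P_1^+, \ldots, P_n^+$ that will be introduced in \S\ref{section_CubeComplex}. By construction, $\phi$ is a ``higher trace'' in the spirit of Tate: one builds from $(f_0, \ldots, f_n)$ an operator which is a priori merely densely defined, but whose restriction to an appropriate ``diagonal corner'' of the cube becomes trace-class precisely when the input comes from a Lie cycle. My goal is to unfold this abstract construction step by step, one filtration at a time, until it matches the explicit expression.

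I would first dispatch the case $n = 1$ as a sanity check: in Tate's original set-up one writes $1 = P^+ + P^-$ and verifies that $P^+[f_0, f_1]P^+$ differs from $[P^+ f_0 P^+, P^+ f_1 P^+]$ by a trace-class off-diagonal correction; taking the trace and rewriting in commutator form yields exactly $\sum_{\gamma \in \{\pm\}} (-1)^{\gamma} \operatorname{tr}\bigl((P_1^{-\gamma}\operatorname{ad}(f_1)P_1^{\gamma})f_0\bigr)$, which matches the theorem for $n=1$. For general $n$, the same manipulation must be executed once per direction, producing one factor of the form $P_i^{-\gamma_i}\operatorname{ad}(f_{\pi(i)})P_i^{\gamma_i}$ per commuting idempotent, and the signed sum over the $2^n$ vertices $\gamma \in \{\pm\}^n$ as a natural by-product of the cube's alternating differentials. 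The outer antisymmetrization over $\pi \in \mathfrak{S}_n$ is simply the standard passage from a multilinear expression to an alternating $n$-cochain; it is compatible with the cube because the $P_i^\pm$ pairwise commute, so the order in which the directions are resolved is immaterial up to sign.

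The principal obstacle is verifying that the operator under the trace is actually trace-class. For a generic tensor $f_0 \otimes f_1 \wedge \ldots \wedge f_n$ it is not: each factor $P_i^{-\gamma_i}\operatorname{ad}(f)P_i^{\gamma_i}$ lies in the ideal of operators supported on a ``half-space'' with respect to the $i$-th filtration, but the composition of $n$ such operators is still only ``corner-supported'', not yet finite-rank modulo smaller ideals. The Lie cycle hypothesis is precisely what forces the additional cancellation: the ``diagonal'' correction terms $P_i^+\operatorname{ad}(f)P_i^+ + P_i^-\operatorname{ad}(f)P_i^-$ that would spoil trace-classness must drop out of the alternating sum, and this dropping-out is exactly equivalent to the Jacobi-type relation encoded by the cycle condition. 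I would verify this cancellation inductively on $n$, peeling off one direction at a time and using the Lie cycle property to close up each step; once trace-classness is established, a direct manipulation of the resulting finite sum confirms both that it defines a Lie $(n+1)$-cocycle and that it agrees with the abstract $\phi$, using that the cube differentials square to zero.
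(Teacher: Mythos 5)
There is a genuine gap, and it sits at the two places where your outline replaces the actual content of the argument by an assertion. First, the mechanism you propose for the role of the cycle hypothesis is wrong in this setting. Nothing needs to be ``trace-class by cancellation'': each summand $(P_{1}^{-\gamma_{1}}\operatorname{ad}(f_{\pi(1)})P_{1}^{\gamma_{1}})\cdots(P_{n}^{-\gamma_{n}}\operatorname{ad}(f_{\pi(n)})P_{n}^{\gamma_{n}})f_{0}$ automatically lies in the domain of the trace, because $P_{i}^{+}A\subseteq I_{i}^{+}$, $P_{i}^{-}A\subseteq I_{i}^{-}$ and the $I_{i}^{\pm}$ are two-sided ideals, so each factor $P_{i}^{-\gamma_{i}}(\cdot)P_{i}^{\gamma_{i}}$ lands in $I_{i}^{0}=I_{i}^{+}\cap I_{i}^{-}$ and the product lands in $I_{\operatorname{tr}}=I_{1}^{0}\cap\cdots\cap I_{n}^{0}$, term by term, with no use of the cycle condition. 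The hypothesis that $f_{0}\otimes f_{1}\wedge\ldots\wedge f_{n}$ is a $\mathfrak{g}$-valued Lie cycle enters for an entirely different reason: $\phi$ is pinned down on Lie cycles by the Universal Coefficient Theorem, and the explicit formula computes the map $H_{n}(\mathfrak{g},\mathfrak{g})\rightarrow k$ obtained from the auxiliary complexes $C(-)_{\bullet}$; one then needs the class in $H_{n+1}(\mathfrak{g},k)$ to lift along $I:H_{n}(\mathfrak{g},\mathfrak{g})\rightarrow H_{n+1}(\mathfrak{g},k)$ and a compatibility of the two spectral sequences to transport the answer. Your ``diagonal correction terms drop out by the cycle condition'' step would not materialize as stated.

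Second, and more seriously, the theorem presupposes the abstract definition of $\phi$ as $\operatorname{res}^{\ast}(1)$, i.e.\ the class produced by Beilinson's construction from the cube of ideals via the differential $d_{n+1}$ of a spectral sequence converging to zero. A proof must show that \emph{this} class is computed by the displayed expression; your plan instead verifies (for $n=1$) that the expression reproduces Tate's formula, asserts that ``the same manipulation executed once per direction'' yields the general case, and then simply states that the finite sum ``agrees with the abstract $\phi$''. That agreement is exactly the hard part. The actual argument requires inverting $d_{n+1}$ explicitly by means of the contracting homotopy $H$ built from the idempotents (this is what the commuting $P_{i}^{\pm}$ are for), and then performing the inductive staircase computation through the bicomplex: at each of the $n$ steps one applies a Chevalley--Eilenberg differential and then $H$, keeps track of the signs that produce $\operatorname{sgn}(\pi)$ and $(-1)^{\gamma_{1}+\cdots+\gamma_{n}}$, and shows that the contribution of the bracket part $\delta^{[2]}$ of the differential dies because $P_{i}^{+}P_{i}^{-}=0$ (equivalently $H^{2}=0$). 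One must also address why the result is independent of the choice of idempotents, which follows because the abstract construction never refers to them. None of these steps is supplied or even localized in your proposal, so as it stands it is a plausible heuristic for the shape of the formula rather than a proof.
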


The proof and details regarding the $P_{i}^{\pm}$ can be found in
\S \ref{section_ConcreteFormalism}. Effectively, we compute the composition%
\begin{equation}
H_{n}(\mathfrak{g},\mathfrak{g})\overset{I}{\longrightarrow}H_{n+1}%
(\mathfrak{g},k)\longrightarrow k\text{,} \label{lBT_revIIntro}%
\end{equation}
with $I$ a natural map to be explained in \S \ref{section_CEComplexes}. By the
Universal Coefficient Theorem for Lie algebras, $H^{n+1}(\mathfrak{g},k)\cong
H_{n+1}(\mathfrak{g},k)^{\ast}$, referring to the dual space. As such,
although $\phi$ is well-defined, the formula only applies to those cycles
admitting a lift under $I$ (as soon as it exists, the choice does not matter).
The formula is rather complicated. However, the pullback to particular
subalgebras of $\mathfrak{G}$ can be much nicer, for example for multiloop Lie
algebras of simple Lie algebras, we get the following:

\begin{theorem}
\label{intro_Thm3CocycleForLieAlgebras}Suppose $\mathfrak{g}/k$ is a
finite-dimensional centreless Lie algebra (e.g. simple). For $Y_{0}%
,\ldots,Y_{n}\in\mathfrak{g}$ we call%
\[
B(Y_{0},\ldots,Y_{n}):=\operatorname*{tr}\nolimits_{\operatorname*{End}%
_{k}(\mathfrak{g})}(\operatorname*{ad}(Y_{0})\operatorname*{ad}(Y_{1}%
)\cdots\operatorname*{ad}(Y_{n}))
\]
the `generalized Killing form'. Then on all Lie cycles admitting a lift under
$I$ as in eq. \ref{lBT_revIIntro}, the pullback of $\phi$ to $\mathfrak{g}%
[t_{1}^{\pm},\ldots,t_{n}^{\pm}]$ is explicitly given by%
\begin{align*}
&  \left.  \phi(Y_{0}t_{1}^{c_{0,1}}\cdots t_{n}^{c_{0,n}}\wedge\cdots\wedge
Y_{n}t_{1}^{c_{n,1}}\cdots t_{n}^{c_{n,n}})\right.  =\\
&  \qquad\left(  -1\right)  ^{n}\sum_{\pi\in\mathfrak{S}_{n}}%
\operatorname*{sgn}(\pi)B(Y_{\pi(1)},\ldots,Y_{\pi(n)},Y_{0})\prod
\limits_{i=1}^{n}c_{\pi(i),i}%
\end{align*}
whenever $\forall i\in\{1,\ldots,n\}:\sum_{p=0}^{n}c_{p,i}=0$ and zero
otherwise. Here $c_{i,p}\in\mathbf{Z}$ for all $i=0,\ldots,n$ and
$p=1,\ldots,n$.
\end{theorem}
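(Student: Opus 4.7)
The plan is to substitute $f_{j}=Y_{j}t_{1}^{c_{j,1}}\cdots t_{n}^{c_{j,n}}$ into the explicit formula of Theorem~\ref{intro_Thm2CocycleFormula} and to use the tensor-product structure $\mathfrak{g}[t_{1}^{\pm},\ldots,t_{n}^{\pm}]\cong\mathfrak{g}\otimes_{k}k[t_{1}^{\pm},\ldots,t_{n}^{\pm}]$ in order to decouple the $\mathfrak{g}$-factor from the Laurent-polynomial factor.

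On the ambient module underlying the construction of $\mathfrak{G}$ in \S\ref{section_CubeComplex}, the operator $\operatorname{ad}(Yt^{\vec{c}})$ acts as $\operatorname{ad}(Y)\otimes M_{t^{\vec{c}}}$, with $M_{t^{\vec{c}}}$ multiplication by $t^{\vec{c}}$ on the Laurent-polynomial factor. The idempotents $P_{i}^{\pm}$ act only in the $t_{i}$-direction and commute with every $\operatorname{ad}(Y)$ and with every $M_{t_{j}^{c}}$ for $j\ne i$. Hence in each summand of Theorem~\ref{intro_Thm2CocycleFormula}, indexed by $\pi\in\mathfrak{S}_{n}$ and $\vec{\gamma}\in\{\pm\}^{n}$, the operator factorises as
\[
\bigl(\operatorname{ad}(Y_{\pi(1)})\cdots\operatorname{ad}(Y_{\pi(n)})\bigr)\;\otimes\;\prod_{i=1}^{n}\bigl(P_{i}^{-\gamma_{i}}M_{t_{i}^{c_{\pi(i),i}}}P_{i}^{\gamma_{i}}\bigr).
\]

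Applied to $f_{0}=Y_{0}t^{\vec{c}_{0}}$ and summed over $\vec{\gamma}$, the Laurent factor in direction $i$ collapses to a classical Tate commutator
\[
\sum_{\gamma_{i}\in\{\pm\}}(-1)^{\gamma_{i}}P_{i}^{-\gamma_{i}}M_{t_{i}^{c_{\pi(i),i}}}P_{i}^{\gamma_{i}}\;=\;-\bigl[P_{i}^{+},\,M_{t_{i}^{c_{\pi(i),i}}}\bigr],
\]
a finite-rank operator on $k[t_{i}^{\pm}]$. Composed with the remaining $M_{t_{i}^{c_{p,i}}}$'s and restricted to the trace on $k[t_{i}^{\pm}]$, it yields $c_{\pi(i),i}$ exactly when the total $t_{i}$-degree $\sum_{p=0}^{n}c_{p,i}$ vanishes, and zero otherwise (in the unbalanced case the composed operator shifts the grading, hence is strictly upper or lower triangular and traceless). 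Taking the product over $i=1,\ldots,n$ yields the combinatorial factor $\prod_{i=1}^{n}c_{\pi(i),i}$ together with the stated balance condition. On the $\mathfrak{g}$-side the factor $\operatorname{ad}(Y_{\pi(1)})\cdots\operatorname{ad}(Y_{\pi(n)})\operatorname{ad}(Y_{0})$ survives: centrelessness guarantees that $\operatorname{ad}\colon\mathfrak{g}\hookrightarrow\operatorname{End}_{k}(\mathfrak{g})$ is injective, so that the corresponding trace is
\[
\operatorname{tr}_{\operatorname{End}_{k}(\mathfrak{g})}\bigl(\operatorname{ad}(Y_{\pi(1)})\cdots\operatorname{ad}(Y_{\pi(n)})\operatorname{ad}(Y_{0})\bigr)\;=\;B(Y_{\pi(1)},\ldots,Y_{\pi(n)},Y_{0}).
\]
Assembling the two factors, keeping $\operatorname{sgn}(\pi)$ from the outer sum, and collecting the $n$ signs from the residue commutators into the global $(-1)^{n}$ produces the formula claimed.

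I expect the main obstacle to be the sign and ordering bookkeeping: the alternating sum over $\vec{\gamma}$, the $n$ residue-commutator signs, and the permutation sign must combine into the single overall $(-1)^{n}$ without corrupting $\operatorname{sgn}(\pi)$, and the cyclic position of $\operatorname{ad}(Y_{0})$ inside the Killing trace must match the Theorem~\ref{intro_Thm2CocycleFormula} placement of $f_{0}$. A secondary technical point is to justify cleanly that the ambient trace factorises as $\operatorname{tr}(A\otimes B)=\operatorname{tr}(A)\cdot\operatorname{tr}(B)$ on operators of the relevant Tate-admissible type, and that failure of the balance $\sum_{p}c_{p,i}=0$ makes the $i$-th factor traceless on $k[t_{i}^{\pm}]$; both are routine once the definitions of $P_{i}^{\pm}$ from \S\ref{section_CubeComplex} are in hand.
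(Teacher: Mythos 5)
Your proposal is correct and follows essentially the paper's own route: you substitute into the explicit cocycle formula of Theorem~\ref{intro_Thm2CocycleFormula} (i.e.\ Theorem~\ref{BT_MainThmInner}, using $P_i^{-\gamma_i}\operatorname{ad}(f)P_i^{\gamma_i}=P_i^{-\gamma_i}fP_i^{\gamma_i}$ as in eq.~\ref{lBTA_33}) and evaluate the trace direction by direction on the monomial basis with the degree-truncation idempotents of eq.~\ref{lBTA_32}, exactly as in \S\ref{BT_sect_applications_multiloop}. Your tensor-factorized trace $\operatorname{tr}(A\otimes B)=\operatorname{tr}(A)\operatorname{tr}(B)$, with each one-variable factor reduced to a Tate commutator $-[P_i^{+},M]$, is just a repackaging of the paper's diagonal eigenvalue/lattice-point count, so the two arguments coincide in substance.
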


If $\mathfrak{g}$ is finite-dimensional simple and $n=1$, then the class
$\phi$ yields the universal central extension of the loop Lie algebra
$\mathfrak{g}[t_{1},t_{1}^{-1}]$, the associated affine Lie algebra
$\widehat{\mathfrak{g}}$ (without extending by a derivation),%
\[
0\longrightarrow k\longrightarrow\widehat{\mathfrak{g}}\longrightarrow
\mathfrak{g}[t_{1},t_{1}^{-1}]\longrightarrow0\text{.}%
\]
In this case $B$ is obviously just the ordinary Killing form of $\mathfrak{g}
$. The above theorem will be proven in \S \ref{BT_sect_applications_multiloop}%
.\medskip

Additionally, we should say that these computations have an application
outside the theory of Lie algebras.\newline For this we need to return to the
roots of the subject. In 1967 J. Tate \cite{MR0227171} showed that the residue
of a rational $1$-form $f\mathrm{d}g$ at a closed point $x$ on an algebraic
curve $X/k$ can be expressed as a certain operator-theoretic trace on an
infinite-dimensional space. Arbarello, de Concini and Kac \cite[eq.
(2.7)]{MR1013132} reformulated this as
\begin{equation}
\operatorname*{res}\nolimits_{x}f\mathrm{d}g=\operatorname*{tr}([\pi
,g]f)\text{.} \label{lBTA_10}%
\end{equation}
On the right-hand side the functions $f,g$ are to be read as multiplication
operators acting on the local field $\operatorname*{Frac}\widehat{\mathcal{O}%
}_{X,x}\simeq\kappa(x)((t_{1}))$, seen as a $\kappa(x)$-vector space, and
$\pi$ denotes some projector on the non-principal part, e.g. \textquotedblleft
we cut off the principal part of the Laurent series\textquotedblright. It is
natural to ask whether there exists a generalization of this formula to higher
residues. We can give such a formula; it will be proven in
\S \ref{BT_sect_applications_residue}:

\begin{theorem}
\label{Prop_MainResidueThm}For a multiple Laurent polynomial ring with residue
field $k$, say%
\[
R:=k[t_{1}^{\pm},\ldots,t_{n}^{\pm}]\text{,}%
\]
and $f_{0},\ldots,f_{n}\in R$ we have%
\begin{align*}
&  \operatorname*{res}\nolimits_{t_{1}}\cdots\operatorname*{res}%
\nolimits_{t_{n}}f_{0}\mathrm{d}f_{1}\ldots\mathrm{d}f_{n}\\
&  =\left(  -1\right)  ^{n}\operatorname*{tr}%
%TCIMACRO{\tsum _{\pi\in\mathfrak{S}_{n}}}%
%BeginExpansion
{\textstyle\sum_{\pi\in\mathfrak{S}_{n}}}
%EndExpansion
\operatorname*{sgn}(\pi)%
%TCIMACRO{\tsum _{\gamma_{1}\ldots\gamma_{n}\in\{\pm\}}}%
%BeginExpansion
{\textstyle\sum_{\gamma_{1}\ldots\gamma_{n}\in\{\pm\}}}
%EndExpansion
\left(  -1\right)  ^{\gamma_{1}+\cdots+\gamma_{n}}(P_{1}^{-\gamma_{1}%
}\operatorname*{ad}(f_{\pi(1)})P_{1}^{\gamma_{1}})\\
&  \qquad\qquad\qquad\cdots(P_{n}^{-\gamma_{n}}\operatorname*{ad}(f_{\pi
(n)})P_{n}^{\gamma_{n}})f_{0}\text{,}%
\end{align*}
where $P_{1}^{\pm},\ldots,P_{n}^{\pm}$ are suitable projectors (explained in
\S \ref{BT_sect_applications_residue}, eq. \ref{lBTA_32}).

\begin{enumerate}
\item \label{resthm_part1}For $n=1$ and $\pi:=P_{1}^{+}$ the formula reduces
to the familiar eq. \ref{lBTA_10}, as in \cite{MR1013132}.

\item \label{resthm_part2}If we have $f_{i}=t_{1}^{c_{i,1}}\cdots
t_{n}^{c_{i,n}}$ for $i=0,\ldots,n$, the formula reduces to%
\[
\operatorname*{res}f_{0}\mathrm{d}f_{1}\ldots\mathrm{d}f_{n}=\det%
\begin{pmatrix}
c_{1,1} & \cdots & c_{n,1}\\
\vdots & \ddots & \vdots\\
c_{1,n} & \cdots & c_{n,n}%
\end{pmatrix}
\qquad\text{if }\forall i:\sum_{p=0}^{n}c_{p,i}=0
\]
and the residue is zero if the condition on the right-hand side is not satisfied.

\item \label{resthm_part3}For $n=1$ and $f_{1}=t_{1}$ this reduces by
linearity to the classical definition%
\[
\operatorname*{res}\alpha t_{1}^{c_{1}}\mathrm{d}t_{1}=\left\{
\begin{array}
[c]{cl}%
\alpha & \text{if }c_{1}=-1\\
0 & \text{if }c_{1}\neq-1\text{.}%
\end{array}
\right.
\]

\end{enumerate}
\end{theorem}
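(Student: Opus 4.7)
The plan is to identify the right-hand side as the pullback of the universal cocycle $\phi$ from Theorem \ref{intro_Thm2CocycleFormula} to the abelian subalgebra $R := k[t_{1}^{\pm},\ldots,t_{n}^{\pm}]\hookrightarrow \mathfrak{G}$ provided by Theorem \ref{intro_Thm1UniversalTateCocycle}(1). Because $R$ is abelian, the Chevalley--Eilenberg differential on $\bigwedge^{\bullet} R$ is identically zero, so every antisymmetric tensor $f_{0}\wedge f_{1}\wedge\ldots\wedge f_{n}$ is automatically a Lie cycle and admits a lift under the map $I$ of equation \eqref{lBT_revIIntro}. Theorem \ref{intro_Thm2CocycleFormula} then applies without restriction, and the statement reduces to the single identity
\[
\operatorname{res}\nolimits_{t_{1}}\cdots\operatorname{res}\nolimits_{t_{n}} f_{0}\,\mathrm{d}f_{1}\cdots\mathrm{d}f_{n} \;=\; (-1)^{n}\,\phi|_{R}(f_{0}\wedge\ldots\wedge f_{n})\text{.}
\]

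Both sides of this identity are $k$-multilinear in $f_{0},\ldots,f_{n}$, so I would reduce to monomials $f_{i}=t_{1}^{c_{i,1}}\cdots t_{n}^{c_{i,n}}$. On the left, expanding $\mathrm{d}f_{i}=\sum_{j} c_{i,j}\,t_{1}^{c_{i,1}}\cdots t_{j}^{c_{i,j}-1}\cdots t_{n}^{c_{i,n}}\,\mathrm{d}t_{j}$ and extracting the coefficient of $\mathrm{d}t_{1}\wedge\ldots\wedge\mathrm{d}t_{n}$ in $\mathrm{d}f_{1}\wedge\ldots\wedge\mathrm{d}f_{n}$ produces $\det(c_{i,j})$ times $t_{1}^{S_{1}-1}\cdots t_{n}^{S_{n}-1}$ with $S_{j}:=\sum_{p=1}^{n}c_{p,j}$; multiplication by $f_{0}$ and the iterated residue then forces $\sum_{p=0}^{n}c_{p,j}=0$ for every $j$, recovering the determinant formula of part (\ref{resthm_part2}) and fixing the target for the right-hand side.

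For the right-hand side on monomials, I would exploit two structural facts about the projectors $P_{i}^{\pm}$ of \S\ref{BT_sect_applications_residue}: they commute pairwise, and the multiplication operator $M_{t_{j}^{c}}$ commutes with $P_{i}^{\pm}$ whenever $i\neq j$, while $[M_{t_{i}^{c}},P_{i}^{+}]$ is a finite-rank operator whose trace encodes the one-variable Tate residue --- this is precisely the content underlying part (\ref{resthm_part1}). Writing $M_{f_{i}}=\prod_{j}M_{t_{j}^{c_{i,j}}}$ and applying the one-dimensional collapse $-[f,P^{+}]=\sum_{\gamma}(-1)^{\gamma}(P^{-\gamma}M_{f}P^{\gamma})$ separately in each slot, the nested product $\prod_{i}(P_{i}^{-\gamma_{i}}\operatorname{ad}(f_{\pi(i)})P_{i}^{\gamma_{i}})$ should factor across the $n$ variables: in slot $i$ only the factor $M_{t_{i}^{c_{\pi(i),i}}}$ contributes nontrivially, because every other $M_{t_{j}^{c_{\pi(i),j)}}}$ commutes with $P_{i}^{\pm}$. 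Under the global trace each slot then reduces to a one-variable Tate computation yielding $c_{\pi(i),i}$ and enforcing $\sum_{p}c_{p,i}=0$; summing over $\pi\in\mathfrak{S}_{n}$ weighted by $\operatorname{sgn}(\pi)$ reproduces $\det(c_{i,j})$, matching the left-hand side. Parts (\ref{resthm_part1}) and (\ref{resthm_part3}) then follow by specialization: the former collapses the $\gamma_{1}$-sum using $P_{1}^{-}=1-P_{1}^{+}$ to recover $\operatorname{tr}([\pi,g]f)$, and the latter substitutes $f_{1}=t_{1}$ into it.

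The most delicate step will be the factorization across slots: a priori each $\operatorname{ad}(f_{\pi(i)})$ contains multiplications in \emph{all} variables, so the clean separation into $n$ independent one-dimensional Tate traces hinges on carefully tracking which commutators $[M_{t_{j}^{c}},P_{i}^{+}]$ vanish and verifying that the $\sum_{\gamma_{i}}(-1)^{\gamma_{i}}$ combinations produce exactly the ``sandwich'' $P_{i}^{\mp}(\cdot)P_{i}^{\pm}$ structure that makes each contribution trace-class against $f_{0}$. Getting the overall prefactor $(-1)^{n}$ correct --- which is what arranges for a $+\det$ rather than $-\det$ in part (\ref{resthm_part2}) --- will require close bookkeeping of the alternating signs already visible in the $n=1$ comparison with Tate's formula; this is where I expect the main work of the proof to lie.
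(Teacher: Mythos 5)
Your proposal is correct in substance, and its computational core is the same as the paper's, but you organize it differently. The paper obtains the displayed identity by combining the abstract trace formula of Thm. \ref{BT_MainThmInner} with Thm. \ref{BT_PropDetFormulaForResidue}: there one fixes the projectors of eq. \ref{lBTA_32}, computes the action of $M_{\pi}$ on monomials by an inductive subclaim (eq. \ref{lBT_37}), and evaluates $\tau M_{\pi}$ by counting the $\pm1$ eigenvalues of a diagonal finite-rank operator, each variable contributing $-c_{w_{i},i}$ subject to $\sum_{p=0}^{n}c_{p,i}=0$; parts (\ref{resthm_part1}) and (\ref{resthm_part3}) are then handled exactly as you do, via $P^{-}=\mathbf{1}-P^{+}$ and eq. \ref{lBTA_33}. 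You instead treat the identity as a self-contained multilinear statement, verify it on monomials, and evaluate the trace by factoring $M_{\pi}$ across the variables: because the projectors of eq. \ref{lBTA_32} act variable-wise and commute with multiplications in the other variables, $M_{\pi}$ restricted to $k[\mathbf{t}_{1}^{\pm},\ldots,\mathbf{t}_{n}^{\pm}]$ is a tensor product of $n$ one-variable finite-rank operators, so the trace is the product of $n$ one-variable Tate traces. This is a legitimate reorganization of the same count and is arguably more transparent; what it costs you is the justification that $\tau$ on these operators is the naive trace on the stable subspace $k[\mathbf{t}_{1}^{\pm},\ldots,\mathbf{t}_{n}^{\pm}]$ and that the trace of the tensor product is the product of the traces (the paper's route avoids this by computing the diagonal action directly). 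Two minor points: the one-variable trace in slot $i$ is $-c_{\pi(i),i}$ rather than $+c_{\pi(i),i}$ --- this is precisely where the prefactor $(-1)^{n}$ is absorbed, so your anticipated sign bookkeeping closes cleanly; and your opening identification of the right-hand side with $i^{\ast}\phi$ via Thm. \ref{intro_Thm2CocycleFormula} is not actually needed for this theorem, since once the projectors of eq. \ref{lBTA_32} are fixed the right-hand side is an explicit trace --- that identification is rather the content of Thm. \ref{intro_Thm5CocycleAgreesWithBeilinsons}.
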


How to construct the cocycle $\phi$?\medskip

There are various ways to approach this construction. Frenkel and Zhu
\cite{Frenkel08092011} use distinguished generators of the cohomology ring of
infinite matrix algebras, based on computations of Feigin and Tsygan
\cite{MR705056}. This is a very natural approach. However, in this text we use
a different approach based on Beilinson's multidimensional adelic residue
\cite{MR565095}. Originally, this approach was only used to generalize Tate's
approach to the residue symbol to several variables, but it readily
generalizes to the problem we are discussing here. This might be interesting
also since \cite{MR565095} does not give an explicit formula -- and it is not
totally trivial to extrapolate a formula from the definition:

\begin{theorem}
\label{intro_Thm5CocycleAgreesWithBeilinsons}The formula in Thm.
\ref{Prop_MainResidueThm} arises from the construction of Beilinson in the
paper \cite[Lemma 1]{MR565095}, i.e. it is the composition%
\begin{align}
&  \Omega_{R/k}^{n}\overset{\left(  -1\right)  ^{n}\varkappa}{\longrightarrow
}H_{n+1}^{\operatorname*{Lie}}(\mathfrak{G},k)\overset{\rho_{2}}%
{\longrightarrow}\left.  ^{\wedge}E_{0,n+1}^{n+1}\right. \label{lBTAA_3}\\
&  \qquad\qquad\overset{(d_{n+1})^{-1}}{\longrightarrow}\left.  ^{\wedge
}E_{n+1,1}^{n+1}\right.  \overset{\rho_{1}}{\longrightarrow}H_{0}%
^{\operatorname*{Lie}}(\mathfrak{G},N^{n+1})\overset{\operatorname*{tr}%
}{\longrightarrow}k\text{,}\nonumber
\end{align}
where

\begin{itemize}
\item $\varkappa:f_{0}\mathrm{d}f_{1}\wedge\cdots\wedge\mathrm{d}f_{n}\mapsto
f_{0}\wedge\cdots\wedge f_{n}$,

\item $N^{n+1}$ is a certain $\mathfrak{G}$-module (see
\S \ref{section_CubeComplex} for the definition, or $T_{\ast N}$ in
\cite{MR565095}), and

\item $\rho_{1},\rho_{2}$ are edge maps and $d_{n+1}$ a differential on the
$(n+1)^{\text{th}}$ page of a certain spectral sequence $\left.  ^{\wedge
}E_{\bullet,\bullet}^{\bullet}\right.  $ (constructed in Lemma
\ref{BT_PropExplicitDifferentialInSpecSeq}, or see \cite[Lemma 1]%
{MR565095}).\smallskip
\end{itemize}
\end{theorem}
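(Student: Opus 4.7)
The plan is to chase an input $\omega = f_0\,\mathrm{d}f_1 \wedge \cdots \wedge \mathrm{d}f_n \in \Omega^n_{R/k}$ through the composition \ref{lBTAA_3} arrow by arrow, and to recognize the outcome as the operator-theoretic trace formula on the right-hand side of Theorem \ref{Prop_MainResidueThm}. This is a pure \emph{comparison} statement: the explicit formula itself is already established as a special case of Theorem \ref{intro_Thm2CocycleFormula} applied to the abelian subalgebra $R \subset \mathfrak{G}$; what has to be shown here is that the abstract composition arising from Beilinson's residue construction \cite{MR565095} literally equals that formula.

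The edge-type arrows $\varkappa$, $\rho_{2}$, $\rho_{1}$ and the terminal trace are essentially tautological identifications. The map $\varkappa$ sends $f_{0}\,\mathrm{d}f_{1}\wedge\cdots\wedge\mathrm{d}f_{n}$ to $f_{0}\wedge f_{1}\wedge\cdots\wedge f_{n}$; this is well-defined on $\Omega^{n}_{R/k}$ because the $f_{i}$ commute in $\mathfrak{G}$ (they lie in the abelian subalgebra $R$), so that the Chevalley--Eilenberg differential automatically vanishes on wedges of elements of $R$, and the Leibniz rule on $\Omega^{n}_{R/k}$ is reproduced by the anti-symmetry of the wedge. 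The edge maps $\rho_{2}$, $\rho_{1}$ of the spectral sequence ${}^{\wedge}E^{\bullet}_{\bullet,\bullet}$ merely represent a class by its entry at the appropriate filtration level: $\rho_{2}$ places $f_{0}\wedge\cdots\wedge f_{n}$ into bidegree $(0,n+1)$ and $\rho_{1}$ reads off the entry in bidegree $(n+1,1)$ as a class of $\mathfrak{G}$-coinvariants valued in $N^{n+1}$, after which the trace is applied termwise.

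The core step is the inversion of the $(n+1)$-st page differential $d_{n+1}$, and this is where Lemma \ref{BT_PropExplicitDifferentialInSpecSeq} does the heavy lifting. That lemma expresses $d_{n+1}$ as a zig-zag of length $n+1$ through the cube bicomplex: each of the $n$ cube-directions contributes an idempotent $P_{i}^{+}$ or its complement $P_{i}^{-}$, together with a commutator $\operatorname{ad}(f_{\bullet})$ arising from the Chevalley--Eilenberg differential against an element of $R$. Inverting the zig-zag amounts to solving a sequence of lifting equations whose solutions are precisely the conjugated operators $P_{i}^{-\gamma_{i}}\operatorname{ad}(f_{\pi(i)})P_{i}^{\gamma_{i}}$; the $2^{n}$-fold sum over $\gamma_{1},\ldots,\gamma_{n}\in\{\pm\}$ with sign $(-1)^{\gamma_{1}+\cdots+\gamma_{n}}$ is the alternating sum over vertices of the $n$-cube, the sum over $\pi\in\mathfrak{S}_{n}$ with sign $\operatorname{sgn}(\pi)$ accounts for all orderings in which the $n$ directions can be processed, and the outer $(-1)^{n}$ cancels against the sign of $\varkappa$ built into the composition.

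The main obstacle is careful sign bookkeeping and, more substantively, the verification that the zig-zag is unobstructed at each stage, i.e.\ that each intermediate lift exists. This follows because $f_{0}\wedge\cdots\wedge f_{n}$ is an honest Chevalley--Eilenberg cycle of $\mathfrak{G}$ with trivial coefficients (not merely a cycle modulo the filtration), so all page-level obstructions in the spectral sequence vanish for free and the zig-zag runs to completion. Once this is granted and Lemma \ref{BT_PropExplicitDifferentialInSpecSeq} is invoked, the match with Theorem \ref{Prop_MainResidueThm} is a direct combinatorial unpacking.
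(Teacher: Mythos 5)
There is a genuine gap at what you call the core step. You propose to invert $d_{n+1}$ directly in the spectral sequence $\left.^{\wedge}E_{\bullet,\bullet}^{\bullet}\right.$ by invoking Lemma \ref{BT_PropExplicitDifferentialInSpecSeq}, but the only part of that lemma which produces an explicit inverse, namely part \ref{lemma_specseq_homotopypart}, requires a contracting homotopy of the underlying complex $S^{\bullet}$ -- here $\left.^{\wedge}T_{\bullet}^{\bullet}\right.$ -- and no such homotopy is available: the idempotent-built homotopy $H$ of Lemma \ref{BT_Prop_EstablishKeyCubeComplex} lives on $N^{\bullet}$, hence on the auxiliary complex $\left.^{\otimes}T_{\bullet}^{\bullet}\right.=C(N^{\bullet})_{\bullet}$, not on $\left.^{\wedge}T_{\bullet}^{\bullet}\right.$ (indeed the operators $P_{i}^{\gamma}$ do not even act in a well-defined way on ``the first wedge factor'' of an element of $CE(I_{i}^{s_{i}})_{\bullet}$, which is precisely why the $\otimes$-variant is introduced). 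The whole point of \S\ref{section_CubeComplexII} is that one cannot run your chase in the $\wedge$-picture; instead one passes through the map $I$ and the compatibility Lemma \ref{BTrev_ComputeWedgeResViaTensorRes}, at the price of the restriction to cycles liftable under $I$ (satisfied here because the $f_i$ commute, so $f_0\otimes f_1\wedge\cdots\wedge f_n$ is a cycle in $C(\mathfrak{g})_{\bullet}$). Your proposal never mentions $I$, the compatibility lemma, or the complex $\left.^{\otimes}T_{\bullet}^{\bullet}\right.$, so the step ``inverting the zig-zag amounts to solving lifting equations whose solutions are precisely $P_{i}^{-\gamma_{i}}\operatorname{ad}(f_{\pi(i)})P_{i}^{\gamma_{i}}$'' has no justification: Lemma \ref{BT_PropExplicitDifferentialInSpecSeq} is purely homological and says nothing about idempotents or $\operatorname{ad}$; that identification is exactly the content of the inductive computation in Prop.\ \ref{BT_KeyLemmaFormula} and Thm.\ \ref{BT_MainThmInner}, which you compress into ``a direct combinatorial unpacking.'' Your remark that the lifts exist because $f_0\wedge\cdots\wedge f_n$ is an honest cycle also misses the point: $d_{n+1}$ is an isomorphism in any case (part (3) of the lemma), so existence is not the issue; producing the explicit formula is.

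For contrast, the paper's own proof of this statement is essentially immediate: $\operatorname{res}_{\ast}$ is \emph{defined} (Def.\ \ref{BT_Def_TateBeilinsonAbstractResidueMaps}) as $\tau\circ\rho_{1}\circ(d_{n+1})^{-1}\circ\rho_{2}$ for the spectral sequence of Lemma \ref{BT_PropExplicitDifferentialInSpecSeq} applied to $\left.^{\wedge}T_{\bullet}^{\bullet}\right.$, and the explicit trace formula of Thm.\ \ref{Prop_MainResidueThm} was \emph{derived} from that composition via Lemma \ref{BTrev_ComputeWedgeResViaTensorRes} and Thm.\ \ref{BT_PropDetFormulaForResidue}; so the agreement holds by construction, with no new computation needed. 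If you want the independent re-derivation you sketch, you would either have to construct a contracting homotopy for $\left.^{\wedge}T_{\bullet}^{\bullet}\right.$ itself (which the paper states seems quite difficult) or reinstate the detour through $\left.^{\otimes}T_{\bullet}^{\bullet}\right.$ -- at which point you are reproducing the paper's argument, not shortcutting it.
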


This result is only meaningful to readers familiar with the paper
\cite{MR565095}.\medskip

The above theorem actually lies at the heart of our approach. We formulate a
contracting homotopy for a mild variation of the relevant complexes in
\cite{MR565095} and then, in a slightly tedious computation, make the spectral
sequence differential $d_{n+1}$ explicit on the basis of this.\medskip

Finally, for applications in algebraic geometry, e.g. the interpretation as a
local residue, it is unfortunate to interpret the word \textquotedblleft loop
Lie algebra\textquotedblright\ as $\mathfrak{g}[t,t^{-1}]$. It is better to
work with Laurent series, i.e. $\mathfrak{g}((t))$, or even local components
of ad\`{e}les. Tate's original work uses the language of ad\`{e}les for
example. For this reason, we shall axiomatize all these variations through the
notion of a \textquotedblleft cubically decomposed algebra\textquotedblright%
\ (essentially taken from \cite{MR565095}, where it's not given a name).

\subsection{Acknowledgements}

I am very thankful to Ivan Fesenko and Matthew Morrow for many valuable
discussions, especially on an ad\`{e}le interpretation. I thank the Research
Group of Prof. Marc Levine for the stimulating scientific environment. I
heartily thank the anonymous referee for greatly improving the presentation,
especially in \S \ref{section_CubeComplex}, and observing the fact $H^{2}=0$
in eq. \ref{lBTA_35}, which clarifies a crucial cancellation in the proof of
Prop. \ref{BT_KeyLemmaFormula}.

\subsection{What is not here}

In the present text I only discuss the `linear algebra setting' of Tate's
central extension (\cite[\S 1]{BFM_Conformal} for the case $n=1$). There is
also a `differential operator setting' (\cite[\S 2]{BFM_Conformal}), which I
will treat in a future text. Roughly speaking, $\mathfrak{G}$ will be replaced
by much smaller algebras of differential operators on a vector bundle.

Moreover, I do not treat the true multiloop analogue of an affine Kac-Moody
algebra in the present text. Already for $n=1$ I only consider the `plain'
affine Lie algebras without extending by a derivation. From the perspective of
a triangular decomposition, this is a rather horrible omission: the root
spaces are infinite-dimensional! However, as the reader can probably imagine
from the computations in \S \ref{BT_sect_applications_residue},
\S \ref{BT_sect_applications_multiloop} the calculation gets a lot more
complicated in the presence of derivations. Thus, this aspect will also be
deferred to a future text. The same applies to the analogue of the plain
Virasoro algebra. There should also be a nonlinear analogue, distinguished
cohomology classes for multiloop groups. The cases $n=1,2$ (along with a
higher representation theory in categories) are treated in detail by Frenkel
and Zhu in \cite{Frenkel08092011}.

One should also mention that there are completely orthogonal generalizations
of Kac-Moody/Virasoro cocycles to multiloop Lie algebras, see for example
\cite[\S 9]{MR934284}, \cite{MR2743761}.

\section{\label{section_Frameworks}Basic framework}

For an associative algebra $A$ we shall write $A_{Lie}$ to denote the
associated Lie algebra.

\begin{definition}
[\cite{MR565095}]\label{BT_DefCubicallyDecompAlgebra}An \emph{(}%
$n$\emph{-fold)} \emph{cubically decomposed algebra} (over a field $k$) is the
datum $(A,(I_{i}^{\pm}),\tau)$:

\begin{itemize}
\item an associative unital (not necessarily commutative) $k$-algebra $A$;

\item two-sided ideals $I_{i}^{+},I_{i}^{-}$ such that $I_{i}^{+}+I_{i}^{-}=A
$ for $i=1,\ldots,n$;

\item writing $I_{i}^{0}:=I_{i}^{+}\cap I_{i}^{-}$ and $I_{\operatorname*{tr}%
}:=I_{1}^{0}\cap\cdots\cap I_{n}^{0}$, a $k$-linear map%
\[
\tau:I_{\operatorname*{tr},Lie}/[I_{\operatorname*{tr},Lie},A_{Lie}%
]\rightarrow k\text{.}%
\]

\end{itemize}
\end{definition}

For any finite-dimensional $k$-vector space $V$ certain infinite matrix
algebras act naturally on the $k$-vector space of multiple Laurent polynomials
$V[t_{1}^{\pm1},\ldots,t_{n}^{\pm1}]$. This yields an example of this
structure, see \S \ref{TATE_section_InfiniteMatrixAlgebras} below. There is
also an analogue for $V((t_{1}))\cdots((t_{n}))$, which we leave to the reader
to formulate (this links to higher local fields, see \cite{MR1804915}). Local
components of Parshin-Beilinson ad\`{e}les of schemes yield another example,
see \cite[\S 1]{MR565095}. In \textit{loc. cit.} the ideals $I_{i}^{+}%
,I_{i}^{-}$ are called $X^{i},Y^{i}$. The latter gives the multidimensional
generalization of the ad\`{e}le formulation of Tate \cite{MR0227171}. See
\cite{MR2658047}, \cite{0763.14006}, \cite{MR1374916},
\cite{MorrowResiduePaper} for more background on higher-dimensional ad\`{e}les
and their uses.

\subsection{\label{TATE_section_InfiniteMatrixAlgebras}Infinite matrix
algebras}

Fix a field $k$. Let $R$ be an associative $k$-algebra, not necessarily unital
or commutative. Define an algebra of infinite matrices%
\begin{equation}
E(R):=\{\phi=(\phi_{ij})_{i,j\in\mathbf{Z}},\phi_{ij}\in R\mid\exists K_{\phi
}:\left\vert i-j\right\vert >K_{\phi}\Rightarrow\phi_{ij}=0\}\text{.}
\label{TATEMATRIX_l1}%
\end{equation}
Define a product by $(\phi\cdot\phi^{\prime})_{ik}:=\sum_{j\in\mathbf{Z}}%
\phi_{ij}\phi_{jk}^{\prime}$, the usual matrix multiplication formula; this
sum only has finitely many non-zero terms and one can choose $K_{\phi
\phi^{\prime}}:=K_{\phi}+K_{\phi^{\prime}}$. Then $E(R)$ becomes an
associative $k$-algebra. If $R$ is unital, $E(R)$ is also unital. $E$ is a
functor from associative algebras to associative algebras; for a morphism
$\varphi:R\rightarrow S$ there is an induced morphism $E(\varphi
):E(R)\rightarrow E(S)$ by using $\varphi$ entry-by-entry, i.e. $(E(\varphi
)\phi)_{ij}:=\varphi(\phi_{ij})$. If $I\subseteq R$ is an ideal (which is in
particular a non-unital associative ring), $E(I)\subseteq E(R)$ is an ideal.
Moreover, for ideals $I_{1},I_{2}$ one has $E(I_{1}\cap I_{2})=E(I_{1})\cap
E(I_{2})$ and $E(I_{1}+I_{2})=E(I_{1})+E(I_{2})$, as a sum of ideals. Next,
define%
\begin{align*}
I^{+}(R):=  &  \{\phi\in E(R)\mid\exists B_{\phi}:i<B_{\phi}\Rightarrow
\phi_{ij}=0\}\\
I^{-}(R):=  &  \{\phi\in E(R)\mid\exists B_{\phi}:j>B_{\phi}\Rightarrow
\phi_{ij}=0\}
\end{align*}
and one checks easily that $I^{+}(R),I^{-}(R)$ are two-sided ideals in $E(R)$.
The following figure attempts to visualize the shape of the matrices in
$E(R),I^{+}(R)$ and $I^{-}(R)$ respectively:%
%TCIMACRO{\FRAME{dtbpF}{2.7994in}{0.889in}{0pt}{}{}{Infmatrix1a.eps}%
%{\special{ language "Scientific Word";  type "GRAPHIC";
%maintain-aspect-ratio TRUE;  display "USEDEF";  valid_file "F";
%width 2.7994in;  height 0.889in;  depth 0pt;  original-width 6.4861in;
%original-height 2.0401in;  cropleft "0";  croptop "1";  cropright "1";
%cropbottom "0";  filename 'Infmatrix1a.eps';file-properties "XNPEU";}} }%
%BeginExpansion
\begin{center}
\includegraphics[
height=0.889in,
width=2.7994in
]%
{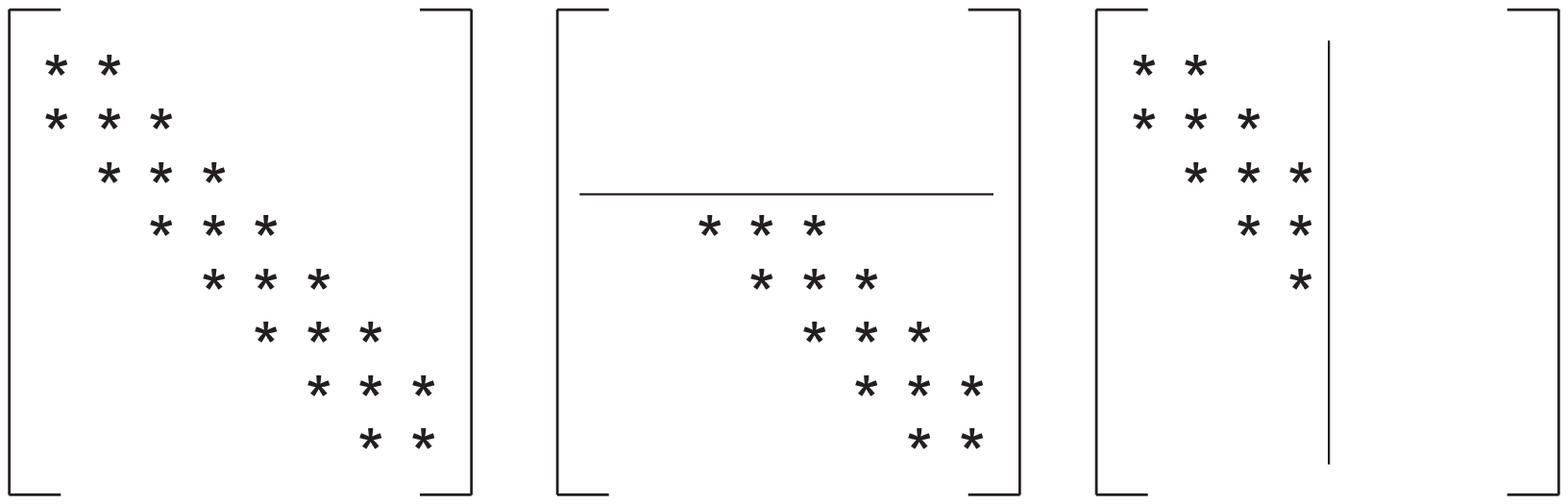}%
\end{center}
%EndExpansion
Define $I^{0}(R):=I^{+}(R)\cap I^{-}(R)$ and one checks that%
\[
I^{0}(R):=\{\phi\in E(R)\mid\phi_{ij}=0\text{ for all but finitely many
}\left(  i,j\right)  \}\text{.}%
\]
There is a trace morphism%
\begin{equation}
\operatorname*{tr}:I^{0}(R)\rightarrow R\text{;}\qquad\operatorname*{tr}\phi:=%
%TCIMACRO{\tsum \nolimits_{i\in\mathbf{Z}}}%
%BeginExpansion
{\textstyle\sum\nolimits_{i\in\mathbf{Z}}}
%EndExpansion
\phi_{ii}\text{,} \label{TATEMATRIX_l3}%
\end{equation}
the sum is obviously finite. One easily verifies that $\operatorname*{tr}%
[\phi,\phi^{\prime}]=%
%TCIMACRO{\tsum \nolimits_{i,j\in\mathbf{Z}}}%
%BeginExpansion
{\textstyle\sum\nolimits_{i,j\in\mathbf{Z}}}
%EndExpansion
[\phi_{ij},\phi_{ji}^{\prime}]$ and thus $\operatorname*{tr}[I^{0}%
(R),E(R)]\subseteq\lbrack R,R] $. More generally, if $R^{\prime}\subseteq R$
is a subalgebra,%
\[
\operatorname*{tr}[I^{0}(R^{\prime}),E(R)]\subseteq\lbrack R^{\prime
},R]\text{.}%
\]
We note that this trace does not necessarily vanish on commutators. Moreover,
every $\phi\in E(R)$ can be written as $\phi=\phi^{+}+\phi^{-}$ with
$\phi_{ij}^{+}:=\delta_{i\geq0}\phi_{ij}$ (for this $R$ need not be unital,
use $\phi_{ij}$ for $i\geq0$ and $0$ otherwise) and $\phi^{-}=\phi-\phi^{+}$.
One checks that $\phi^{\pm}\in I^{\pm}(R)$. It follows that $I^{+}%
(R)+I^{-}(R)=E(R)$.

Finally, let $M$ be an $R$-bimodule (over $k$, i.e. a left-$(A\otimes
_{k}A^{op})$-module; $R$-bimodules form an abelian category). Analogously to
$E(R)$, define%
\begin{equation}
E(M):=\{\phi=(\phi_{ij})_{i,j\in\mathbf{Z}},\phi_{ij}\in M\mid\exists K_{\phi
}:\left\vert i-j\right\vert >K_{\phi}\Rightarrow\phi_{ij}=0\}\text{.}
\label{TATEMATRIX_l2}%
\end{equation}
Again using the matrix multiplication formula, $E(M)$ is an $E(R)$-bimodule.
If $0\rightarrow M^{\prime}\rightarrow M\rightarrow M^{\prime\prime
}\rightarrow0$ is an exact sequence of $R$-bimodules, $0\rightarrow
E(M^{\prime})\rightarrow E(M)\rightarrow E(M^{\prime\prime})\rightarrow0$ is
an exact sequence of $E(R)$-bimodules. Note that for an ideal $I\subseteq R$
the object $E(I)$ is well-defined, regardless whether we regard $I$ as an
associative ring as in eq. \ref{TATEMATRIX_l1} or an $R$-bimodule as in eq.
\ref{TATEMATRIX_l2}.

Now let $V$ be a finite-dimensional $k$-vector space and $R_{0}$ an arbitrary
unital subalgebra of $\operatorname*{End}_{k}(V)$. Define $R_{i}:=E(R_{i-1})$
for $i=1,\ldots,n$. Note that via $k\rightarrow R_{0}$, $\alpha\mapsto
\alpha\cdot\mathbf{1}_{\operatorname*{End}_{k}(V)}$, $k$ is embedded into the
center of $R_{i}$. Then $R_{n}=(E\circ\cdots\circ E)(R_{0})$ is a unital
associative $k$-algebra. Its elements may be indexed $\phi=(\phi_{(i_{n}%
,j_{n}),\ldots,(i_{1},j_{1})\in\mathbf{Z}^{2n}}\in R_{0})$. By the properties
discussed above,%
\[
I_{i}^{\pm}:=(\underset{n}{E}\cdots\underset{i+1}{E}\circ\underset{i}{I^{\pm}%
}\circ\underset{i-1}{E}\cdots\underset{1}{E})(R_{0})\qquad\text{(}I^{\pm
}\text{ in the }i\text{-th place),}%
\]
is an ideal in $R_{n}$ (we use centered subscripts only to emphasize the
numbering). Moreover,%
\begin{align*}
I_{i}^{+}+I_{i}^{-}  &  =(E\cdots E\circ I^{+}\circ E\cdots E)(R_{0})+(E\cdots
E\circ I^{-}\circ E\cdots E)(R_{0})\\
&  =(E\cdots E\circ E\circ E\cdots E)(R_{0})=R_{n}\text{.}%
\end{align*}
By composing the traces of eq. \ref{TATEMATRIX_l3} we arrive at a $k$-linear
map $\tau$,%
\begin{align*}
\tau &  :I_{\operatorname*{tr}}=I_{1}^{0}\cap\cdots\cap I_{n}^{0}=(I^{0}%
\circ\cdots\circ I^{0})(R_{0})\\
&  \qquad\overset{\operatorname*{tr}}{\longrightarrow}\cdots\overset
{\operatorname*{tr}}{\longrightarrow}I^{0}(I^{0}(R_{0}))\overset
{\operatorname*{tr}}{\longrightarrow}I^{0}(R_{0})\overset{\operatorname*{tr}%
}{\longrightarrow}R_{0}\overset{\operatorname*{Tr}}{\longrightarrow}k\text{,}%
\end{align*}
where \textquotedblleft$\operatorname*{Tr}$\textquotedblright\ (as opposed to
\textquotedblleft$\operatorname*{tr}$\textquotedblright) denotes the ordinary
matrix trace of $\operatorname*{End}_{k}(V)$ ($\supseteq R_{0}$). Here we have
used that $V$ is finite-dimensional over $k$. Using $\operatorname*{tr}%
[I^{0}(R^{\prime}),E(R)]\subseteq\lbrack R^{\prime},R]$ (for subalgebras
$R^{\prime}\subseteq R$) inductively, one sees that%
\begin{align*}
\tau\lbrack I_{\operatorname*{tr}},R_{n}]  &  =\operatorname*{Tr}%
(\operatorname*{tr}\circ\cdots\circ\operatorname*{tr}\circ\operatorname*{tr}%
)[I^{0}(I^{0}(\cdots)),E(E(\cdots))]\\
&  \subseteq\operatorname*{Tr}(\operatorname*{tr}\circ\cdots\circ
\operatorname*{tr})[I^{0}(\cdots),E(\cdots)]\subseteq\operatorname*{Tr}%
[R_{0},R_{0}]=0
\end{align*}
since the ordinary trace $\operatorname*{Tr}$ vanishes on commutators. Hence,
$\tau$ factors to a morphism $\tau:I_{\operatorname*{tr},Lie}%
/[I_{\operatorname*{tr},Lie},R_{Lie}]\rightarrow k$. Summarizing, for every
$n\geq1$, every finite-dimensional $k$-vector space $V$ and every unital
subalgebra $R_{0}\subseteq\operatorname*{End}_{k}(V)$, $(R_{n},(I_{i}^{\pm
}),\tau)$ is a cubically decomposed algebra.\newline Finally, note that for
any associative algebra $R$, $E(R)$ is a right-$R$-submodule of \textit{right}%
-$R$-module endomorphisms $\operatorname*{End}_{R}(R[t,t^{-1}])$ of
$R[t,t^{-1}]$. Write elements as $a=\sum_{i\in\mathbf{Z}}a_{i}t^{i}$, also
denoted $a=(a_{i})_{i}$ with $a_{i}\in R$, and let $\phi=(\phi_{ij})$ act by
$\left(  \phi\cdot a\right)  _{i}:=\sum_{k}\phi_{ik}a_{k}$. Moreover, each
$a\in R[t,t^{-1}]$ determines a right-$R$-module endomorphism via the
multiplication operator $x\mapsto a\cdot x$. We find%
\[
R[t,t^{-1}]\hookrightarrow E(R)\hookrightarrow\operatorname*{End}%
\nolimits_{R}(R[t,t^{-1}])\text{.}%
\]
Multiplication with $t^{i}$ is represented by a matrix with a diagonal
$\ldots,1,1,1,\ldots$, shifted by $i$ off the principal diagonal. Inductively,%
\begin{equation}
R_{0}[t_{1}^{\pm1},\ldots,t_{n}^{\pm1}]\hookrightarrow R_{n}\hookrightarrow
\operatorname*{End}\nolimits_{R_{0}}(R_{0}[t_{1}^{\pm1},\ldots,t_{n}^{\pm
1}])\text{.} \label{lBTA_30}%
\end{equation}
See for example \cite[\S 1]{MR723457}, \cite[Lec. 4]{MR1021978} for more
information regarding the case $n=1$ and \cite[\S 3]{Frenkel08092011} for a
similar procedure when $n=2$.

\section{\label{section_CEComplexes}Modified Chevalley-Eilenberg complexes}

Suppose $k$ is a field and $\mathfrak{g}$ a Lie algebra over $k$. We recall
that for any $\mathfrak{g}$-module the conventional Chevalley-Eilenberg
complex is given by $C(M)_{r}:=M\otimes%
%TCIMACRO{\tbigwedge \nolimits^{r}}%
%BeginExpansion
{\textstyle\bigwedge\nolimits^{r}}
%EndExpansion
\mathfrak{g}$ along with the differential%
\begin{align}
\delta &  :=\delta^{\lbrack1]}+\delta^{\lbrack2]}:C(M)_{r}\rightarrow
C(M)_{r-1}\label{lBT_CEComplexDifferential}\\
\delta^{\lbrack1]}(f_{0}\otimes f_{1}\wedge\ldots\wedge f_{r})  &  :=%
%TCIMACRO{\tsum \nolimits_{i=1}^{r}}%
%BeginExpansion
{\textstyle\sum\nolimits_{i=1}^{r}}
%EndExpansion
(-1)^{i}[f_{0},f_{i}]\otimes f_{1}\wedge\ldots\wedge\widehat{f_{i}}%
\wedge\ldots\wedge f_{r}\nonumber\\
\delta^{\lbrack2]}(f_{0}\otimes f_{1}\wedge\ldots\wedge f_{r})  &  :=%
%TCIMACRO{\tsum \nolimits_{1\leq i<j\leq r}}%
%BeginExpansion
{\textstyle\sum\nolimits_{1\leq i<j\leq r}}
%EndExpansion
(-1)^{i+j+1}f_{0}\otimes\lbrack f_{i},f_{j}]\wedge f_{1}\ldots\widehat{f_{i}%
}\ldots\widehat{f_{j}}\ldots\wedge f_{r}\nonumber
\end{align}
for $f_{0}\in M$ and $f_{1},\ldots,f_{r}\in\mathfrak{g}$. Its homology is (if
one wants by definition) Lie homology with coefficients in $M$. There is also
a cohomological analogue; we refer the reader to the literature for details,
e.g. \cite[Ch. 10]{MR1217970}. We may view $k$ itself as a $\mathfrak{g}%
$-module with the trivial structure. There is an obvious morphism%
\begin{equation}
I:C(\mathfrak{g})_{r}\rightarrow C(k)_{r+1}\qquad f_{0}\otimes f_{1}%
\wedge\ldots\wedge f_{r}\mapsto\left(  -1\right)  ^{r}\mathbf{1}_{k}\otimes
f_{0}\wedge f_{1}\wedge\ldots\wedge f_{r} \label{lBT_revLieImap}%
\end{equation}
and one checks easily that this commutes with the respective differentials and
thus induces morphisms $H_{r}\left(  \mathfrak{g},\mathfrak{g}\right)
\rightarrow H_{r+1}\left(  \mathfrak{g},k\right)  $. The linear dual
$\mathfrak{g}^{\ast}:=\operatorname*{Hom}\nolimits_{k}(\mathfrak{g},k)$ is
canonically a $\mathfrak{g}$-module via $\left(  f\cdot\varphi\right)
(g):=\varphi([g,f])$ for $\varphi\in\mathfrak{g}^{\ast}$ and $f,g\in
\mathfrak{g}$. The cohomological analogue of eq. \ref{lBT_revLieImap} is the
morphism $I:H^{r+1}\left(  \mathfrak{g},k\right)  \rightarrow H^{r}\left(
\mathfrak{g},\mathfrak{g}^{\ast}\right)  $ given by%
\[
(I\phi)(f_{1}\wedge\ldots\wedge f_{r})(f_{0}):=\left(  -1\right)  ^{r}%
\phi(f_{0}\wedge f_{1}\wedge\ldots\wedge f_{r})\text{.}%
\]

\begin{remark}
\label{lBT_revCyclicAnalogue}These maps could be viewed as a Lie-theoretic
analogue of map $I$ in Connes' periodicity sequence, see \cite[\S 2.2]%
{MR1217970}. We may view $H_{\ast-1}(\mathfrak{g},\mathfrak{g})$ as a partial
\textquotedblleft uncyclic\textquotedblright\ counterpart of Lie homology. The
true Hochschild analogue would be Leibniz homology, cf. \cite[\S 10.6]%
{MR1217970}. For the present purposes we have however no use for this analogue.
\end{remark}

Let $\mathfrak{j}\subseteq\mathfrak{g}$ be a Lie ideal. As such, it is a
$\mathfrak{g}$-module and we may consider $C(\mathfrak{j})_{\bullet}$.
Following \cite{MR565095} we may work with a `cyclically symmetrized'
counterpart: We write $\mathfrak{j}\wedge%
%TCIMACRO{\tbigwedge \nolimits^{r-1}}%
%BeginExpansion
{\textstyle\bigwedge\nolimits^{r-1}}
%EndExpansion
\mathfrak{g}$ to denote the $\mathfrak{g}$-submodule of $\mathfrak{g}\wedge%
%TCIMACRO{\tbigwedge \nolimits^{r-1}}%
%BeginExpansion
{\textstyle\bigwedge\nolimits^{r-1}}
%EndExpansion
\mathfrak{g}=%
%TCIMACRO{\tbigwedge \nolimits^{r}}%
%BeginExpansion
{\textstyle\bigwedge\nolimits^{r}}
%EndExpansion
\mathfrak{g}$ generated by elements $j\wedge f_{1}\wedge\ldots\wedge f_{r-1}$
such that $j\in\mathfrak{j}$ and $f_{1},\ldots,f_{r-1}\in\mathfrak{g}$. If
$\mathfrak{j}_{i}$, $i=1,2,\ldots$, are Lie ideals, we denote by
$(\bigoplus_{i}\mathfrak{j}_{i})\wedge%
%TCIMACRO{\tbigwedge \nolimits^{r-1}}%
%BeginExpansion
{\textstyle\bigwedge\nolimits^{r-1}}
%EndExpansion
\mathfrak{g}$ the module $\bigoplus_{i}(\mathfrak{j}_{i}\wedge%
%TCIMACRO{\tbigwedge \nolimits^{r-1}}%
%BeginExpansion
{\textstyle\bigwedge\nolimits^{r-1}}
%EndExpansion
\mathfrak{g})$.

\begin{example}
If $k\left\langle s,t,u\right\rangle \,$and $k\left\langle s\right\rangle $
denote a $3$-dimensional abelian Lie algebra along with a $1$-dimensional Lie
ideal, then $%
%TCIMACRO{\tbigwedge \nolimits^{2}}%
%BeginExpansion
{\textstyle\bigwedge\nolimits^{2}}
%EndExpansion
k\left\langle s,t,u\right\rangle $ is $3$-dimensional with basis $s\wedge t$,
$s\wedge u$ and $t\wedge u$. Then $k\left\langle s\right\rangle \wedge
k\left\langle s,t,u\right\rangle $ is $2$-dimensional with basis $s\wedge t$,
$s\wedge u$.
\end{example}

The $k$-vector spaces $CE(\mathfrak{j})_{r}:=\mathfrak{j}\wedge%
%TCIMACRO{\tbigwedge \nolimits^{r-1}}%
%BeginExpansion
{\textstyle\bigwedge\nolimits^{r-1}}
%EndExpansion
\mathfrak{g}$ (for $r\geq1$) and $CE(\mathfrak{j})_{0}:=k$ define a subcomplex
of $C(k)_{\bullet}$. In particular, the differential is given by%
\begin{equation}
\delta(f_{0}\wedge f_{1}\wedge\ldots\wedge f_{r}):=%
%TCIMACRO{\tsum \nolimits_{0\leq i<j\leq r}}%
%BeginExpansion
{\textstyle\sum\nolimits_{0\leq i<j\leq r}}
%EndExpansion
(-1)^{i+j}[f_{i},f_{j}]\wedge f_{0}\wedge\ldots\widehat{f_{i}}\ldots
\widehat{f_{j}}\ldots\wedge f_{r}\text{.} \label{lBT_effectiveCEdifferential}%
\end{equation}
It is well-defined since $\mathfrak{j}$ is a Lie ideal. We get morphisms
generalizing $I$, notably $H_{r}(\mathfrak{g},\mathfrak{j})\rightarrow
H_{r+1}(CE(\mathfrak{j}))$ via $\mathfrak{j}\otimes%
%TCIMACRO{\tbigwedge \nolimits^{r}}%
%BeginExpansion
{\textstyle\bigwedge\nolimits^{r}}
%EndExpansion
\mathfrak{g}\rightarrow\mathfrak{j}\wedge%
%TCIMACRO{\tbigwedge \nolimits^{r}}%
%BeginExpansion
{\textstyle\bigwedge\nolimits^{r}}
%EndExpansion
\mathfrak{g}$ and analogously $H^{r+1}(CE(\mathfrak{j}))\rightarrow
H^{r}(\mathfrak{g},\mathfrak{j}^{\ast})$. We have resisted the temptation to
re-index $CE(-)_{\bullet}$ despite the unpleasant $\left(  +1\right)  $-shift
in eq. \ref{lBT_revLieImap} in order to remain compatible with standard usage
in the following sense:

\begin{lemma}
[{\cite[Lemma 1(a)]{MR565095}}]\label{BT_LemmaOnComputingLieHomology}%
$CE(\mathfrak{g})_{\bullet}$ is a complex of $k$-vector spaces and is
quasi-isomorphic to $k\otimes_{U\mathfrak{g}}^{\mathbf{L}}k$. In particular%
\[
H_{i}(\mathfrak{g},k)=H_{i}(CE(\mathfrak{g})_{\bullet})\text{ and }%
H^{i}(\mathfrak{g},k)=H^{i}(\operatorname*{Hom}\nolimits_{k}(CE(\mathfrak{g}%
)_{\bullet},k))\text{.}%
\]

\end{lemma}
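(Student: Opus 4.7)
My plan is to reduce the statement for the specific case $\mathfrak{j}=\mathfrak{g}$ to the classical fact that the usual Chevalley--Eilenberg complex with trivial coefficients computes $\operatorname{Tor}^{U\mathfrak{g}}(k,k)$. The key observation is that when $\mathfrak{j}=\mathfrak{g}$, the submodule $\mathfrak{j}\wedge\bigwedge^{r-1}\mathfrak{g}$ is simply all of $\bigwedge^{r}\mathfrak{g}$, so that $CE(\mathfrak{g})_{r}=\bigwedge^{r}\mathfrak{g}$ for $r\geq1$, while $CE(\mathfrak{g})_{0}=k$.

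First, I would exhibit a $k$-linear isomorphism $CE(\mathfrak{g})_{\bullet}\xrightarrow{\sim} C(k)_{\bullet}$, where $C(k)_{r}=k\otimes\bigwedge^{r}\mathfrak{g}$ is the conventional Chevalley--Eilenberg complex of $\mathfrak{g}$ with values in the trivial module $k$. In degree $0$ this is the identity $k\to k$, and in degree $r\geq1$ it sends $f_{0}\wedge f_{1}\wedge\cdots\wedge f_{r-1}$ to $1\otimes f_{0}\wedge f_{1}\wedge\cdots\wedge f_{r-1}$. I would next check that this identification intertwines the differentials. Under the trivial $\mathfrak{g}$-action on $k$ we have $[1,f_{i}]=0$, so the $\delta^{[1]}$-part of the differential in eq.~\ref{lBT_CEComplexDifferential} vanishes, and only $\delta^{[2]}$ survives. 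Comparing indices (the $\delta^{[2]}$-formula sums over $1\leq i<j\leq r$ on elements of $k\otimes\bigwedge^{r}\mathfrak{g}$, whereas eq.~\ref{lBT_effectiveCEdifferential} sums over $0\leq i<j\leq r$ on elements of $\bigwedge^{r+1}\mathfrak{g}$), one sees that the two formulas agree up to the index-shift encoded in the definition of $CE(-)_{\bullet}$; the signs $(-1)^{i+j}$ versus $(-1)^{i+j+1}$ also agree after accounting for the fact that pulling the commutator $[f_{i},f_{j}]$ to the front in eq.~\ref{lBT_effectiveCEdifferential} produces no extra sign in $\bigwedge$-notation (since it replaces the two removed slots), matching the placement in $\delta^{[2]}$.

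Once the identification $CE(\mathfrak{g})_{\bullet}=C(k)_{\bullet}$ is established as complexes of $k$-vector spaces, the rest is standard: I would invoke the Koszul-type resolution $U\mathfrak{g}\otimes\bigwedge^{\bullet}\mathfrak{g}\twoheadrightarrow k$ of the trivial right $U\mathfrak{g}$-module $k$, which is a free $U\mathfrak{g}$-resolution (classical, see e.g.~\cite[Ch.~7.7]{MR1217970}). Tensoring over $U\mathfrak{g}$ on the left with the trivial module $k$ gives precisely $C(k)_{\bullet}$, so its homology is $\operatorname{Tor}_{\bullet}^{U\mathfrak{g}}(k,k)=H_{\bullet}(k\otimes_{U\mathfrak{g}}^{\mathbf{L}}k)$, which is by definition Lie homology $H_{\bullet}(\mathfrak{g},k)$. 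The cohomological assertion then follows by applying $\operatorname{Hom}_{k}(-,k)$, which is exact on vector spaces, and comparing with the Chevalley--Eilenberg cochain complex computing $H^{\bullet}(\mathfrak{g},k)$.

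I do not foresee a genuine obstacle here; the only point requiring care is the bookkeeping to verify that the definitions in eq.~\ref{lBT_CEComplexDifferential} and eq.~\ref{lBT_effectiveCEdifferential} really coincide under the index shift used to define $CE(-)_{\bullet}$, and that $CE(\mathfrak{g})_{0}=k$ is compatible with the augmentation $C(k)_{1}\to C(k)_{0}=k$. Everything else is a direct citation of the classical computation of $\operatorname{Tor}^{U\mathfrak{g}}(k,k)$ via the Koszul resolution.
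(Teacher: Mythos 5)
Your argument is correct and is essentially the paper's own proof: the paper simply notes that $CE(\mathfrak{g})_{\bullet}$ \emph{is} the standard Chevalley--Eilenberg complex with trivial coefficients, which classically (via the Koszul resolution $U\mathfrak{g}\otimes\bigwedge^{\bullet}\mathfrak{g}\rightarrow k$) represents $k\otimes_{U\mathfrak{g}}^{\mathbf{L}}k$. One small bookkeeping correction: under your degree-preserving identification the differential of eq. \ref{lBT_effectiveCEdifferential} is \emph{minus} the restriction of $\delta^{[2]}$ from eq. \ref{lBT_CEComplexDifferential} (after the index shift the signs $(-1)^{i+j}$ and $(-1)^{i+j+1}$ differ by a global $-1$), which is harmless here since negating a differential changes neither cycles nor boundaries, or can be repaired by inserting a factor $(-1)^{r}$ in degree $r$ to obtain an honest isomorphism of complexes.
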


\begin{proof}
As we have explained above, $CE(\mathfrak{g})_{\bullet}$ agrees with the
standard Chevalley-Eilenberg complex and the latter is well-known to represent
$k\otimes_{U\mathfrak{g}}^{\mathbf{L}}k$.
\end{proof}

We easily compute%
\begin{align}
&  H_{0}(\mathfrak{g},\mathfrak{j})\overset{\cong}{\underset{I}%
{\longrightarrow}}H_{1}(CE(\mathfrak{j}))\cong\mathfrak{j}/[\mathfrak{g}%
,\mathfrak{j}]\label{lBTrev_curious_identities}\\
&  H^{1}(CE(\mathfrak{j}))\overset{\cong}{\underset{I}{\longrightarrow}}%
H^{0}(\mathfrak{g},\mathfrak{j}^{\ast})\cong\left(  \mathfrak{j}%
/[\mathfrak{g},\mathfrak{j}]\right)  ^{\ast}\text{.}\nonumber
\end{align}
In higher degrees the map $I$ ceases to be an isomorphism.

Nonetheless, this computation hints at the principle of computation which we
shall use below. Beilinson uses $CE(-)_{\bullet}$ in his paper \cite{MR565095}%
, whereas we will only be able to do manageable computations with
$C(-)_{\bullet}$. The map $I$ will serve to deduce facts about $CE(-)_{\bullet
}$ while working with $C(-)_{\bullet}$.

\section{Cubically decomposed algebras}

Let $(A,(I_{i}^{\pm}),\tau)$ be an $n$-fold cubically decomposed algebra over
a field $k$, see Def. \ref{BT_DefCubicallyDecompAlgebra}, i.e. we are given
the following datum:

\begin{itemize}
\item an associative unital (not necessarily commutative) $k$-algebra $A$;

\item two-sided ideals $I_{i}^{+},I_{i}^{-}$ such that $I_{i}^{+}+I_{i}^{-}=A
$ for $i=1,\ldots,n$;

\item writing $I_{i}^{0}:=I_{i}^{+}\cap I_{i}^{-}$ and $I_{\operatorname*{tr}%
}:=I_{1}^{0}\cap\cdots\cap I_{n}^{0}$, a $k$-linear map%
\[
\tau:I_{\operatorname*{tr},Lie}/[I_{\operatorname*{tr},Lie},A_{Lie}%
]\rightarrow k\text{.}%
\]

\end{itemize}

See \S \ref{section_Frameworks} to see how this type of structure arises. As a
shorthand, define $\mathfrak{g}:=A_{Lie}$. For any elements $s_{1}%
,\ldots,s_{n}\in\{+,-,0\}$ we define the \emph{degree }$\deg(s_{1}%
,\ldots,s_{n}):=1+\#\{i\mid s_{i}=0\}$. Next, following \cite{MR565095} we
shall construct complexes of $\mathfrak{g}$-modules:

\begin{definition}
[\cite{MR565095}]For every $1\leq p\leq n+1$ define%
\begin{equation}
\left.  ^{\wedge}T_{\bullet}^{p}\right.  :=\coprod_{\substack{s_{1}%
,\ldots,s_{n}\in\{\pm,0\}\\\deg(s_{1}\ldots s_{n})=p}}\bigcap_{i=1}%
^{n}\left\{
\begin{array}
[c]{ll}%
CE(I_{i}^{+})_{\bullet} & \text{for }s_{i}=+\\
CE(I_{i}^{-})_{\bullet} & \text{for }s_{i}=-\\
CE(I_{i}^{+})_{\bullet}\cap CE(I_{i}^{-})_{\bullet} & \text{for }s_{i}=0
\end{array}
\right.  \label{lBT_DefComplexTWedge}%
\end{equation}
and $\left.  ^{\wedge}T_{\bullet}^{0}\right.  :=CE(\mathfrak{g})_{\bullet}$.
\end{definition}

Each $CE(I_{i}^{\pm})_{\bullet}$ is a complex and all their differentials are
defined by the same formula, eq. \ref{lBT_effectiveCEdifferential}, as such
the intersection of these complexes has a well-defined differential and is a
complex itself. Same for the coproduct. The complex $\left.  ^{\wedge
}T_{\bullet}^{\bullet}\right.  $ is inspired by a cubical object used by
Beilinson \cite{MR565095}.

\begin{example}
For $n=2$ we get complexes%
\begin{align*}
\left.  ^{\wedge}T_{\bullet}^{1}\right.   &  =%
%TCIMACRO{\tcoprod \nolimits_{s_{1},s_{2}\in\{\pm\}}}%
%BeginExpansion
{\textstyle\coprod\nolimits_{s_{1},s_{2}\in\{\pm\}}}
%EndExpansion
CE(I_{1}^{s_{1}})_{\bullet}\cap CE(I_{2}^{s_{2}})_{\bullet}\\
\left.  ^{\wedge}T_{\bullet}^{2}\right.   &  =%
%TCIMACRO{\tcoprod \nolimits_{s_{1}\in\{\pm\}}}%
%BeginExpansion
{\textstyle\coprod\nolimits_{s_{1}\in\{\pm\}}}
%EndExpansion
CE(I_{1}^{s_{1}})_{\bullet}\cap CE(I_{2}^{+})_{\bullet}\cap CE(I_{2}%
^{-})_{\bullet}\\
&  \qquad\oplus%
%TCIMACRO{\tcoprod \nolimits_{s_{2}\in\{\pm\}}}%
%BeginExpansion
{\textstyle\coprod\nolimits_{s_{2}\in\{\pm\}}}
%EndExpansion
CE(I_{1}^{+})_{\bullet}\cap CE(I_{1}^{-})_{\bullet}\cap CE(I_{2}^{s_{2}%
})_{\bullet}\\
\left.  ^{\wedge}T_{\bullet}^{3}\right.   &  =CE(I_{1}^{+})_{\bullet}\cap
CE(I_{1}^{-})_{\bullet}\cap CE(I_{2}^{+})_{\bullet}\cap CE(I_{2}^{-}%
)_{\bullet}\text{.}%
\end{align*}
Note that $CE(I_{1}^{+})_{\bullet}\cap CE(I_{1}^{-})_{\bullet}\neq
CE(I_{1}^{+}\cap I_{1}^{-})_{\bullet}$, e.g. $I_{1}^{+}\wedge I_{1}^{-}$ is a
subspace in degree two of the left-hand side, but not of the right-hand side.
\end{example}

Diverging from \cite{MR565095} we shall primarily use the following slightly
different auxiliary construction (which we will later relate to the above one):

\begin{definition}
For $1\leq p\leq n+1$ let%
\begin{equation}
\left.  ^{\otimes}T_{\bullet}^{p}\right.  :=\coprod_{\substack{s_{1}%
,\ldots,s_{n}\in\{\pm,0\}\\\deg(s_{1}\ldots s_{n})=p}}C(I_{1}^{s_{1}}\cap
I_{2}^{s_{2}}\cap\cdots\cap I_{n}^{s_{n}})_{\bullet}
\label{lBT_DefComplexTTensor}%
\end{equation}
and $\left.  ^{\otimes}T_{\bullet}^{p}\right.  :=C(\mathfrak{g})_{\bullet}$.
\end{definition}

So, instead of the modified Chevalley-Eilenberg complex of
\S \ref{section_CEComplexes} we just use the standard complexes for Lie
homology with suitable coefficients. Clearly the morphism $I:C(\mathfrak{g}%
)_{r}\rightarrow C(k)_{r+1}$ descends to morphisms%
\begin{align*}
C(\mathfrak{g})_{r}\supseteq\qquad C(I_{i}^{s_{i}})_{r}  &  \rightarrow
CE(I_{i}^{s_{i}})_{r+1}\qquad\subseteq C(k)_{r+1}\\
\underset{\in I_{i}^{s_{i}}}{f_{0}}\otimes f_{1}\wedge\ldots\wedge f_{r}  &
\mapsto\left(  -1\right)  ^{r}\underset{\in I_{i}^{s_{i}}}{f_{0}}\wedge
f_{1}\wedge\ldots\wedge f_{r}%
\end{align*}
As we take intersections of Lie ideals on the left $C(I_{1}^{s_{1}}\cap
\ldots)_{\bullet}$, as in eq. \ref{lBT_DefComplexTTensor}, the image lies in
the intersection of the individual images, i.e. $CE(I_{1}^{s_{i}})_{\bullet
}\cap\ldots$, as in eq. \ref{lBT_DefComplexTWedge}. As a result, we obtain
morphisms%
\[
\left.  ^{\otimes}T_{\bullet}^{p}\right.  \overset{I}{\longrightarrow}\left.
^{\wedge}T_{\bullet+1}^{p}\right.  \qquad\text{(for all }p\text{)}%
\]
and since they are a restriction of the map $I$ to subcomplexes, this is a
morphism of complexes, and thus induces maps on homology.

\section{\label{section_CubeComplex}The cube complex}

Next, we shall define maps $\cdots\rightarrow\left.  ^{\otimes}T_{\bullet}%
^{2}\right.  \rightarrow\left.  ^{\otimes}T_{\bullet}^{1}\right.
\rightarrow\left.  ^{\otimes}T_{\bullet}^{0}\right.  \rightarrow0$, so that
$(\left.  ^{\otimes}T_{\bullet}\right.  )^{\bullet}$ becomes an exact
superscript-indexed complex of (subscript-indexed complexes); and the same for
$\left.  ^{\wedge}T_{\bullet}^{\bullet}\right.  $. We begin by discussing
$\left.  ^{\otimes}T_{\bullet}^{\bullet}\right.  $.\newline We define a
$\mathfrak{g}$-module $N^{0}:=\mathfrak{g}$ and for $p\geq1$%
\begin{equation}
N^{p}:=\coprod\nolimits_{s_{1},\ldots,s_{n}\in\{+,-,0\}}I_{1}^{s_{1}}\cap
I_{2}^{s_{2}}\cap\cdots\cap I_{n}^{s_{n}}\text{\qquad(with }\deg(s_{1}%
,\ldots,s_{n})=p\text{).} \label{lBTA_12}%
\end{equation}
We shall denote the components $f=(f_{s_{1}\ldots s_{n}})$ of elements in
$N^{p}$ with indices in terms of $s_{1},\ldots,s_{n}\in\{+,-,0\}$. Clearly
$N^{p}=0$ for $p>n+1$. We shall treat all $N^{p}$ as $\mathfrak{g}$-modules
and observe that%
\[
\left.  ^{\otimes}T_{\bullet}^{p}\right.  =C(N^{p})_{\bullet}%
\]
(by definition!), so by the functoriality and flatness\footnote{We just tensor
$N^{p}$ with the vector spaces $%
%TCIMACRO{\tbigwedge \nolimits^{i}}%
%BeginExpansion
{\textstyle\bigwedge\nolimits^{i}}
%EndExpansion
\mathfrak{g}$. Being over a field, this preserves exact sequences.} of
$C_{\bullet}$ it suffices to construct an exact complex $N^{\bullet}$ out of
the $N^{p}$ and then $\left.  ^{\otimes}T_{\bullet}^{p}\right.  $ will be an
exact complex in $p$.

\begin{example}
For $n=1$ we have%
\[
N^{2}=I_{1}^{0}\text{,\qquad}N^{1}=I_{1}^{+}\oplus I_{1}^{-}%
\]
and elements would be denoted $f=(f_{0})\in N^{2}$ and $g=(g_{+},g_{-})\in
N^{1}$. For $n=2$ we have%
\begin{align*}
N^{3}  &  =I_{1}^{0}\cap I_{2}^{0}\text{,\qquad}N^{2}=\left(
%TCIMACRO{\tcoprod \nolimits_{s_{1}\in\{+,-\}}}%
%BeginExpansion
{\textstyle\coprod\nolimits_{s_{1}\in\{+,-\}}}
%EndExpansion
I_{1}^{s_{1}}\cap I_{2}^{0}\right)  \oplus\left(
%TCIMACRO{\tcoprod \nolimits_{s_{2}\in\{+,-\}}}%
%BeginExpansion
{\textstyle\coprod\nolimits_{s_{2}\in\{+,-\}}}
%EndExpansion
I_{1}^{0}\cap I_{2}^{s_{2}}\right) \\
N^{1}  &  =%
%TCIMACRO{\tcoprod \nolimits_{s_{1},s_{2}\in\{+,-\}}}%
%BeginExpansion
{\textstyle\coprod\nolimits_{s_{1},s_{2}\in\{+,-\}}}
%EndExpansion
I_{1}^{s_{1}}\cap I_{2}^{s_{2}}\text{.}%
\end{align*}

\end{example}

We shall use the shorthand $s_{1}\ldots\pm\ldots s_{n}$ (resp. $0$ instead of
$\pm$) to indicate that $s_{i}\in\{+,-\}$ (resp. $s_{i}=0$) sits in the $i$-th
place. Define $\mathfrak{g}$-module homomorphisms%
\begin{align}
\left(  \partial_{i}f\right)  _{s_{1}\ldots\pm\ldots s_{n}}:=  &  \left(
-1\right)  ^{\#\left\{  j\mid j>i\text{ and }s_{j}=0\right\}  }f_{s_{1}%
\ldots0\ldots s_{n}}\nonumber\\
(\partial_{i}f)_{s_{1}\ldots0\ldots s_{n}}:=  &
0\label{lBT_revDifferentialEasyDef}\\
\partial:=  &
%TCIMACRO{\tsum \nolimits_{i=1}^{n}}%
%BeginExpansion
{\textstyle\sum\nolimits_{i=1}^{n}}
%EndExpansion
\partial_{i}\nonumber
\end{align}
One checks easily that $\partial_{i}^{2}=0$ and $\partial_{i}\partial
_{j}+\partial_{j}\partial_{i}=0$ for all $i,j=1,\ldots,n$. As a consequence,
$\partial^{2}=0$. The components are given explicitly by%
\begin{align}
\left(  \partial f\right)  _{s_{1}\ldots s_{n}}  &  =%
%TCIMACRO{\tsum \nolimits_{i=1}^{n}}%
%BeginExpansion
{\textstyle\sum\nolimits_{i=1}^{n}}
%EndExpansion
(\partial_{i}f)_{s_{1}\ldots s_{n}}\nonumber\\
&  =%
%TCIMACRO{\tsum \limits_{\{i\mid s_{i}=+,-\}}}%
%BeginExpansion
{\textstyle\sum\limits_{\{i\mid s_{i}=+,-\}}}
%EndExpansion
\left(  -1\right)  ^{\#\left\{  j\mid j>i\text{ and }s_{j}=0\right\}
}f_{s_{1}\ldots0\ldots s_{n}}\text{.} \label{lBT_revDiffCubeComplex1}%
\end{align}

\begin{definition}
\label{BT_Def_IdempotentsForA}Let $(A,(I_{i}^{\pm}),\tau)$ be an $n$-fold
cubically decomposed algebra over a field $k$. A \emph{system of good
idempotents} are pairwise commuting elements $P_{i}^{+}\in A$ for
$i=1,\ldots,n$ such that for all $i$:

\begin{enumerate}
\item $P_{i}^{+2}=P_{i}^{+}$.

\item $P_{i}^{+}A\subseteq I_{i}^{+}$.

\item $P_{i}^{-}A\subseteq I_{i}^{-}\qquad$(where we define $P_{i}%
^{-}:=\mathbf{1}_{A}-P_{i}^{+}$).
\end{enumerate}
\end{definition}

We note that the $P_{i}^{-}$ are also pairwise commuting idempotents and
$P_{i}^{+}+P_{i}^{-}=\mathbf{1}_{A}$. Next, for $s_{i}\in\{+,-\}$ define
$k$-vector space homomorphisms%
\begin{align*}
\left(  \varepsilon_{i}f\right)  _{s_{1}\ldots s_{i}\ldots s_{n}}:=  &
\left(  -1\right)  ^{s_{i}}P_{i}^{s_{i}}%
%TCIMACRO{\tsum \nolimits_{\gamma_{i}\in\{\pm\}}}%
%BeginExpansion
{\textstyle\sum\nolimits_{\gamma_{i}\in\{\pm\}}}
%EndExpansion
\left(  -1\right)  ^{\gamma_{i}}f_{s_{1}\ldots\gamma_{i}\ldots s_{n}}\\
\left(  \varepsilon_{i}f\right)  _{s_{1}\ldots0\ldots s_{n}}:=  &  0\text{,}%
\end{align*}
where $(-1)^{\pm}=\pm1$. By direct calculation one verifies the identities
$\varepsilon_{i}^{2}=\varepsilon_{i}$ and $\varepsilon_{i}\varepsilon
_{j}=\varepsilon_{j}\varepsilon_{i}$ for all $i,j=1,\ldots,n$. Finally, define%
\begin{align*}
\left(  H_{i}f\right)  _{s_{1}\ldots0\ldots s_{n}}  &  :=\left(  -1\right)
^{\#\left\{  j\mid j>i\text{ and }s_{j}=0\right\}  }%
%TCIMACRO{\tsum \nolimits_{\gamma_{i}\in\{\pm\}}}%
%BeginExpansion
{\textstyle\sum\nolimits_{\gamma_{i}\in\{\pm\}}}
%EndExpansion
P_{i}^{-\gamma_{i}}f_{s_{1}\ldots\gamma_{i}\ldots s_{n}}\\
\left(  H_{i}f\right)  _{s_{1}\ldots\pm\ldots s_{n}}  &  :=0\text{.}%
\end{align*}
The expression $P_{i}^{-\gamma_{i}}$ means $P_{i}^{-}$ for $\gamma_{i}=+$ and
$P_{i}^{+}$ for $\gamma_{i}=-$. One checks that%
\begin{align*}
H_{i}^{2}=0\qquad &  \text{and}\qquad H_{i}H_{j}+H_{j}H_{i}=0\\
\partial_{i}\varepsilon_{j}=\varepsilon_{j}\partial_{i}\qquad &
\text{and}\qquad H_{i}\varepsilon_{j}=\varepsilon_{j}H_{i}%
\end{align*}
for all $i,j=1,\ldots,n$. Moreover, $\partial_{i}H_{j}+H_{j}\partial_{i}=0$
whenever $i\neq j$. In the special case $i=j$ one finds instead that%
\[
\partial_{i}H_{i}+H_{i}\partial_{i}=\mathbf{1}-\varepsilon_{i}\text{.}%
\]
Define $H:=H_{1}+\varepsilon_{1}H_{2}+\cdots+\varepsilon_{1}\varepsilon
_{2}\cdots\varepsilon_{n-1}H_{n}$. Using the identities established above, one
finds very easily%
\begin{equation}
H^{2}=0\qquad\text{and}\qquad\partial H+H\partial=\mathbf{1}-\varepsilon
_{1}\cdots\varepsilon_{n}\text{.} \label{lBTA_35}%
\end{equation}
The fact $H^{2}=0$ was observed by the anonymous referee; it explains a
certain cancellation in the proof of Prop. \ref{BT_KeyLemmaFormula}, which had
been rather mysterious in an earlier version of this text.

\begin{lemma}
An explicit formula for $H$ is given by%
\begin{align}
(Hf)_{s_{1}\ldots s_{n}}  &  =\left(  -1\right)  ^{\deg(s_{1}\ldots s_{n}%
)}\left(  -1\right)  ^{s_{1}+\cdots+s_{b}}P_{1}^{s_{1}}\cdots P_{i}^{s_{b}%
}\label{lBTA_17}\\
&
%TCIMACRO{\tsum \limits_{\gamma_{1}\ldots\gamma_{b+1}\in\{\pm\}}}%
%BeginExpansion
{\textstyle\sum\limits_{\gamma_{1}\ldots\gamma_{b+1}\in\{\pm\}}}
%EndExpansion
\left(  -1\right)  ^{\gamma_{1}+\cdots+\gamma_{b}}P_{b+1}^{-\gamma_{b+1}%
}f_{\gamma_{1}\ldots\gamma_{b+1}s_{b+2}\ldots s_{n}}\text{,}\nonumber
\end{align}
where $b$ denotes the largest index such that $s_{1},\ldots,s_{b}\in\{\pm\}$
or $b=0$ if none (and so $s_{b+1}=0$ if $b<n$; $b+1$ is the index of the
\textquotedblleft leftmost zero\textquotedblright).
\end{lemma}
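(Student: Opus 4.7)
The plan is to expand $H = H_1 + \varepsilon_1 H_2 + \cdots + \varepsilon_1 \cdots \varepsilon_{n-1} H_n$ coordinate-wise and show that at a fixed index $(s_1,\ldots,s_n)$ exactly one term of this sum contributes. Let $b$ be as in the statement, so $s_{b+1}=0$ is the leftmost zero (if any). Because $(H_j f)_{\ldots}$ vanishes unless the $j$-th coordinate equals $0$, while $(\varepsilon_i g)_{\ldots}$ vanishes unless the $i$-th coordinate is in $\{\pm\}$, the contribution of $\varepsilon_1\cdots\varepsilon_{j-1} H_j f$ to $(Hf)_{s_1\ldots s_n}$ forces $s_1,\ldots,s_{j-1}\in\{\pm\}$ and $s_j=0$. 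The only index satisfying both constraints is $j=b+1$. In the degenerate case $b=n$ (no zeros) every term vanishes and $Hf=0$, which is consistent with the stated formula once one reads empty products as $\mathbf{1}$ and empty sums as $0$.

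Next I would substitute the explicit formulas for $\varepsilon_i$ and $H_{b+1}$. Since the $\varepsilon_i$'s commute and each acts on a different coordinate, applying $\varepsilon_1\cdots\varepsilon_b$ to any element $g$ produces the prefactor $(-1)^{s_1+\cdots+s_b}\,P_1^{s_1}\cdots P_b^{s_b}$ and a sum $\sum_{\gamma_1,\ldots,\gamma_b\in\{\pm\}}(-1)^{\gamma_1+\cdots+\gamma_b}\,g_{\gamma_1\ldots\gamma_b s_{b+1}\ldots s_n}$. Taking $g = H_{b+1}f$ and using $s_{b+1}=0$, the definition of $H_{b+1}$ contributes an extra sign $(-1)^{\#\{j>b+1\,:\,s_j=0\}}$ and an inner sum $\sum_{\gamma_{b+1}\in\{\pm\}}P_{b+1}^{-\gamma_{b+1}}\,f_{\gamma_1\ldots\gamma_{b+1} s_{b+2}\ldots s_n}$. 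Concatenating the outer sum over $\gamma_1,\ldots,\gamma_b$ with this inner sum yields the displayed $\sum_{\gamma_1\ldots\gamma_{b+1}}$, and the projectors $P_1^{s_1},\ldots,P_b^{s_b},P_{b+1}^{-\gamma_{b+1}}$ appear exactly as in the lemma.

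The remaining and only delicate point is the sign comparison. Using $\deg(s_1,\ldots,s_n)=1+\#\{j:s_j=0\}$ and the fact that position $b+1$ is itself a zero, one has $\#\{j>b+1:s_j=0\}=\deg(s_1,\ldots,s_n)-2$, so the sign $(-1)^{\#\{j>b+1:s_j=0\}}$ absorbs into the global $(-1)^{\deg(s_1,\ldots,s_n)}$ displayed in the statement. I expect the main obstacle to be purely combinatorial bookkeeping: the several interlocking sign conventions (the $(-1)^{s_i}$ in $\varepsilon_i$, the $(-1)^{\gamma_i}$ in both $\varepsilon_i$ and $H_i$, and the $(-1)^{\#\{j>i,\,s_j=0\}}$ in $H_i$) have to be tracked simultaneously, and the edge case $b=0$ (leftmost zero at position $1$, no $\varepsilon$'s active) must be checked separately — there the formula degenerates correctly to $(H_1 f)_{0 s_2\ldots s_n}$ with all empty products and sums interpreted in the natural way.
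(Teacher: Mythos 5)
Your argument is correct and follows essentially the same route as the paper: you isolate the single surviving summand $\varepsilon_1\cdots\varepsilon_b H_{b+1}$ of $H=\sum_j \varepsilon_1\cdots\varepsilon_{j-1}H_j$ at a fixed index (the terms with $j<b+1$ die because $s_j\in\{\pm\}$, those with $j>b+1$ because a zero sits among the first $j-1$ slots), use the closed formula for $\varepsilon_1\cdots\varepsilon_b$ (the paper's eq.\ \ref{lBTA_34}, proved there by the same coordinate-wise induction you invoke), and absorb the sign via $\#\{j>b+1: s_j=0\}=\deg(s_1\ldots s_n)-2$. No gap; the bookkeeping you flag is exactly what the paper's proof carries out.
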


\begin{proof}
One shows that%
\begin{align}
\left(  \varepsilon_{1}\cdots\varepsilon_{i}f\right)  _{s_{1}\ldots s_{n}}  &
=\left(  -1\right)  ^{s_{1}+\cdots+s_{i}}P_{1}^{s_{1}}\cdots P_{i}^{s_{i}%
}\nonumber\\
&
%TCIMACRO{\tsum \limits_{\gamma_{1}\ldots\gamma_{i}\in\{\pm\}}}%
%BeginExpansion
{\textstyle\sum\limits_{\gamma_{1}\ldots\gamma_{i}\in\{\pm\}}}
%EndExpansion
\left(  -1\right)  ^{\gamma_{1}+\cdots+\gamma_{i}}f_{\gamma_{1}\ldots
\gamma_{i}s_{i+1}\ldots s_{n}}\label{lBTA_34}\\
&  \qquad\qquad\qquad\text{(for }s_{1},\ldots,s_{i}\in\{\pm\}\text{)}%
\nonumber\\
\left(  \varepsilon_{1}\cdots\varepsilon_{i}f\right)  _{s_{1}\ldots s_{n}}  &
=0\text{,}\qquad\text{(if }0\in\{s_{1},\ldots,s_{i}\}\text{)}\nonumber
\end{align}
by evaluating $(\varepsilon_{j}\cdots\varepsilon_{i}f)$ inductively along
$j=i,i-1,\ldots,1$. Plug in $H_{i+1}f$ for $f$ to obtain%
\begin{align*}
\left(  \varepsilon_{1}\cdots\varepsilon_{i}H_{i+1}f\right)  _{s_{1}\ldots
s_{n}}  &  =\left(  -1\right)  ^{\#\left\{  j\mid j>i+1\text{ and }%
s_{j}=0\right\}  }\left(  -1\right)  ^{s_{1}+\cdots+s_{i}}P_{1}^{s_{1}}\cdots
P_{i}^{s_{i}}\\
&
%TCIMACRO{\tsum \limits_{\gamma_{1}\ldots\gamma_{i+1}\in\{\pm\}}}%
%BeginExpansion
{\textstyle\sum\limits_{\gamma_{1}\ldots\gamma_{i+1}\in\{\pm\}}}
%EndExpansion
\left(  -1\right)  ^{\gamma_{1}+\cdots+\gamma_{i}}P_{i+1}^{-\gamma_{i+1}%
}f_{\gamma_{1}\ldots\gamma_{i}\gamma_{i+1}s_{i+2}\ldots s_{n}}%
\end{align*}
for $s_{1},\ldots,s_{i}\in\{\pm\}$ and $s_{i+1}=0$. Otherwise, i.e. for
$0\in\{s_{1},\ldots,s_{i}\}$ or $s_{i+1}\in\{\pm\}$, the respective component
is zero. Thus,%
\[
H_{s_{1}\ldots s_{n}}=%
%TCIMACRO{\tsum \nolimits_{i=1}^{n}}%
%BeginExpansion
{\textstyle\sum\nolimits_{i=1}^{n}}
%EndExpansion
\left(  \varepsilon_{1}\cdots\varepsilon_{i}H_{i+1}f\right)  _{s_{1}\ldots
s_{n}}\text{.}%
\]
The summands with $i>b$ vanish since for them $0\in\{s_{1},\ldots,s_{i}\}$.
The summands with $i<b$ vanish since for them $s_{i+1}\in\{\pm\}$. Thus,%
\[
H_{s_{1}\ldots s_{n}}=\left(  \varepsilon_{1}\cdots\varepsilon_{b}%
H_{b+1}f\right)  _{s_{1}\ldots s_{n}}%
\]
and we use the above explicit formula. Note that $\#\left\{  j\mid j>b+1\text{
and }s_{j}=0\right\}  $ is just one below the total number of slots with value
$0$ since $s_{1},\ldots,s_{b}\in\{\pm\}$ and $s_{b+1}=0$. Thus, $\left(
-1\right)  ^{\#\left\{  j\mid j>i+1\text{ and }s_{j}=0\right\}  }=\left(
-1\right)  ^{\deg(s_{1}\ldots s_{n})}$.
\end{proof}

The above maps are defined for $N^{p}$ in degrees $\geq1$. We extend them to
degree zero by%
\[
\hat{\partial}:N^{1}\rightarrow N^{0}\qquad\text{and}\qquad\hat{H}%
:N^{0}\rightarrow N^{1}%
\]%
\begin{align}
\hat{\partial}f  &  :=%
%TCIMACRO{\tsum \limits_{s_{1}\ldots s_{n}\in\{+,-\}}}%
%BeginExpansion
{\textstyle\sum\limits_{s_{1}\ldots s_{n}\in\{+,-\}}}
%EndExpansion
\left(  -1\right)  ^{s_{1}+\cdots+s_{n}}f_{s_{1}\ldots s_{n}}\nonumber\\
(\hat{H}f)_{s_{1}\ldots s_{n}}  &  :=(-1)^{s_{1}+\cdots+s_{n}}P_{1}^{s_{1}%
}\cdots P_{n}^{s_{n}}f\text{.} \label{lBTA_14}%
\end{align}
Along with these, we obtain the following crucial fact:

\begin{lemma}
\label{BT_Prop_EstablishKeyCubeComplex}Equipped with these morphisms%
\begin{equation}
N^{\bullet}=[N^{n+1}\underset{H}{\overset{\partial}{\rightleftarrows}}%
N^{n}\underset{H}{\overset{\partial}{\rightleftarrows}}\cdots\underset
{H}{\overset{\partial}{\rightleftarrows}}N^{1}\underset{\hat{H}}{\overset
{\hat{\partial}}{\rightleftarrows}}N^{0}]_{n+1,0} \label{lBTA_7}%
\end{equation}
is a complex of $\mathfrak{g}$-modules with differentials $\partial_{\bullet}$
(resp. $\hat{\partial}$) and contracting homotopies $H_{\bullet}$ (resp
$\hat{H}$) in the category of $k$-vector spaces.
\end{lemma}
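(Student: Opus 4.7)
The statement packages two claims: that $N^\bullet$ is a complex of $\mathfrak{g}$-modules, and that $(H,\hat H)$ is a contracting homotopy in the category of $k$-vector spaces. Almost every ingredient is already in hand from the preceding identities, so the plan is mostly to assemble them, with two boundary identities left to verify directly: $\hat\partial\partial = 0$ at the foot of the complex and $\hat\partial\hat H = \mathbf{1}_{N^0}$, together with the compatibility of the middle-degree identity $\partial H + H\partial = \mathbf{1} - \varepsilon_{1}\cdots\varepsilon_{n}$ (eq.\ \ref{lBTA_35}) with the boundary contributions.

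For the complex structure, the $\mathfrak{g}$-linearity of the $\partial_i$ and of $\hat\partial$ is immediate: the $\mathfrak{g}$-action on $N^p$ is componentwise, and both operators merely permute components and multiply by scalar signs $\pm 1$. The identity $\partial^2=0$ has already been recorded. To check $\hat\partial\partial = 0$, I would expand, for $f\in N^2$, the double sum over the position $i$ to be zeroed out and the sign pattern $(s_1,\ldots,s_n)\in\{\pm\}^n$: each resulting summand is proportional to the component $f_{s_{1}\ldots 0 \ldots s_{n}}$ with $0$ at position $i$, which does not depend on $s_i$; summing the factor $(-1)^{s_i}$ coming from $\hat\partial$ over $s_i\in\{\pm\}$ then annihilates every term. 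By contrast, the homotopies involve multiplication by the $P_i^{\pm}\in A$ on individual components, which does not commute with the Lie bracket action; this is why only $k$-linearity, not $\mathfrak{g}$-linearity, is claimed for $H$ and $\hat H$.

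For the contracting homotopy, the bottom identity $\hat\partial\hat H = \mathbf{1}_{N^0}$ collapses immediately: the two factors $(-1)^{s_{1}+\cdots+s_{n}}$ in $\hat\partial$ and $\hat H$ cancel, and what remains is $\prod_i(P_i^+ + P_i^-) = \mathbf{1}$. In positive degrees the plan is to invoke eq.\ \ref{lBTA_35} and exploit the key observation that $\varepsilon_{1}\cdots\varepsilon_{n}$ is supported on $N^1$: the explicit formula eq.\ \ref{lBTA_34} shows that $\varepsilon_{1}\cdots\varepsilon_{n}f$ only reads from and writes to components indexed by $\{\pm\}^n$, so it kills $N^p$ for all $p\geq 2$. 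Hence $\partial H + H\partial = \mathbf{1}$ on each such $N^p$ at once, and at the top $p = n+1$ nothing further is needed because $H$ takes values in $N^{n+2} = 0$. The only remaining degree is $p = 1$: there $\partial$ is zero (its formula references components with a zero coordinate, absent in $N^1$), so eq.\ \ref{lBTA_35} reduces to $\partial H = \mathbf{1} - \varepsilon_{1}\cdots\varepsilon_{n}$. I will close the gap by verifying directly that $\hat H\hat\partial = \varepsilon_{1}\cdots\varepsilon_{n}$ on $N^1$; both sides produce $(-1)^{s_{1}+\cdots+s_{n}}P_{1}^{s_{1}}\cdots P_{n}^{s_{n}}$ multiplied by the same alternating inner sum over $(\gamma_1,\ldots,\gamma_n)\in\{\pm\}^n$, so the comparison is clean. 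Combining, $\partial H + \hat H\hat\partial = \mathbf{1}$ on $N^1$, completing the homotopy relation.

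The main obstacle is not conceptual but notational: one must track the sign conventions $(-1)^{s_i}$, $(-1)^{\deg(\cdot)}$, and the $\#\{j\mid j>i,\,s_j=0\}$ exponent simultaneously across $\partial_i$, $H_i$, $\varepsilon_i$, and $\hat H$ without error. The conceptually pleasant point, already highlighted in the text, is that $\varepsilon_{1}\cdots\varepsilon_{n}$ concentrates the entire failure of $H$ to be a full contracting homotopy in the single degree $p=1$, which is exactly where $\hat H\hat\partial$ intervenes to cancel it; the referee's observation $H^{2}=0$, although not logically required here, is a useful sanity check that no unforeseen cross-terms appear.
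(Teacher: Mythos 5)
Your proposal is correct and follows essentially the same route as the paper's proof: invoke $\partial H+H\partial=\mathbf{1}-\varepsilon_{1}\cdots\varepsilon_{n}$ (eq.\ \ref{lBTA_35}), note that $\varepsilon_{1}\cdots\varepsilon_{n}$ vanishes on $N^{p}$ for $p\geq2$, and close degrees $1$ and $0$ by the computations $\hat{H}\hat{\partial}=\varepsilon_{1}\cdots\varepsilon_{n}$ on $N^{1}$ and $\hat{\partial}\hat{H}=\mathbf{1}$ on $N^{0}$. The only difference is that you spell out the points the paper calls easy ($\hat{\partial}\circ\partial=0$, $\mathfrak{g}$-linearity of the differentials, and $\partial\mid_{N^{1}}=0$), which is harmless.
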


\begin{proof}
The identities $\partial^{2}=0$ and $\hat{\partial}\circ\partial
=0:N^{2}\rightarrow N^{0}$ are easy to check. Next, we confirm the contracting
homotopy. We find $\partial H+H\partial=\mathbf{1}-\varepsilon_{1}%
\cdots\varepsilon_{n}$ by a telescope cancellation. For $f\in N^{i}$ with
$i\geq2$ for each component $f_{s_{1}\ldots s_{n}}$ there must be at least one
$i$ with $s_{i}=0$ and thus $\varepsilon_{1}\cdots\varepsilon_{n}\mid_{N^{i}%
}=0$ for $i\geq2$. It remains to treat $i=0,1$. For $i=1$ we compute%
\[
\hat{H}\hat{\partial}f=(-1)^{s_{1}+\cdots+s_{n}}P_{1}^{s_{1}}\cdots
P_{n}^{s_{n}}%
%TCIMACRO{\tsum \limits_{s_{1}\ldots s_{n}\in\{+,-\}}}%
%BeginExpansion
{\textstyle\sum\limits_{s_{1}\ldots s_{n}\in\{+,-\}}}
%EndExpansion
\left(  -1\right)  ^{s_{1}+\cdots+s_{n}}f_{s_{1}\ldots s_{n}}=\varepsilon
_{1}\cdots\varepsilon_{n}f
\]
(as in eq. \ref{lBTA_34}). Thus, $\partial H+\hat{H}\hat{\partial}=\mathbf{1}$
on $N^{1}$. Finally, for $i=0$ we compute $\hat{\partial}\hat{H}f=f$.
\end{proof}

\begin{corollary}
$0\rightarrow\left.  ^{\otimes}T_{\bullet}^{n+1}\right.  \rightarrow\left.
^{\otimes}T_{\bullet}^{n}\right.  \rightarrow\cdots\rightarrow\left.
^{\otimes}T_{\bullet}^{0}\right.  \rightarrow0$ with differential (and a
contracting homotopy) induced by $\partial\otimes\operatorname*{id}%
\nolimits_{\wedge^{\bullet}\mathfrak{g}}$ (and $H\otimes\operatorname*{id}%
\nolimits_{\wedge^{\bullet}\mathfrak{g}}$) is an exact complex of (complexes
of $k$-vector spaces).
\end{corollary}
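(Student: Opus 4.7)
The plan is to deduce the corollary directly from Lemma~\ref{BT_Prop_EstablishKeyCubeComplex} by a functoriality-plus-flatness argument. The defining identity $^{\otimes}T_{\bullet}^{p}=C(N^{p})_{\bullet}$ (read off from eq.~\ref{lBT_DefComplexTTensor} and eq.~\ref{lBTA_12}) reduces everything to the question of applying the Chevalley--Eilenberg functor $M\mapsto C(M)_{\bullet}=M\otimes_{k}\bigwedge^{\bullet}\mathfrak{g}$ to the exact sequence of $\mathfrak{g}$-modules produced in that lemma.

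First I would verify that the maps $\partial_{p}:N^{p}\rightarrow N^{p-1}$ of eq.~\ref{lBT_revDifferentialEasyDef} (and $\hat{\partial}$ on the tail) are genuine $\mathfrak{g}$-module homomorphisms. This is immediate from their componentwise formulas, which only project onto a subfactor and multiply by a sign---operations commuting with the diagonal $\mathfrak{g}$-action on $N^{p}$. Functoriality of $C(-)$ then hands us chain maps $^{\otimes}T_{\bullet}^{p}\rightarrow{}^{\otimes}T_{\bullet}^{p-1}$, and in subscript degree $r$ these coincide with $\partial_{p}\otimes\mathrm{id}_{\bigwedge^{r}\mathfrak{g}}$ (respectively $\hat{\partial}\otimes\mathrm{id}$), exactly as asserted.

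Next I would invoke exactness. Since $k$ is a field, every $k$-vector space is flat, so for each fixed $r\geq 0$ the functor $(-)\otimes_{k}\bigwedge^{r}\mathfrak{g}$ is exact. Applying it to the exact sequence of $k$-vector spaces underlying $0\rightarrow N^{n+1}\rightarrow\cdots\rightarrow N^{0}\rightarrow 0$ from Lemma~\ref{BT_Prop_EstablishKeyCubeComplex} produces an exact sequence in each subscript degree, which is exactly the exactness required of $0\rightarrow{}^{\otimes}T_{\bullet}^{n+1}\rightarrow\cdots\rightarrow{}^{\otimes}T_{\bullet}^{0}\rightarrow 0$ as a sequence of complexes of $k$-vector spaces. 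For the explicit contracting homotopy one simply tensors $H$ (resp.~$\hat{H}$) with $\mathrm{id}_{\bigwedge^{\bullet}\mathfrak{g}}$; the identity $\partial H+H\partial=\mathbf{1}-\varepsilon_{1}\cdots\varepsilon_{n}$ from eq.~\ref{lBTA_35} tensors through, and the exceptional term $\varepsilon_{1}\cdots\varepsilon_{n}$ vanishes in the relevant superscript degrees by the cancellation already exhibited in the proof of the lemma.

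No substantial obstacle arises; the statement is essentially a bookkeeping consequence of the previous lemma. The only conceptual point worth flagging is that $H$ is only $k$-linear rather than $\mathfrak{g}$-linear, so $H\otimes\mathrm{id}$ provides a contracting homotopy in the category of $k$-vector spaces rather than in the category of $\mathfrak{g}$-modules---which is precisely what the corollary claims and all that will be needed for the spectral sequence argument announced after Theorem~\ref{intro_Thm5CocycleAgreesWithBeilinsons}.
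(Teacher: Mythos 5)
Your proposal is correct and follows the paper's own route: the paper likewise observes that $^{\otimes}T_{\bullet}^{p}=C(N^{p})_{\bullet}$ and disposes of the corollary in one line by noting that tensoring with $\bigwedge^{r}\mathfrak{g}$ over the field $k$ is exact, the contracting homotopy (including the $\hat{H}$, $\hat{\partial}$ tail handling the low degrees) being carried over from Lemma~\ref{BT_Prop_EstablishKeyCubeComplex} exactly as you describe. Your additional checks that $\partial$, $\hat{\partial}$ are $\mathfrak{g}$-module maps and that the homotopy identities tensor through are just a more explicit spelling-out of the same argument.
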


For the corollary just use that tensoring with $%
%TCIMACRO{\tbigwedge \nolimits^{r}}%
%BeginExpansion
{\textstyle\bigwedge\nolimits^{r}}
%EndExpansion
\mathfrak{g}$ is exact.

\section{\label{section_CubeComplexII}The cube complex II}

Next, it would be nice to give a discussion of the $\left.  ^{\wedge
}T_{\bullet}^{\bullet}\right.  $ parallel to the one for $\left.  ^{\otimes
}T_{\bullet}^{\bullet}\right.  $ in the previous section. We can only do this
to a limited extent, however.

\begin{lemma}
\label{BTrev_LemmaConstructTWedgeComplex}The definition%
\begin{equation}
\left(  \partial f\right)  _{s_{1}\ldots s_{n}}=%
%TCIMACRO{\tsum \limits_{\{i\mid s_{i}=+,-\}}}%
%BeginExpansion
{\textstyle\sum\limits_{\{i\mid s_{i}=+,-\}}}
%EndExpansion
\left(  -1\right)  ^{\#\left\{  j\mid j>i\text{ and }s_{j}=0\right\}
}f_{s_{1}\ldots0\ldots s_{n}} \label{lBT_revDiffCubeComplex2}%
\end{equation}
turns $\left.  ^{\wedge}T_{\bullet}^{\bullet}\right.  $ into a complex of
(complexes of $k$-vector spaces) with respect to the superscript index. The
morphisms $\left.  ^{\otimes}T_{\bullet}^{p}\right.  \overset{I}%
{\longrightarrow}\left.  ^{\wedge}T_{\bullet+1}^{p}\right.  $ yield a morphism
of complexes.
\end{lemma}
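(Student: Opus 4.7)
The plan is to reduce everything to bookkeeping that mirrors, step by step, the already established structure on $N^\bullet$: the differential in eq. \ref{lBT_revDiffCubeComplex2} is formally the very same formula as eq. \ref{lBT_revDiffCubeComplex1}, just applied to the wedge-style complexes $CE(I_i^{\pm})_\bullet$ instead of to ideals of $\mathfrak{g}$. So the lemma will follow once four items have been checked: (a) well-definedness on $\left.^{\wedge}T_\bullet^\bullet\right.$; (b) commutation with the internal Chevalley--Eilenberg differential of eq. \ref{lBT_effectiveCEdifferential}; (c) the identity $\partial^2 = 0$; (d) compatibility with the map $I$.

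For (a), fix $f \in \left.^{\wedge}T_\bullet^p\right.$ and consider an index $(s_1,\ldots,s_n)$ with $\deg = p-1$ at which we want to land. The formula produces a sum of terms $f_{s_1 \ldots 0 \ldots s_n}$, which by assumption already lie in the intersection in which the $i$-th slot contributes $CE(I_i^+) \cap CE(I_i^-)$. Replacing $0$ by either $s_i = +$ or $s_i = -$ relaxes the $i$-th constraint from $CE(I_i^+) \cap CE(I_i^-)$ down to $CE(I_i^{s_i})$, so each summand automatically lies in the target intersection of $\left.^{\wedge}T_\bullet^{p-1}\right.$. For (b), note that the CE differential on every $CE(J)_\bullet$ is given by the \emph{same} universal formula of eq. \ref{lBT_effectiveCEdifferential}, acting only on the wedge factors, whereas $\partial$ is $k$-linear in the components and only rearranges subscripts with signs; hence the two commute at each component.

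For (c), I would transcribe the argument used for $N^\bullet$ in Lemma \ref{BT_Prop_EstablishKeyCubeComplex}: decompose $\partial = \sum \partial_i$ with $\partial_i$ acting only on the $i$-th slot, observe $\partial_i^2 = 0$ (two $\pm \to 0$ replacements in the same slot is impossible), and check the pairwise anticommutation $\partial_i\partial_j + \partial_j\partial_i = 0$ for $i \neq j$ by comparing the two sign exponents $(-1)^{\#\{l > i : s_l = 0\}}$ and $(-1)^{\#\{l > j : s_l = 0\}}$ obtained in the two orders; they differ by exactly one transposition of a zero, producing opposite signs. Thus $\partial^2 = 0$ purely formally, independently of the algebraic content of the components.

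Finally, for (d), recall that $I$ is defined componentwise by $f_0 \otimes f_1 \wedge \ldots \wedge f_r \mapsto (-1)^r f_0 \wedge f_1 \wedge \ldots \wedge f_r$ on each Lie ideal, and that it was already verified in \S\ref{section_CEComplexes} to commute with the CE differential. Since both $\partial$'s are given by identical combinatorial formulas in the subscripts $(s_1,\ldots,s_n)$ with identical signs, $I$ applied componentwise trivially intertwines them. Combined with the differential commutation, this gives a morphism of bicomplexes, which on taking total complexes (or simply viewed degreewise) yields the asserted morphism of complexes. The only real obstacle is the careful sign bookkeeping in (c), but this is precisely the telescope-style cancellation already exploited in the proof of Lemma \ref{BT_Prop_EstablishKeyCubeComplex}; nothing new enters.
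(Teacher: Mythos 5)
Your proposal is correct and follows essentially the same route as the paper's (very terse) proof: well-definedness plus the sign bookkeeping for $\partial_i^2=0$ and $\partial_i\partial_j+\partial_j\partial_i=0$ transcribed from the $N^\bullet$ case, and compatibility with $I$ because both differentials are given by the identical componentwise formula. You have simply spelled out the checks the paper leaves to the reader.
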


\begin{proof}
Easy. Just check that the map $\partial$ is well-defined and satisfies
$\partial^{2}=0$; in fact exactly the same computation as in eqs.
\ref{lBT_revDifferentialEasyDef} applies. For the second claim, we just need
to show that the map $I$ commutes with the differential of either complex, but
this is clear since the differentials are given by the same formula, compare
eq. \ref{lBT_revDiffCubeComplex1} with eq. \ref{lBT_revDiffCubeComplex2}.
\end{proof}

The complex $\left.  ^{\wedge}T_{\bullet}^{\bullet}\right.  $ is the central
object in Beilinson's construction \cite{MR565095}. We will use its analogue
$\left.  ^{\otimes}T_{\bullet}^{\bullet}\right.  $ as an auxiliary
computational device. Firstly, let us explain Beilinson's construction. We
need the following entirely homological tool:

\begin{lemma}
\label{BT_PropExplicitDifferentialInSpecSeq}Suppose we are given an exact
sequence%
\[
S^{\bullet}=[S^{n+1}\rightarrow S^{n}\rightarrow\cdots\rightarrow
S^{0}]_{n+1,0}%
\]
with entries in $\mathbf{Ch}^{+}\mathcal{M}od_{k}$, i.e. each $S^{i}%
=S_{\bullet}^{i}$ is a bounded below complex of $k$-vector spaces\footnote{One
may alternatively view this as a bicomplex supported horizontally in degrees
$[0,n+1]$, bounded from below, and whose rows are exact.}.

\begin{enumerate}
\item There is a second quadrant homological spectral sequence $(E_{p,q}%
^{r},d_{r})$ converging to zero such that%
\[
E_{p,q}^{1}=H_{q}(S_{\bullet}^{p})\text{.\qquad\emph{(}}d_{r}:E_{p,q}%
^{r}\rightarrow E_{p-r,q+r-1}^{r}\text{\emph{)}}%
\]

\item There is a first quadrant cohomological spectral sequence $(E_{r}%
^{p,q},d^{r})$ converging to zero such that%
\[
E_{1}^{p,q}=H^{q}(\operatorname*{Hom}\nolimits_{k}(S_{\bullet}^{p}%
,k))\text{.}\qquad\emph{(}d^{r}:E_{r}^{p,q}\rightarrow E_{r}^{p+r,q-r+1}%
\emph{)}%
\]

\item The following differentials are isomorphisms:%
\[
d_{n+1}:E_{n+1,1}^{n+1}\rightarrow E_{0,n+1}^{n+1}\qquad\text{and}\qquad
d^{n+1}:E_{n+1}^{0,n+1}\rightarrow E_{n+1}^{n+1,1}\,\text{.}%
\]

\item \label{lemma_specseq_homotopypart}Suppose $H_{p}:S^{p}\rightarrow
S^{p+1}$ is a contracting homotopy for $S^{\bullet}$. Then%
\[
(d_{n+1})^{-1}=H_{n}\delta_{1}H_{n-1}\cdots\delta_{n-1}H_{1}\delta_{n}%
H_{0}=H_{n}%
%TCIMACRO{\tprod \nolimits_{i=1,\ldots,n}}%
%BeginExpansion
{\textstyle\prod\nolimits_{i=1,\ldots,n}}
%EndExpansion
(\delta_{i}H_{n-i})
\]
(where the last product depends on the ordering and refers to composition),
and%
\[
(d^{n+1})^{-1}=H_{0}^{\ast}\delta_{n}^{\ast}H_{1}^{\ast}\cdots\delta_{1}%
^{\ast}H_{n}^{\ast}=H_{0}^{\ast}%
%TCIMACRO{\tprod \nolimits_{i=n,\ldots,1}}%
%BeginExpansion
{\textstyle\prod\nolimits_{i=n,\ldots,1}}
%EndExpansion
(\delta_{i}^{\ast}H_{n+1-i}^{\ast})\text{,}%
\]
where we write $f^{\ast}=\operatorname*{Hom}\nolimits_{k}(f,k)$ as a shorthand.
\end{enumerate}

The construction is functorial in $S^{\bullet}$, i.e. if $S^{\bullet
}\rightarrow S^{\prime\bullet}$ is a morphism of complexes as in our
assumptions, then there are induced morphisms between their spectral sequences.
\end{lemma}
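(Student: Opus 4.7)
The plan is to view $S^{\bullet}$ as a bicomplex living in columns $p \in [0, n+1]$ with exact rows, and invoke the two standard spectral sequences coming from the column and row filtrations of its total complex $\operatorname{Tot} S$. For part (1), the column filtration gives $E^{1}_{p,q} = H_{q}(S_{\bullet}^{p})$, with $d_{1}$ induced by the horizontal differential $\delta_{H}$. The row filtration has $E^{1}$ page equal to the horizontal cohomology, which vanishes by the exactness hypothesis; hence $\operatorname{Tot} S$ is acyclic and both filtrations produce spectral sequences abutting to zero. Part (2) follows by applying $\operatorname{Hom}_{k}(-,k)$ entrywise: over a field this preserves exactness, reverses the arrows to give a first-quadrant cohomological double complex, and yields the claimed $E_{1}$ page.

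For part (3), the positions $(n+1,1)$ and $(0, n+1)$ are extremal within the strip $[0, n+1]$. At $(n+1, 1)$ no incoming $d_{r}$ for $r \geq 1$ can exist, since its source would sit in column $n+1+r$, outside the support; hence $E^{r+1}_{n+1,1} = \ker d_{r}$. Dually, at $(0, n+1)$ no outgoing $d_{r}$ exists, so $E^{r+1}_{0, n+1} = \operatorname{coker} d_{r}$. Once $r \geq n+2$, the remaining differentials have target, respectively source, outside the strip, so both terms stabilize at page $n+2$ and must equal $E^{\infty} = 0$. This forces $d_{n+1}: E^{n+1}_{n+1,1} \to E^{n+1}_{0,n+1}$ to be simultaneously injective and surjective, hence an isomorphism. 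The cohomological analogue is dual.

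For part (4), I would run the standard zig-zag that implements the spectral sequence differential. Given $\alpha_{0} \in S^{0}_{n+1}$ with $\delta_{V} \alpha_{0} = 0$, set $\alpha_{1} := H_{0}\alpha_{0}$; the identity $\delta_{H}H + H\delta_{H} = \mathrm{id}$ combined with $S^{-1} = 0$ yields $\delta_{H}\alpha_{1} = \alpha_{0}$. Inductively set $\alpha_{k+1} := H_{k}\delta_{V}\alpha_{k}$; the anticommutation $\delta_{H}\delta_{V} = -\delta_{V}\delta_{H}$ together with the homotopy identity forces $\delta_{H}\alpha_{k+1} = \delta_{V}\alpha_{k}$, which is precisely the zig-zag relation producing $d_{n+1}$ in the forward direction. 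The terminal $\alpha_{n+1} \in S^{n+1}_{1}$ is automatically $\delta_{V}$-closed (since $S^{n+2} = 0$), represents a class in $E^{n+1}_{n+1,1}$, and maps to $\alpha_{0}$ under $d_{n+1}$. Unwinding the recursion yields
\[
(d_{n+1})^{-1} \;=\; H_{n}\, \delta_{V}\, H_{n-1}\, \delta_{V} \cdots H_{1}\, \delta_{V}\, H_{0},
\]
which is the stated formula with the $\delta_{i}$'s acting purely as positional counters for the $n$ successive applications of $\delta_{V}$. The dual formula for $(d^{n+1})^{-1}$ follows from the same zig-zag after applying $\operatorname{Hom}_{k}(-,k)$; functoriality in $S^{\bullet}$ is immediate from naturality of the entire construction.

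The main obstacle is bookkeeping rather than conceptual: in (3) one must verify that no premature differential disturbs the two extreme entries before page $n+1$ (the edge-of-support argument handles this cleanly), and in (4) one must track the alternating signs and ordering when dualizing to obtain the cohomological formula. Both issues are formal but require some care.
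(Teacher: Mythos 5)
Your proposal is correct and is essentially the paper's own argument: the two bicomplex spectral sequences (the row-direction one degenerates by exactness, forcing the total complex to be acyclic, while the column-direction one gives the stated $E^{1}$-page and converges to zero), the support-in-$[0,n+1]$ observation for part (3), and the zig-zag you spell out is precisely the ``diagram chase'' the paper invokes for part (4). One small point to tighten: the $\delta_V$-closedness of $\alpha_{n+1}$ does not follow from $S^{n+2}=0$ alone, but from $\delta_H\delta_V\alpha_{n+1}=\pm\delta_V\delta_H\alpha_{n+1}=\pm\delta_V\delta_V\alpha_{n}=0$ combined with the injectivity of $\delta_H\colon S^{n+1}\to S^{n}$, which row-exactness together with $S^{n+2}=0$ supplies.
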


\begin{proof}
Parts (1)-(3) are \cite[Lemma 1(a)]{MR565095}. More precisely, for
\textbf{(1)} use the bicomplex spectral sequence for%
\[
E_{p,q}^{0}=S_{q}^{p}\text{\quad and\quad}E_{0}^{p,q}=\operatorname*{Hom}%
\nolimits_{k}(S_{q}^{p},k)\text{.}%
\]
If we take differentials `$\rightarrow$' for forming the $E^{0}$-page, the
$E^{1}$-page vanishes since $S_{\bullet}$ is exact (as a complex of complexes)
and so the individual sequences of $k$-vector spaces $S_{\bullet}^{i}$ for
constant $i$ are exact, so $E^{\infty}=E^{1}=0$. Then use the bicomplex
spectral sequences with differential `$\downarrow$' on the $E^{0}$-page for
our claim. It also converges to zero then; \textbf{(2)} is analogous.
\textbf{(3)} The bicomplex is horizontally supported in $[0,n+1]$.
\textbf{(4)}\ Diagram chase.
\end{proof}

We combine Lemma \ref{BTrev_LemmaConstructTWedgeComplex}\ with Lemma
\ref{BT_PropExplicitDifferentialInSpecSeq}: Apply the latter to $S_{q}%
^{p}:=\left.  ^{\wedge}T_{q}^{p}\right.  $; we denote the resulting spectral
sequence by $\left.  ^{\wedge}E_{\bullet,\bullet}^{\bullet}\right.  $. The
fact that the (bi)complex of Lemma \ref{BT_PropExplicitDifferentialInSpecSeq}
is supported horizontally in $[n+1,0]$ (homologically, i.e. for $\left.
^{\wedge}E_{\bullet,\bullet}^{\bullet}\right.  $) and $[0,n+1]$ respectively
(cohomologically, i.e. for $\left.  ^{\wedge}E_{\bullet}^{\bullet,\bullet
}\right.  $) implies that we have edge morphisms%
\begin{align*}
\rho_{1}:\left.  ^{\wedge}E_{n+1,1}^{n+1}\right.  \rightarrow\left.  ^{\wedge
}E_{n+1,1}^{1}\right.  \text{\qquad}  &  \text{and\qquad}\rho_{2}:\left.
^{\wedge}E_{0,n+1}^{1}\right.  \rightarrow\left.  ^{\wedge}E_{0,n+1}%
^{n+1}\right. \\
\wp_{1}:\left.  ^{\wedge}E_{n+1}^{0,n+1}\right.  \rightarrow\left.  ^{\wedge
}E_{1}^{0,n+1}\right.  \text{\qquad}  &  \text{and\qquad}\wp_{2}:\left.
^{\wedge}E_{1}^{n+1,1}\right.  \rightarrow\left.  ^{\wedge}E_{n+1}%
^{n+1,1}\right.  \text{.}%
\end{align*}
Next, we identify the involved objects: Using Lemma
\ref{BT_LemmaOnComputingLieHomology} we compute%
\begin{align*}
\left.  ^{\wedge}E_{0,n+1}^{1}\right.   &  =H_{n+1}(\left.  ^{\wedge
}T_{\bullet}^{0}\right.  )=H_{n+1}(CE(\mathfrak{g})_{\bullet})\cong
H_{n+1}(\mathfrak{g},k)\\
\left.  ^{\wedge}E_{n+1,1}^{1}\right.   &  =H_{1}(\left.  ^{\wedge}T_{\bullet
}^{n+1}\right.  )=H_{1}(%
%TCIMACRO{\tbigcap \nolimits_{i=1,\ldots,n}}%
%BeginExpansion
{\textstyle\bigcap\nolimits_{i=1,\ldots,n}}
%EndExpansion%
%TCIMACRO{\tbigcap \nolimits_{s_{i}\in\{\pm\}}}%
%BeginExpansion
{\textstyle\bigcap\nolimits_{s_{i}\in\{\pm\}}}
%EndExpansion
CE(I_{i}^{s_{i}})_{\bullet})=I_{\operatorname*{tr}}/[I_{\operatorname*{tr}%
},\mathfrak{g}]\\
\left.  ^{\wedge}E_{1}^{n+1,1}\right.   &  =\operatorname*{Hom}\nolimits_{k}%
(I_{\operatorname*{tr}}/[I_{\operatorname*{tr}},\mathfrak{g}],k)\qquad
\text{and}\qquad\left.  ^{\wedge}E_{1}^{0,n+1}\right.  =H^{n+1}(\mathfrak{g}%
,k)\text{.}%
\end{align*}

\begin{definition}
[\cite{MR565095}]\label{BT_Def_TateBeilinsonAbstractResidueMaps}Let
$(A,(I_{i}^{\pm}),\tau)$ be an $n$-fold cubically decomposed algebra over a
field $k$ and $\mathfrak{g}:=A_{Lie}$ its Lie algebra. Define%
\[
\operatorname*{res}\nolimits_{\ast}:H_{n+1}(\mathfrak{g},k)\rightarrow
k\qquad\operatorname*{res}\nolimits_{\ast}:=\tau\circ\rho_{1}\circ
(d_{n+1})^{-1}\circ\rho_{2}%
\]
and%
\[
\operatorname*{res}\nolimits^{\ast}:k\rightarrow H^{n+1}(\mathfrak{g}%
,k)\quad\operatorname*{res}\nolimits^{\ast}(1):=(\wp_{1}\circ(d^{n+1}%
)^{-1}\circ\wp_{2})\tau\text{,}%
\]
where for $\operatorname*{res}\nolimits^{\ast}$ we read $\tau$ as an element
of $E_{1}^{n+1,1}$. We will call $\phi:=\operatorname*{res}\nolimits^{\ast
}(1)$ the \emph{Tate extension class}.
\end{definition}

In the case $n=1$ it would also be justified to name this cohomology class
after\ Kac-Petersen \cite{MR619827}; it also appears in the works of the
Japanese school, e.g. \cite{MR723457}.

\begin{remark}
It follows from the construction of $\operatorname*{res}\nolimits_{\ast}$,
$\operatorname*{res}\nolimits^{\ast}$ that%
\begin{equation}
\operatorname*{res}\nolimits^{\ast}(\alpha)(X_{0}\wedge\ldots\wedge
X_{n})=\alpha\operatorname*{res}\nolimits_{\ast}X_{0}\wedge\ldots\wedge
X_{n}\text{.} \label{lBT_39}%
\end{equation}

\end{remark}

Now we would like to compute these maps explicitly. Clearly, the most elusive
map in the construction is the differential $d_{n+1}$ (resp. $d^{n+1}$). We
can render it explicit using Lemma \ref{BT_PropExplicitDifferentialInSpecSeq}%
.\ref{lemma_specseq_homotopypart} as soon as we have an explicit contracting
homotopy available. However, it seems to be quite difficult to construct such
a homotopy for the complex $\left.  ^{\wedge}T^{\bullet}\right.  $. On the
other hand, we \textit{do} have such a contracting homotopy for $\left.
^{\otimes}T^{\bullet}\right.  $ by Lemma \ref{BT_Prop_EstablishKeyCubeComplex}
and its corollary. Luckily for us, these complexes are closely connected. We
may apply Lemma \ref{BT_PropExplicitDifferentialInSpecSeq} also to $S_{q}%
^{p}:=\left.  ^{\otimes}T_{q-1}^{p}\right.  $; this time denote the resulting
spectral sequence by $\left.  ^{\otimes}E_{\bullet,\bullet}^{\bullet}\right.
$. We easily compute
\begin{align*}
\left.  ^{\otimes}E_{0,n+1}^{1}\right.   &  =H_{n+1}(\left.  ^{\otimes
}T_{\bullet-1}^{0}\right.  )=H_{n}(C(\mathfrak{g})_{\bullet})\cong
H_{n}(\mathfrak{g},\mathfrak{g})\\
\left.  ^{\otimes}E_{n+1,1}^{1}\right.   &  =H_{1}(\left.  ^{\otimes
}T_{\bullet-1}^{n+1}\right.  )=H_{0}(C(%
%TCIMACRO{\tbigcap \nolimits_{i=1,\ldots,n}}%
%BeginExpansion
{\textstyle\bigcap\nolimits_{i=1,\ldots,n}}
%EndExpansion%
%TCIMACRO{\tbigcap \nolimits_{s_{i}\in\{\pm\}}}%
%BeginExpansion
{\textstyle\bigcap\nolimits_{s_{i}\in\{\pm\}}}
%EndExpansion
I_{i}^{s_{i}})_{\bullet})=I_{\operatorname*{tr}}/[I_{\operatorname*{tr}%
},\mathfrak{g}]\\
\left.  ^{\otimes}E_{1}^{n+1,1}\right.   &  =\operatorname*{Hom}%
\nolimits_{k}(I_{\operatorname*{tr}}/[I_{\operatorname*{tr}},\mathfrak{g}%
],k)\qquad\text{and}\qquad\left.  ^{\otimes}E_{1}^{0,n+1}\right.
=H^{n}(\mathfrak{g},\mathfrak{g}^{\ast})\text{.}%
\end{align*}
We note that some groups even agree with their $\left.  ^{\wedge}T_{q}%
^{p}\right.  $-counterpart; as we had already observed in eq.
\ref{lBTrev_curious_identities}.

\begin{definition}
Write $\left.  ^{\otimes}\operatorname*{res}\nolimits_{\ast}\right.
:H_{n}(\mathfrak{g},\mathfrak{g})\rightarrow k$ and $\left.  ^{\otimes
}\operatorname*{res}\nolimits^{\ast}(1)\right.  \in H^{n}(\mathfrak{g}%
,\mathfrak{g}^{\ast})$ for the counterparts of $\left.  \operatorname*{res}%
\nolimits_{\ast}\right.  ,\left.  \operatorname*{res}\nolimits^{\ast}\right.
$ in Def. \ref{BT_Def_TateBeilinsonAbstractResidueMaps} using $\left.
^{\otimes}E\right.  $ instead of $\left.  ^{\wedge}E\right.  $.
\end{definition}

\begin{lemma}
[Compatibility]\label{BTrev_ComputeWedgeResViaTensorRes}The morphism of
bicomplexes $\left.  ^{\otimes}T_{\bullet}^{\bullet}\right.  \overset
{I}{\longrightarrow}\left.  ^{\wedge}T_{\bullet+1}^{\bullet}\right.  $ induces
a commutative diagram%
\[
\xymatrix{ H_{n}(\mathfrak{g},\mathfrak{g}) \ar[d] \ar[r] \ar@/^2pc/[rrr]^{\emph{comes with contracting homotopy}} & {\left. ^{\otimes }E^{n+1}_{0,n+1}\right.} \ar[d] & {\left. ^{\otimes }E^{n+1}_{n+1,1}\right.} \ar[d] \ar[l]^{d_{n+1}}_{\cong} \ar[r] & H_{0}(\mathfrak{g},\mathfrak{g}) \ar[d]^{\cong} \\ H_{n+1}(\mathfrak{g},k) \ar[r] \ar@/_2pc/[rrr]_{\emph{Beilinson's residue}} & {\left. ^{\wedge }E^{n+1}_{0,n+1}\right.} & {\left. ^{\wedge }E^{n+1}_{n+1,1}\right.} \ar[l]_{d_{n+1}}^{\cong} \ar[r] & H_{1}(\mathfrak{g},k). }
\]

\end{lemma}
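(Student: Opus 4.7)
The plan is to deduce all of the commutativity from the functoriality clause of Lemma \ref{BT_PropExplicitDifferentialInSpecSeq}.

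First, I would observe that the $+1$-degree shift in $I : \left.^{\otimes}T^{p}_{\bullet}\right. \to \left.^{\wedge}T^{p}_{\bullet+1}\right.$ from Lemma \ref{BTrev_LemmaConstructTWedgeComplex} is exactly cancelled by the indexing conventions used to feed the two bicomplexes into Lemma \ref{BT_PropExplicitDifferentialInSpecSeq}: the spectral sequence $\left.^{\otimes}E\right.$ was obtained by plugging in $S^p_q := \left.^{\otimes}T^p_{q-1}\right.$, whereas $\left.^{\wedge}E\right.$ was obtained from $S^p_q := \left.^{\wedge}T^p_q\right.$. Thus $I$ descends to an honest, degree-preserving morphism between the two bicomplexes under consideration.

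By the functoriality clause of Lemma \ref{BT_PropExplicitDifferentialInSpecSeq}, this morphism of bicomplexes induces a morphism of spectral sequences $\left.^{\otimes}E\right. \to \left.^{\wedge}E\right.$, which provides all four vertical arrows of the diagram. On the $E^1$-page the induced map is, by the functoriality of Lie homology, just the map induced by $I$ summand-by-summand, i.e.\ a shifted version of the natural map $C(I_1^{s_1}\cap\cdots)_\bullet \to CE(I_1^{s_1}\cap\cdots)_{\bullet+1}$ of eq. \ref{lBT_revLieImap}. Specialising to $(p,q) = (0,n+1)$ one recovers the map $I : H_n(\mathfrak{g}, \mathfrak{g}) \to H_{n+1}(\mathfrak{g}, k)$, i.e.\ the leftmost vertical arrow. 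Specialising to $(p,q) = (n+1,1)$ one recovers the canonical isomorphism $H_0(\mathfrak{g}, I_{\operatorname{tr}}) \xrightarrow{\cong} H_1(CE(I_{\operatorname{tr}}))$ of eq. \ref{lBTrev_curious_identities}, i.e.\ the rightmost vertical arrow. The two middle vertical arrows are then, tautologically, the maps induced by the same morphism of spectral sequences on the $E^{n+1}$-page.

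Commutativity of the central square is now immediate, since $d_{n+1}$ is part of the spectral sequence datum and is respected by any morphism of spectral sequences. The two outer squares commute because the edge maps $\rho_1$ (an inclusion onto a subobject) and $\rho_2$ (a surjective quotient map) come from the canonical filtration on the associated total complex, and are natural with respect to morphisms of filtered bicomplexes. The only real subtlety is the bookkeeping in the first step: correctly absorbing the degree shift between $\left.^{\otimes}T\right.$ and $\left.^{\wedge}T\right.$ so that $I$ qualifies as a morphism of spectral sequences in the sense of Lemma \ref{BT_PropExplicitDifferentialInSpecSeq}. Once that is settled, everything else is forced by functoriality and no computation remains.
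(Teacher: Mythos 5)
Your proposal is correct and follows essentially the same route as the paper: both deduce everything from the functoriality clause of Lemma \ref{BT_PropExplicitDifferentialInSpecSeq}, with the degree shift absorbed by the convention $S^p_q := \left.^{\otimes}T^p_{q-1}\right.$, so that the induced morphism of spectral sequences gives the middle square (compatibility with $d_{n+1}$) and the outer squares (naturality of the edge maps). Your explicit identification of the vertical maps on the $E^1$-page is a slightly more detailed bookkeeping than the paper records, but it is the same argument.
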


\begin{proof}
We had already observed in Lemma \ref{BTrev_LemmaConstructTWedgeComplex} that
the morphisms $I$ induce a morphism of bicomplexes. The spectral sequences
$\left.  ^{\otimes}E_{\bullet,\bullet}^{\bullet}\right.  $ and $\left.
^{\wedge}E_{\bullet,\bullet}^{\bullet}\right.  $ both arise from Lemma
\ref{BT_PropExplicitDifferentialInSpecSeq}, so by the functoriality of the
construction we get an induced morphism of spectral sequences. In particular,
all squares%
\[
\xymatrix{ {\left. ^{\otimes }E^{r}_{p,q}\right.} \ar[r]^-{d_{r}} \ar[d] & {\left. ^{\otimes }E^{r}_{p-r,q+r-1}\right.} \ar[d] \\ {\left. ^{\wedge }E^{r}_{p,q}\right.} \ar[r]_-{d_{r}} & {\left. ^{\wedge }E^{r}_{p-r,q+r-1}\right.} }
\]
commute, giving the middle square in our claim. The same applies to the edge
maps, giving the outer squares.
\end{proof}

Absolutely analogously we obtain a cohomological counterpart,%
\[
\xymatrix{
H^{1}(\mathfrak{g},k) \ar[r] \ar[d]_{\cong} &
H^{n+1}(\mathfrak{g},k) \ar[d] \\
H^{0}(\mathfrak{g},\mathfrak{g}^{\ast }) \ar[r] &
H^{n}(\mathfrak{g},\mathfrak{g}^{\ast }),
}
\]
where we have a contracting homotopy for the lower row. We leave the details
of this formulation to the reader.

\section{\label{section_ConcreteFormalism}Concrete Formalism}

Let $(A,(I_{i}^{\pm}),\tau)$ be an $n$-fold cubically decomposed algebra over
a field $k$. In \S \ref{section_CubeComplexII} we have constructed a canonical
morphism%
\[%
\begin{array}
[c]{cccc}%
\operatorname*{res}\nolimits_{\ast}: & H_{n+1}(\mathfrak{g},k) &
\longrightarrow & k\\
& \uparrow &  & \\
& H_{n}(\mathfrak{g},\mathfrak{g})\text{,} &  &
\end{array}
\]
where $\mathfrak{g}:=A_{Lie}$ is the Lie algebra associated to $A$. By Lemma
\ref{BTrev_ComputeWedgeResViaTensorRes}, its values on the image of
$H_{n}(\mathfrak{g},\mathfrak{g})\rightarrow H_{n+1}(\mathfrak{g},k)$ can be
computed via $\left.  ^{\otimes}\operatorname*{res}\nolimits_{\ast}\right.  $.
In this section we will obtain an explicit formula for the latter morphism.

Given the definition of $\left.  ^{\otimes}\operatorname*{res}\nolimits_{\ast
}\right.  $, Lemma \ref{BT_PropExplicitDifferentialInSpecSeq}%
.\ref{lemma_specseq_homotopypart} tells us that it can be given explicitly in
terms of differentials of the ordinary Chevalley-Eilenberg complexes
$C(-)_{\bullet}$ (cf. \S \ref{section_CEComplexes}) and contracting homotopies
of the cube complex $N^{\bullet}$ (cf. Lemma
\ref{BT_Prop_EstablishKeyCubeComplex} and its corollary), namely%
\begin{equation}
\left.  ^{\otimes}\operatorname*{res}\nolimits_{\ast}\right.  =\tau\circ
\rho_{1}\circ(^{\otimes}d_{n+1})^{-1}\circ\rho_{2}=\tau\circ\rho_{1}H%
%TCIMACRO{\tprod \nolimits_{i=1,\ldots,n}}%
%BeginExpansion
{\textstyle\prod\nolimits_{i=1,\ldots,n}}
%EndExpansion
(\delta_{i}H)\rho_{2} \label{lBTA_20}%
\end{equation}
via the spectral sequence $\left.  ^{\otimes}E_{\bullet,\bullet}^{\bullet
}\right.  $. The contracting homotopy $H$ depends on the choice of a good
system of idempotents, see Def. \ref{BT_Def_IdempotentsForA}. Different
choices will yield formulas that may look different, but as $\left.
^{\otimes}\operatorname*{res}\nolimits_{\ast}\right.  $ (just like $\left.
\operatorname*{res}\nolimits_{\ast}\right.  $ itself) was defined entirely
independently of the choice of any idempotents, all such formulas actually
must agree.

Suppose a representative $\theta:=f_{0}\otimes f_{1}\ldots\wedge f_{n}$ with
$f_{0},\ldots,f_{n}\in N^{0}$ is given (note that $N^{0}$ equals
$\mathfrak{g}$ as a left-$U\mathfrak{g}$-module by definition, so it is valid
to treat all $f_{i}$ on equal footing). We shall compute $\left.  ^{\otimes
}\operatorname*{res}\nolimits_{\ast}\right.  \theta$ in several steps,
starting with $\theta_{0,n}:=\rho_{2}\theta$, then following%
\begin{equation}%
\begin{tabular}
[c]{ccccccc|c}
&  &  &  &  &  & $0$ & \\
&  &  &  &  &  & $\mid$ & \\
&  &  &  & $\theta_{1,n}$ & $\overset{H}{\longleftarrow}$ & $\theta_{0,n}$ &
$n$\\
&  &  &  & $\vdots$ &  &  & $\vdots$\\
&  & $\theta_{n,1}$ & $\overset{H}{\longleftarrow}$ & $\theta_{n-1,1}$ &  &  &
$1$\\
&  & $\downarrow$ &  &  &  &  & \\
$\theta_{n+1,0}$ & $\overset{H}{\longleftarrow}$ & $\theta_{n,0}$ &  &  &  &
& $0$\\\hline
$n+1$ &  & $n$ &  & $n-1$ & $\cdots$ & $0$ &
\end{tabular}
\ \ \qquad%
\begin{array}
[c]{ccc}
&  & q\\
&  & \uparrow\\
p & \leftarrow & +
\end{array}
\label{BT_FigLiftingTheThetaElements}%
\end{equation}
as prescribed by eq. \ref{lBTA_20}. This graphical arrangement elucidates the
position of the term of each step in the computation in the spectral sequence
from which eq. \ref{lBTA_20} originates, see Lemma
\ref{BT_PropExplicitDifferentialInSpecSeq}. However, for us each $\theta
_{\ast,\ast}$ will be an $E^{0}$-page representative of the respective
$E^{\ast}$-page term. Finally $\left.  ^{\otimes}\operatorname*{res}%
\nolimits_{\ast}\right.  \theta=\tau\rho_{1}\theta_{n+1,0}$. We note that
$\rho_{1},\rho_{2}$ are just edge maps, i.e. an inclusion of a subobject and a
quotient surjection. Hence, as we work with explicit representatives anyway,
the operation of these maps is essentially invisible (e.g. in the quotient
case it just means that our representative generates a larger equivalence class).

We will need a convenient notation for elements of this complex.\newline%
\textit{(Notation A)} We will write $\theta_{p,q-p\mid s_{1}\ldots s_{n}%
}^{w_{1}\ldots w_{p}}\in N^{p}$ for the summands in any expression of the
shape%
\begin{equation}
\theta_{p,q-p}=\sum_{\substack{w_{1}\ldots w_{p}\\\in\{1,\ldots,n\}}%
}\sum_{s_{1}\ldots s_{n}}\theta_{p,q-p\mid s_{1}\ldots s_{n}}^{w_{1}\ldots
w_{p}}\otimes f_{1}\wedge\ldots\wedge\widehat{f_{w_{1}}}\wedge\ldots
\wedge\widehat{f_{w_{p}}}\wedge\ldots\wedge f_{n}\text{,} \label{lBTA_15}%
\end{equation}
where

\begin{itemize}
\item $(p,q-p)$ denotes the location of the element in the bicomplex as in
fig. \ref{BT_FigLiftingTheThetaElements},

\item $s_{1},\ldots,s_{n}\in\{0,+,-\}$ denotes the component (= direct
summand) of $N^{p}$ as in eq. \ref{lBTA_12}, $f_{1},\ldots,f_{n}%
\in\mathfrak{g}$,

\item the additional superscripts $w_{1},\ldots,w_{p}\in\{1,\ldots,n\}$ are
used to indicate the omission of wedge factors.
\end{itemize}

Note that the values $\theta_{p,q\mid s_{1}\ldots s_{n}}^{w_{1}\ldots w_{p}}$
are not necessarily uniquely determined since the individual wedge tails need
not be linearly independent.\newline\textit{(Notation B)} We also need a
shorthand for the summands in any expression of the shape%

\begin{align}
\theta_{p,q-p-1}  &  =\sum_{\substack{w_{1}\ldots w_{p},w_{a},w_{b}%
\\\in\{1,\ldots,n\}}}\sum_{s_{1}\ldots s_{n}}\theta_{p,q\mid s_{1}\ldots
s_{n}}^{w_{1}\ldots w_{p}\parallel w_{a},w_{b}}\label{lBTA_21}\\
&  \otimes\lbrack f_{w_{a}},f_{w_{b}}]\wedge f_{1}\wedge\ldots\widehat
{f_{w_{1}}}\ldots\widehat{f_{w_{a}}}\ldots\widehat{f_{w_{b}}}\ldots
\widehat{f_{w_{p}}}\ldots\wedge f_{n}\text{.}\nonumber
\end{align}
Again $s_{1},\ldots,s_{n}$ denotes the component in $N^{p}$, $w_{1}%
,\ldots,w_{p}$ omitted wedge factors. Moreover, $w_{a}$ and $w_{b}$ denote two
additional omitted wedge factors and simultaneously indicate that $[f_{w_{a}%
},f_{w_{b}}]$ appears as an additional wedge factor. As for the previous
notation, the elements $\theta_{p,q\mid s_{1}\ldots s_{n}}^{w_{1}\ldots
w_{p}\parallel w_{a},w_{b}}\in N^{p}$ are not uniquely determined. We will
explain how these expressions arise soon.

\textit{Combinatorial Preparation:} We define for arbitrary $1\leq p\leq n$
and $w_{1},\ldots,w_{p}\in\{1,\ldots,n\}$ the `sign function' (a
generalization of the signum of a permutation)%
\begin{equation}
\rho(w_{1},\ldots,w_{p}):=\left(  -1\right)  ^{\sum_{k=1}^{p}\sum_{j<k}%
\delta_{w_{j}<w_{k}}}\text{.} \label{lBTA_19}%
\end{equation}
By abuse of language we do not carry the value $p$ in the notation for $\rho$
as it will always be clear from the number of arguments which variant is used.
It is easy to see that $\rho(w_{1})=+1$ and $\rho(w_{1},w_{2})=(-1)^{\delta
_{w_{1}<w_{2}}}$. For $p=n$ we have%
\begin{equation}
\rho(w_{1},\ldots,w_{n})=\operatorname*{sgn}\left(
\begin{array}
[c]{ccc}%
1 & \cdots & n\\
w_{1} & \cdots & w_{n}%
\end{array}
\right)  \text{.} \label{lBTA_22}%
\end{equation}
We shall need the inductive formula (which is easy to check by induction)%
\begin{equation}
(-1)^{\#\{w_{i}\mid1\leq i\leq p\text{ s.t. }w_{i}<w_{p+1}\}}\rho(w_{1}%
,\ldots,w_{p})=\rho(w_{1},\ldots,w_{p+1})\text{.} \label{lBTA_18}%
\end{equation}

\begin{proposition}
\label{BT_KeyLemmaFormula}Suppose $\theta:=f_{0}\otimes f_{1}\wedge
\ldots\wedge f_{n}$ with $f_{i}\in N_{0}=\mathfrak{g}$. Moreover, suppose
$P_{1}^{+},\ldots,P_{n}^{+}$ is a good system of idempotents as in Def.
\ref{BT_Def_IdempotentsForA}. Then for every $p\geq0$ the element
$\theta_{p+1,q}$ is of the shape as in eq. \ref{lBTA_15} and for $\gamma
_{1}\ldots\gamma_{n-p}\in\{+,-\}$ we have%
\begin{align*}
\theta_{p+1,q\mid\gamma_{1}\ldots\gamma_{n-p}\underset{p}{\underbrace
{0\ldots0}}}^{w_{1}\ldots w_{p}}  &  =(-1)^{\sum_{u=1}^{p-1}(u+1)}\left(
-1\right)  ^{w_{1}+\cdots+w_{p}}\rho(w_{1},\ldots,w_{p})\\
&  \left(  -1\right)  ^{\gamma_{1}+\cdots+\gamma_{n-p}}P_{1}^{\gamma_{1}%
}\cdots P_{n-p}^{\gamma_{n-p}}\\
&
%TCIMACRO{\tsum _{\gamma_{n-p+1}^{\ast}\ldots\gamma_{n}^{\ast}\in\{\pm\}}}%
%BeginExpansion
{\textstyle\sum_{\gamma_{n-p+1}^{\ast}\ldots\gamma_{n}^{\ast}\in\{\pm\}}}
%EndExpansion
\left(  -1\right)  ^{\gamma_{n-p+1}^{\ast}+\cdots+\gamma_{n}^{\ast}}\\
&  \left(  P_{n-p+1}^{\left(  -\gamma_{n-p+1}^{\ast}\right)  }%
\operatorname*{ad}(f_{w_{p}})P_{n-p+1}^{\gamma_{n-p+1}^{\ast}}\right) \\
&  \cdots\left(  P_{n}^{\left(  -\gamma_{n}^{\ast}\right)  }\operatorname*{ad}%
(f_{w_{1}})P_{n}^{\gamma_{n}^{\ast}}\right)  f_{0}\text{.}%
\end{align*}
Here $\rho(w_{1},\ldots,w_{p})$ is the sign function defined in eq.
\ref{lBTA_19}. For $p=0$ the expression $\rho(w_{1},\ldots,w_{p})$ and the
whole sum $(\Sigma_{\{\pm\}}(\cdots))$ in $(\Sigma_{\{\pm\}}(\cdots))f_{0}$
should be read as $+1$ (giving the right-hand side of eq. \ref{lBTA_29} below).
\end{proposition}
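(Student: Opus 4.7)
The plan is to proceed by induction on $p$, tracking how the formula updates under the alternating applications of the Chevalley--Eilenberg differential $\delta$ and the contracting homotopy $H$ prescribed by eq.~\ref{lBTA_20} and fig.~\ref{BT_FigLiftingTheThetaElements}.

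For the base case $p=0$, I would start from $\theta_{0,n}=\rho_{2}\theta = f_{0}\otimes f_{1}\wedge\cdots\wedge f_{n}$ with $f_{0}\in N^{0}$, and apply the homotopy $\hat H$ of eq.~\ref{lBTA_14}. Since $\hat H$ acts only on the $N^{0}$-factor, this immediately yields $\theta_{1,n\mid\gamma_{1}\ldots\gamma_{n}} = (-1)^{\gamma_{1}+\cdots+\gamma_{n}}P_{1}^{\gamma_{1}}\cdots P_{n}^{\gamma_{n}}f_{0}$, which is exactly the claimed formula with the stated conventions (the starred sum and $\rho(w_{1},\ldots,w_{0})$ both read as $+1$).

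For the inductive step, I would assume the formula for $\theta_{p+1,q}$ and compute $\theta_{p+2,q-1}=H\delta\,\theta_{p+1,q}$ by decomposing $\delta=\delta^{[1]}+\delta^{[2]}$ via eq.~\ref{lBT_CEComplexDifferential}. The $\delta^{[1]}$-part picks an unomitted index $w_{p+1}$ and brings $f_{w_{p+1}}$ forward as a Lie bracket with the $N^{p+1}$-valued head, producing a new $\operatorname{ad}(f_{w_{p+1}})$ action on the coefficient (up to a sign tracked below). Subsequently applying the explicit form of $H$ (eq.~\ref{lBTA_17}), whose leftmost zero shifts from slot $n-p+1$ to slot $n-p$, inserts exactly one new conjugated factor $P_{n-p}^{-\gamma^{*}_{n-p}}\operatorname{ad}(f_{w_{p+1}})P_{n-p}^{\gamma^{*}_{n-p}}$ on the outside of the previous product, together with an extra sum over $\gamma^{*}_{n-p}\in\{\pm\}$ weighted by $(-1)^{\gamma^{*}_{n-p}}$. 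The $\delta^{[2]}$-part produces Notation~B summands in which only the wedge tail is modified while the $N^{p+1}$-coefficient is left unchanged; these must be shown to vanish after $H$.

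The crucial cancellation here is the hard part: since $\theta_{p+1,q}$ was itself produced by an application of $H$ in the previous step, its coefficient lies in $\operatorname{Im} H$ (or $\operatorname{Im}\hat H$ at the very first step), so applying $H$ once more gives zero by the identity $H^{2}=0$ of eq.~\ref{lBTA_35} (the referee's observation); concretely this reduces to the orthogonality $P_{b+1}^{-\gamma}P_{b+1}^{\gamma}=0$ that appears inside the component formula of $H\circ H$. Making this precise requires working with the telescoping $H=H_{1}+\varepsilon_{1}H_{2}+\cdots+\varepsilon_{1}\cdots\varepsilon_{n-1}H_{n}$ and verifying componentwise that each Notation~B contribution lands in the kernel of the next $H$. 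The remaining work is then sign bookkeeping: the new $(-1)^{p+1}$ in the prefactor $(-1)^{\sum_{u=1}^{p}(u+1)}$ arises from the $(-1)^{\deg(s_{1}\ldots s_{n})}$ in eq.~\ref{lBTA_17}, the new $(-1)^{w_{p+1}}$ comes from the position-dependent sign in $\delta^{[1]}$ combined with the reshuffling of the wedge tail, and the update $\rho(w_{1},\ldots,w_{p+1}) = (-1)^{\#\{i\leq p\,\mid\,w_{i}<w_{p+1}\}}\rho(w_{1},\ldots,w_{p})$ via eq.~\ref{lBTA_18} absorbs the remaining combinatorial shuffle arising from restoring the wedge tail to ascending order.
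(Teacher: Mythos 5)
Your plan is essentially the paper's own proof: induction on $p$ with the base case read off from $\hat H$ (eq. \ref{lBTA_14}), the $\delta^{[1]}$-contribution combined with the component formula \ref{lBTA_17} for $H$ producing exactly one new outer factor $P_{n-p}^{-\gamma}\operatorname{ad}(f_{w_{p+1}})P_{n-p}^{\gamma}$ (after the sum $\sum_{\gamma_{1}\ldots\gamma_{n-p-1}}P_{1}^{\gamma_{1}}\cdots P_{n-p-1}^{\gamma_{n-p-1}}$ collapses to the identity), the $\delta^{[2]}$-contribution annihilated in the relevant components by the idempotent orthogonality $P_{n-p}^{-\gamma}P_{n-p}^{\gamma}=0$ that underlies $H^{2}=0$, and the combinatorics absorbed by eq. \ref{lBTA_18}. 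The only nitpick is a sign attribution: eq. \ref{lBTA_17} contributes $(-1)^{\deg}=(-1)^{p+2}\equiv(-1)^{p}$ rather than $(-1)^{p+1}$, the missing $-1$ coming from rewriting $(-1)^{w_{p+1}}[f_{0},f_{w_{p+1}}]$ as $(-1)^{w_{p+1}+1}\operatorname{ad}(f_{w_{p+1}})f_{0}$, but your total sign agrees with the claim.
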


\begin{itemize}
\item Note that no terms of the shape as in eq. \ref{lBTA_21} appear. This is
not entirely obvious in view of the definition of $\delta^{\lbrack2]}$, see
eq. \ref{lBT_CEComplexDifferential}.

\item The formula does not compute $\theta_{p+1,q\mid s_{1}\ldots s_{n}%
}^{w_{1}\ldots w_{p}}$ for arbitrary $s_{1}\ldots s_{n}$ of degree $p+1$. This
is due to the fact that we only have further use for the ones treated.

\item For $p\leq1$ read $\sum_{u=1}^{p-1}(u+1)$ as zero.
\end{itemize}

\begin{proof}
We prove this by induction. For $p=0$ the claim reads%
\begin{equation}
\theta_{1,q\mid\gamma_{1}\ldots\gamma_{n}}=\left(  -1\right)  ^{\gamma
_{1}+\cdots+\gamma_{n}}P_{1}^{\gamma_{1}}\cdots P_{n}^{\gamma_{n}}f_{0}
\label{lBTA_29}%
\end{equation}
and in view of eq. \ref{lBTA_14} this proves the claim in this case. Now we
proceed by induction. Assume the case $p$ is settled, i.e. in the notation of
eq. \ref{lBTA_15} $\theta_{p+1,q\mid\gamma_{1}\ldots\gamma_{n-p}\underset
{p}{\underbrace{0\ldots0}}}^{w_{1}\ldots w_{p}}$ is exactly as in our claim.
Next, we need to apply the differential $\delta_{q}=\delta_{q}^{[1]}%
+\delta_{q}^{[2]}$ of the Chevalley-Eilenberg resolution, see eq.
\ref{lBT_CEComplexDifferential}. The contribution of $\delta_{q}^{[1]}$ will
be relevant, but for $\delta_{q}^{[2]}$ we shall see that (after applying the
next contracting homotopy) the contribution vanishes.\ We treat each
$\delta^{\lbrack i]}$, $i=1,2$ separately:\newline\textbf{(1)} Consider
$\delta_{q}^{[1]}$ in eq. \ref{lBT_CEComplexDifferential}. The sum $\Sigma
_{i}$ \textit{loc. cit.} maps components indexed by $w_{1},\ldots,w_{p}$ to
components of $\delta^{\lbrack1]}\theta_{p,q}$, indexed by $w_{1},\ldots
,w_{p}$ and an additional $w_{p+1}\in\{1,\ldots,n\}\setminus\{w_{1}%
,\ldots,w_{p}\}$ -- they correspond to the summands of $\delta^{\lbrack
1]}\theta_{p,q}$ and to the additional omitted wedge factor respectively.
Moreover, the formula imposes signs $(-1)^{i+1}$, but here $i$ depends on the
numbering of the wedges $(\ldots\wedge\ldots\wedge\ldots)$. In the notation of
eq. \ref{lBTA_15} the subscript $j$ of $f_{j}$ does not necessarily indicate
the $f_{j}$ sits in the $j$-th wedge, due to the possible omission of wedge
factors $f_{w_{1}},\ldots,f_{w_{p}}$ on the left-hand side of it. To
compensate for that in the following computation the term $(-1)^{\#\{w_{i}%
\mid1\leq i\leq p\text{ s.t. }w_{i}<w_{p+1}\}}$ appears, sign-counting the
omission on the left of the new-to-be-omitted $w_{p+1}$ in the component of
$\delta^{\lbrack1]}\theta_{p+1,q}$. As $p$ remains constant, the indexing
$\gamma_{1}\ldots\gamma_{n-p}0\ldots0$ remains unaffected. We get for
$(\delta^{\lbrack1]}\theta_{p+1,q})_{p+1,q-1\mid\gamma_{1}\ldots\gamma
_{n-p}\underset{p}{\underbrace{0\ldots0}}}^{w_{1}\ldots w_{p}w_{p+1}}$ the
expression%
\begin{align*}
&  =(-1)^{\sum_{u=1}^{p-1}(u+1)}(-1)^{w_{p+1}+1}(-1)^{\#\{w_{i}\mid1\leq i\leq
p\text{ s.t. }w_{i}<w_{p+1}\}}\operatorname*{ad}(f_{w_{p+1}})\\
&  \left(  -1\right)  ^{w_{1}+\cdots+w_{p}}\rho(w_{1},\ldots,w_{p})\\
&  \left(  -1\right)  ^{\gamma_{1}+\cdots+\gamma_{n-p}}P_{1}^{\gamma_{1}%
}\cdots P_{n-p}^{\gamma_{n-p}}\\
&
%TCIMACRO{\tsum _{\gamma_{n-p+1}^{\ast}\ldots\gamma_{n}^{\ast}\in\{\pm\}}}%
%BeginExpansion
{\textstyle\sum_{\gamma_{n-p+1}^{\ast}\ldots\gamma_{n}^{\ast}\in\{\pm\}}}
%EndExpansion
\left(  -1\right)  ^{\gamma_{n-p+1}^{\ast}+\cdots+\gamma_{n}^{\ast}}\\
&  \left(  P_{n-p+1}^{\left(  -\gamma_{n-p+1}^{\ast}\right)  }%
\operatorname*{ad}(f_{w_{p}})P_{n-p+1}^{\gamma_{n-p+1}^{\ast}}\right)
\cdots\left(  P_{n}^{\left(  -\gamma_{n}^{\ast}\right)  }\operatorname*{ad}%
(f_{w_{1}})P_{n}^{\gamma_{n}^{\ast}}\right)  f_{0}\text{.}%
\end{align*}
Next, we need to apply the contracting homotopy $H:N^{p+1}\rightarrow N^{p+2}%
$. Note that we have $p+1\geq1$, so eq. \ref{lBTA_17} applies. Note that for
indices $\gamma_{1}^{\dag}\ldots\gamma_{n-p-1}^{\dag}\underset{p+1}%
{\underbrace{0\ldots0}}$ with $\gamma_{1}^{\dag}\ldots\gamma_{n-p-1}^{\dag}%
\in\{\pm\}$ (i.e. indices of degree $p+2$, cf. eq. \ref{lBTA_12}) the index
$\gamma_{1}^{\dag}\ldots\gamma_{n-p-1}^{\dag}\underset{p}{\underbrace
{0\ldots0}}$ has degree $p+1$. The latter have been computed above. We obtain
for%
\[
(H\delta^{\lbrack1]}\theta_{p+1,q})_{p+2,q-1\mid\gamma_{1}^{\dag}\ldots
\gamma_{n-p-1}^{\dag}\underset{p+1}{\underbrace{0\ldots0}}}^{w_{1}\ldots
w_{p}w_{p+1}}%
\]
the expression
\begin{align*}
&  =(-1)^{p}(-1)^{\gamma_{1}^{\dag}+\cdots+\gamma_{n-p-1}^{\dag}}P_{1}%
^{\gamma_{1}^{\dag}}\cdots P_{n-p-1}^{\gamma_{n-p-1}^{\dag}}\\
&
%TCIMACRO{\tsum _{\gamma_{1},\ldots,\gamma_{(n-p-1)+1}\in\{\pm\}}}%
%BeginExpansion
{\textstyle\sum_{\gamma_{1},\ldots,\gamma_{(n-p-1)+1}\in\{\pm\}}}
%EndExpansion
(-1)^{\gamma_{1}+\cdots+\gamma_{n-p-1}}P_{(n-p-1)+1}^{-\gamma_{(n-p-1)+1}}\\
&  (\delta\theta_{p+1,q})_{p+1,q-1\mid\gamma_{1}\cdots\gamma_{n-p}\underset
{p}{\underbrace{0\ldots0}}}^{w_{1}\ldots w_{p+1}}\text{.}%
\end{align*}
In principle the first factor is $\left(  -1\right)  ^{\deg(\ldots
)}=(-1)^{p+2}$, but switching to $p$ preserves the correct sign. Next, we
expand this using our previous computation and obtain (by noting that many
signs are squares and thus $+1$)%
\begin{align*}
&  =(-1)^{\sum_{u=1}^{p-1}(u+1)}\left(  -1\right)  ^{p+1}\\
&  (-1)^{\gamma_{1}^{\dag}+\cdots+\gamma_{n-p-1}^{\dag}}(-1)^{\#\{w_{i}%
\mid1\leq i\leq p\text{ s.t. }w_{i}<w_{p+1}\}}\\
&  \left(  -1\right)  ^{w_{1}+\cdots+w_{p+1}}\rho(w_{1},\ldots,w_{p}%
)P_{1}^{\gamma_{1}^{\dag}}\cdots P_{n-p-1}^{\gamma_{n-p-1}^{\dag}}%
%TCIMACRO{\tsum _{\gamma_{n-p}\in\{\pm\}}}%
%BeginExpansion
{\textstyle\sum_{\gamma_{n-p}\in\{\pm\}}}
%EndExpansion
(-1)^{\gamma_{n-p}}\\
&  \left(
%TCIMACRO{\tsum _{\gamma_{1},\ldots,\gamma_{n-p-1}\in\{\pm\}}}%
%BeginExpansion
{\textstyle\sum_{\gamma_{1},\ldots,\gamma_{n-p-1}\in\{\pm\}}}
%EndExpansion
P_{1}^{\gamma_{1}}\cdots P_{n-p-1}^{\gamma_{n-p-1}}\right)  P_{n-p}%
^{-\gamma_{n-p}}\operatorname*{ad}(f_{w_{p+1}})P_{n-p}^{\gamma_{n-p}}\\
&
%TCIMACRO{\tsum _{\gamma_{n-p+1}^{\ast}\ldots\gamma_{n}^{\ast}\in\{\pm\}}}%
%BeginExpansion
{\textstyle\sum_{\gamma_{n-p+1}^{\ast}\ldots\gamma_{n}^{\ast}\in\{\pm\}}}
%EndExpansion
\left(  -1\right)  ^{\gamma_{n-p+1}^{\ast}+\cdots+\gamma_{n}^{\ast}}\\
&  \left(  P_{n-p+1}^{\left(  -\gamma_{n-p+1}^{\ast}\right)  }%
\operatorname*{ad}(f_{w_{p}})P_{n-p+1}^{\gamma_{n-p+1}^{\ast}}\right)
\cdots\left(  P_{n}^{\left(  -\gamma_{n}^{\ast}\right)  }\operatorname*{ad}%
(f_{w_{1}})P_{n}^{\gamma_{n}^{\ast}}\right)  f_{0}\text{.}%
\end{align*}
The sum in parantheses is the identity since for all $i$ we have $P_{i}%
^{+}+P_{i}^{-}=\mathbf{1}$ by Def. \ref{BT_Def_IdempotentsForA}. Up to the
naming of the indices, and after using eq. \ref{lBTA_18}, this is exactly our
claim in the case $p+1$ (and this is true despite the fact that we have only
considered $\delta^{\lbrack1]}$ so far -- because we shall next show that the
contribution from $H\circ\delta^{\lbrack2]}$ vanishes).\newline\textbf{(2)}
Consider $\delta_{q}^{[2]}$ in eq. \ref{lBT_CEComplexDifferential}. Using the
notation of eq. \ref{lBTA_15} we may write%
\[
\theta_{p+1,q}=%
%TCIMACRO{\tbigoplus \nolimits_{\deg(s_{1}\ldots s_{n})=p+1}}%
%BeginExpansion
{\textstyle\bigoplus\nolimits_{\deg(s_{1}\ldots s_{n})=p+1}}
%EndExpansion%
%TCIMACRO{\tsum \nolimits_{\substack{w_{1}\ldots w_{p}\\\in\{1,\ldots
%,n\}\text{,}\\\text{pairw. diff.}}}}%
%BeginExpansion
{\textstyle\sum\nolimits_{\substack{w_{1}\ldots w_{p}\\\in\{1,\ldots
,n\}\text{,}\\\text{pairw. diff.}}}}
%EndExpansion
\theta_{p+1,q\mid s_{1}\ldots s_{n}}^{w_{1}\ldots w_{p}}\otimes f_{1}%
\wedge\widehat{f_{w_{1}}}\ldots\widehat{f_{w_{p}}}\wedge f_{n}%
\]
Therefore $\delta^{\lbrack2]}\theta_{p+1,q}$ equals%
\begin{align*}
&  =%
%TCIMACRO{\tbigoplus \nolimits_{\deg(s_{1}\ldots s_{n})=p+1}}%
%BeginExpansion
{\textstyle\bigoplus\nolimits_{\deg(s_{1}\ldots s_{n})=p+1}}
%EndExpansion%
%TCIMACRO{\tsum \nolimits_{\substack{w_{1}\ldots w_{p}\\\in\{1,\ldots
%,n\}\text{,}\\\text{pairw. diff.}}}}%
%BeginExpansion
{\textstyle\sum\nolimits_{\substack{w_{1}\ldots w_{p}\\\in\{1,\ldots
,n\}\text{,}\\\text{pairw. diff.}}}}
%EndExpansion%
%TCIMACRO{\tsum \nolimits_{\substack{w_{p+1}<w_{p+2}\\\in\{1,\ldots
%,n\}\setminus\{w_{1},\ldots,w_{p}\}}}}%
%BeginExpansion
{\textstyle\sum\nolimits_{\substack{w_{p+1}<w_{p+2}\\\in\{1,\ldots
,n\}\setminus\{w_{1},\ldots,w_{p}\}}}}
%EndExpansion
(-1)^{w_{p+1}+w_{p+2}}\\
&  (-1)^{\#\{w_{i}\mid1\leq i\leq p\text{ s.t. }w_{i}<w_{p+1}\}}%
(-1)^{\#\{w_{i}\mid1\leq i\leq p\text{ s.t. }w_{i}<w_{p+2}\}}\\
&  \theta_{p+1,q\mid s_{1}\ldots s_{n}}^{w_{1}\ldots w_{p}}\otimes\lbrack
f_{w_{p+1}},f_{w_{p+2}}]\wedge f_{1}\wedge\widehat{f_{w_{1}}}\ldots
\widehat{f_{w_{p+1}}}\ldots\widehat{f_{w_{p+2}}}\ldots\widehat{f_{w_{p}}%
}\wedge f_{n}\text{.}%
\end{align*}
The two terms $(-1)^{\#\{w_{i}\mid1\leq i\leq p\text{ s.t. }w_{i}<w_{p+1}\}}$
(and with $w_{i}<w_{p+2}$ respectively) appear since the original summand in
$\delta^{\lbrack2]}$ carries the sign $(-1)^{i+j}$, so we need to compute the
number of the wedge slot correctly, respecting the omitted wedge
factors;\ compare with the discussion in the first part of this proof. We
observe that the first wedge factor remains unchanged under $\delta
^{\lbrack2]}$. Hence, when we apply the contracting homotopy $H$ in this
induction step and in the next again, the summand will vanish thanks to
$H^{2}=0$, cf. eq. \ref{lBTA_35}. It will not do harm to verify this
explicitly: We use the notation of eq. \ref{lBTA_21} and write the above in
terms of%
\begin{align*}
(\delta^{\lbrack2]}\theta_{p+1,q})_{p+1,q-1\mid s_{1}\ldots s_{n}}%
^{w_{1}\ldots w_{p}\parallel w_{p+1},w_{p+2}}  &  =(-1)^{w_{p+1}+w_{p+2}%
}(-1)^{\#\{w_{i}\mid1\leq i\leq p\text{ s.t. }w_{i}<w_{p+1}\}}\\
&  (-1)^{\#\{w_{i}\mid1\leq i\leq p\text{ s.t. }w_{i}<w_{p+2}\}}%
\theta_{p+1,q\mid s_{1}\ldots s_{n}}^{w_{1}\ldots w_{p}}\text{.}%
\end{align*}
Next, we apply $H:N^{p+1}\rightarrow N^{p+2}$ (see eq. \ref{lBTA_17}):\ Then
for indices $s_{1}\ldots s_{n}=\gamma_{1}^{\dag}\ldots\gamma_{n-p-1}^{\dag
}0\ldots0$ and $\gamma_{1}^{\dag}\ldots\gamma_{n-p-1}^{\dag}\in\{\pm\}$ (which
is of degree $p+2$) we obtain the expression
\begin{align*}
&  (H\delta^{\lbrack2]}\theta_{p+1,q})_{p+2,q-1\mid\gamma_{1}^{\dag}%
\ldots\gamma_{n-p-1}^{\dag}\underset{p+1}{\underbrace{0\ldots0}}}^{w_{1}\ldots
w_{p}\parallel w_{p+1},w_{p+2}}\\
&  =P_{1}^{\gamma_{1}^{\dag}}\cdots P_{n-p-1}^{\gamma_{n-p-1}^{\dag}}%
%TCIMACRO{\tsum _{\gamma_{1},\ldots,\gamma_{n-p}\in\{\pm\}}}%
%BeginExpansion
{\textstyle\sum_{\gamma_{1},\ldots,\gamma_{n-p}\in\{\pm\}}}
%EndExpansion
(-1)^{(\ldots)}P_{n-p}^{-\gamma_{n-p}}\theta_{p+1,q\mid\gamma_{1}\cdots
\gamma_{n-p}\underset{p}{\underbrace{0\ldots0}}}^{w_{1}\ldots w_{p}}\text{,}%
\end{align*}
where we have plugged in our previous computation and started to disregard the
precise sign. We know the last term of this expression by our induction
hypothesis and therefore obtain%
\begin{align*}
&  =P_{1}^{\gamma_{1}^{\dag}}\cdots P_{n-p-1}^{\gamma_{n-p-1}^{\dag}}\\
&
%TCIMACRO{\tsum _{\gamma_{1},\ldots,\gamma_{n-p}\in\{\pm\}}}%
%BeginExpansion
{\textstyle\sum_{\gamma_{1},\ldots,\gamma_{n-p}\in\{\pm\}}}
%EndExpansion%
%TCIMACRO{\tsum _{\gamma_{n-p+1}^{\ast}\ldots\gamma_{n}^{\ast}\in\{\pm\}}}%
%BeginExpansion
{\textstyle\sum_{\gamma_{n-p+1}^{\ast}\ldots\gamma_{n}^{\ast}\in\{\pm\}}}
%EndExpansion
(-1)^{(\ldots)}\underline{P_{n-p}^{-\gamma_{n-p}}P_{1}^{\gamma_{1}}\cdots
P_{n-p}^{\gamma_{n-p}}}\\
&  \left(  P_{n-p+1}^{\left(  -\gamma_{n-p+1}^{\ast}\right)  }%
\operatorname*{ad}(f_{w_{p}})P_{n-p+1}^{\gamma_{n-p+1}^{\ast}}\right)
\cdots\left(  P_{n}^{\left(  -\gamma_{n}^{\ast}\right)  }\operatorname*{ad}%
(f_{w_{1}})P_{n}^{\gamma_{n}^{\ast}}\right)  f_{0}\text{.}%
\end{align*}
As the $P_{1}^{+},\ldots,P_{n}^{+}$ commute pairwise, the same holds for all
$P_{1}^{\pm},\ldots,P_{n}^{\pm}$ (by Def. \ref{BT_Def_IdempotentsForA}). Thus,
the underlined expression can be rearranged to $P_{n-p}^{-\gamma_{n-p}}%
P_{n-p}^{\gamma_{n-p}}\ldots$, but $P_{i}^{+}P_{i}^{-}=P_{i}^{+}%
(\mathbf{1}-P_{i}^{+})=0$ as $P_{i}^{+}$ is an idempotent. The same for
$P_{i}^{-}P_{i}^{+}$. Hence, in all the indices $s_{1}\ldots s_{n}$ relevant
for our claim $H\delta^{\lbrack2]}\theta_{p+1,q}$ is zero.
\end{proof}

This readily implies the following key computation:

\begin{theorem}
[Main Theorem]\label{BT_MainThmInner}Let $(A,(I_{i}^{\pm}),\tau)$ be an
$n$-fold cubically decomposed algebra over a field $k$. Then%
\begin{align*}
&  \left.  ^{\otimes}\operatorname*{res}\nolimits_{\ast}\right.  (f_{0}\otimes
f_{1}\wedge\ldots\wedge f_{n})=-(-1)^{\frac{(n-1)n}{2}}\\
&  \qquad\qquad\tau%
%TCIMACRO{\tsum _{\pi\in\mathfrak{S}_{n}}}%
%BeginExpansion
{\textstyle\sum_{\pi\in\mathfrak{S}_{n}}}
%EndExpansion
\operatorname*{sgn}(\pi)%
%TCIMACRO{\tsum _{\gamma_{1}\ldots\gamma_{n}\in\{\pm\}}}%
%BeginExpansion
{\textstyle\sum_{\gamma_{1}\ldots\gamma_{n}\in\{\pm\}}}
%EndExpansion
\left(  -1\right)  ^{\gamma_{1}+\cdots+\gamma_{n}}(P_{1}^{-\gamma_{1}%
}\operatorname*{ad}f_{\pi(1)}P_{1}^{\gamma_{1}})\\
&  \qquad\qquad\cdots(P_{n}^{-\gamma_{n}}\operatorname*{ad}f_{\pi(n)}%
P_{n}^{\gamma_{n}})f_{0}\text{,}%
\end{align*}
where $P_{1}^{+},\ldots,P_{n}^{+}$ is any system of pairwise commuting good
idempotents in the sense of Def. \ref{BT_Def_IdempotentsForA} (the value does
not depend on the choice of the latter). Analogously,%
\[
(\left.  ^{\otimes}\operatorname*{res}\nolimits^{\ast}\right.  \varphi
)(f_{1}\wedge\ldots\wedge f_{n})(f_{0}):=\varphi\cdot\left.  ^{\otimes
}\operatorname*{res}\nolimits_{\ast}\right.  (f_{0}\otimes f_{1}\wedge
\ldots\wedge f_{n})
\]
for every $\varphi\in k$.
\end{theorem}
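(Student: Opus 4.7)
The strategy is to invoke Proposition \ref{BT_KeyLemmaFormula} at its terminal stage $p = n$, read off the composition in eq. \ref{lBTA_20}, and reorganize the sum over omission labels $(w_1, \ldots, w_n)$ as a sum over permutations. Starting from $\theta_{0,n} := \rho_2\theta$ with $\theta = f_0 \otimes f_1 \wedge \cdots \wedge f_n$, the alternating zigzag $H, \delta, H, \delta, \ldots$ depicted in diagram \ref{BT_FigLiftingTheThetaElements} produces successive representatives $\theta_{p+1, n-p}$ for $p = 0, \ldots, n$, whose closed form is precisely the content of Proposition \ref{BT_KeyLemmaFormula}. The final step $\tau \circ \rho_1$ then extracts a scalar from $\theta_{n+1, 0} \in N^{n+1} = I_{\operatorname{tr}}$, viewed modulo $[I_{\operatorname{tr}}, \mathfrak{g}]$.

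Specializing Proposition \ref{BT_KeyLemmaFormula} to $p = n$, the module $N^{n+1}$ has only the single component indexed by $0 \ldots 0$, so the case $\gamma_1 \ldots \gamma_{n-p}\, 0 \ldots 0$ with $n-p = 0$ exhausts all relevant data. The superscripts $w_1, \ldots, w_n \in \{1, \ldots, n\}$ must be pairwise distinct (otherwise the wedge expansion in eq. \ref{lBTA_15} vanishes), and therefore range precisely over $\mathfrak{S}_n$. Moreover, at $p = n$ the remaining wedge tail is empty (all $n$ factors have been omitted), so summing over $(w_1, \ldots, w_n)$ simply amounts to summing the scalar coefficients produced by the proposition.

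What remains is sign bookkeeping. The combinatorial prefactor $(-1)^{\sum_{u=1}^{n-1}(u+1)} = (-1)^{n(n+1)/2 - 1}$ pairs with $(-1)^{w_1 + \cdots + w_n} = (-1)^{1 + 2 + \cdots + n} = (-1)^{n(n+1)/2}$, which is constant in $w$ since $(w_1, \ldots, w_n)$ is always a permutation of $\{1, \ldots, n\}$, yielding the global sign $-1$. To bring the operator tail $(P_1^{-\gamma_1}\operatorname{ad}(f_{w_n})P_1^{\gamma_1}) \cdots (P_n^{-\gamma_n}\operatorname{ad}(f_{w_1})P_n^{\gamma_n})$ into the form stated in the theorem, I would substitute $\pi := \sigma \circ \mathrm{rev}$, where $\sigma(i) := w_i$ and $\mathrm{rev}$ is the order-reversing permutation of $\{1, \ldots, n\}$. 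Using eq. \ref{lBTA_22} together with $\operatorname{sgn}(\mathrm{rev}) = (-1)^{n(n-1)/2}$, this gives $\rho(w_1, \ldots, w_n) = \operatorname{sgn}(\pi) \cdot (-1)^{n(n-1)/2}$, producing the claimed overall prefactor $-(-1)^{n(n-1)/2}$.

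Finally, applying $\tau$ via the edge map $\rho_1$ acts invisibly on the chosen representative, since $\rho_1$ merely identifies $\theta_{n+1, 0} \in N^{n+1}$ with its class in $I_{\operatorname{tr}}/[I_{\operatorname{tr}}, \mathfrak{g}]$, completing the homological formula. The cohomological version for ${}^{\otimes}\operatorname{res}^*$ follows immediately from eq. \ref{lBT_39}. The genuine obstacle has already been cleared in the proof of Proposition \ref{BT_KeyLemmaFormula}: namely the vanishing of the $\delta^{[2]}$ contribution after two successive applications of $H$, which rests on $H^2 = 0$ (eq. \ref{lBTA_35}). Given that proposition, the present theorem is pure sign accounting, and the main care needed is the reversal substitution which generates the factor $(-1)^{n(n-1)/2}$.
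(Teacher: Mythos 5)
Your proposal is correct and follows essentially the same route as the paper: it specializes Prop.~\ref{BT_KeyLemmaFormula} to $p=n$, rewrites the sum over pairwise distinct omission labels $(w_1,\ldots,w_n)$ as a sum over $\mathfrak{S}_n$ composed with the order-reversing permutation, and uses eq.~\ref{lBTA_22} together with $\operatorname{sgn}(\mathrm{rev})=(-1)^{n(n-1)/2}$ and the cancellation of $(-1)^{\sum_{u=1}^{n-1}(u+1)}$ against $(-1)^{w_1+\cdots+w_n}$ to get the stated prefactor $-(-1)^{(n-1)n/2}$. The sign bookkeeping and the treatment of $\tau\circ\rho_1$ and of ${}^{\otimes}\operatorname{res}^{\ast}$ agree with the paper's own argument.
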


We remark that one can also write the above formula as%
\begin{align*}
&  \left.  ^{\otimes}\operatorname*{res}\nolimits_{\ast}\right.  (f_{0}\otimes
f_{1}\wedge\ldots\wedge f_{n})=-(-1)^{\frac{(n-1)n}{2}}\\
&  \tau%
%TCIMACRO{\tsum _{\pi\in\mathfrak{S}_{n}}}%
%BeginExpansion
{\textstyle\sum_{\pi\in\mathfrak{S}_{n}}}
%EndExpansion
\operatorname*{sgn}(\pi)%
%TCIMACRO{\tsum _{\gamma_{1}\ldots\gamma_{n}\in\{\pm\}}}%
%BeginExpansion
{\textstyle\sum_{\gamma_{1}\ldots\gamma_{n}\in\{\pm\}}}
%EndExpansion
\left(  -1\right)  ^{\gamma_{1}+\cdots+\gamma_{n}}(P_{1}^{-\gamma_{1}}%
f_{\pi(1)}P_{1}^{\gamma_{1}})\cdots(P_{n}^{-\gamma_{n}}f_{\pi(n)}P_{n}%
^{\gamma_{n}})f_{0}%
\end{align*}
since for any expression $g$ we have%

\begin{align}
P_{i}^{-\gamma_{i}}\operatorname*{ad}(f_{w})P_{i}^{\gamma_{i}}g  &
=P_{i}^{-\gamma_{i}}[f_{w},P_{i}^{\gamma_{i}}g]=P_{i}^{-\gamma_{i}}f_{w}%
P_{i}^{\gamma_{i}}g-P_{i}^{-\gamma_{i}}P_{i}^{\gamma_{i}}gf_{w}\label{lBTA_33}%
\\
&  =P_{i}^{-\gamma_{i}}f_{w}P_{i}^{\gamma_{i}}g\nonumber
\end{align}
since $P_{i}^{-\gamma_{i}}P_{i}^{\gamma_{i}}=(\mathbf{1}-P_{i}^{\gamma_{i}%
})P_{i}^{\gamma_{i}}=0$ and $P_{i}^{\gamma_{i}}$ is an idempotent.

\begin{proof}
Use Prop. \ref{BT_KeyLemmaFormula} with $p=n$. Plugging these components into
the shorthand notation of eq. \ref{lBTA_15} we unwind for $\left.  ^{\otimes
}\operatorname*{res}\nolimits_{\ast}\right.  (f_{0}\otimes f_{1}\wedge
\ldots\wedge f_{n})$ the formula
\begin{align*}
&  =-\tau\left(  -1\right)  ^{\frac{n^{2}+n}{2}}\sum_{\substack{w_{1}\ldots
w_{n}\\=\{1,\ldots,n\}}}\rho(w_{1},\ldots,w_{n})(-1)^{w_{1}+\cdots+w_{n}}\\
&
%TCIMACRO{\tsum _{\gamma_{1}\ldots\gamma_{n}\in\{\pm\}}}%
%BeginExpansion
{\textstyle\sum_{\gamma_{1}\ldots\gamma_{n}\in\{\pm\}}}
%EndExpansion
\left(  -1\right)  ^{\gamma_{1}+\cdots+\gamma_{n}}(P_{1}^{-\gamma_{1}%
}\operatorname*{ad}(f_{w_{n}})P_{1}^{\gamma_{1}})\cdots(P_{n}^{-\gamma_{n}%
}\operatorname*{ad}(f_{w_{1}})P_{n}^{\gamma_{n}})f_{0}\text{.}%
\end{align*}
We can clearly replace $w_{1},\ldots,w_{n}$ by a sum over all permutations of
$\{1,\ldots,n\}$. In order to obtain a nice formula (in the above formula the
$P_{i}$ appear in ascending order, while the $w_{i}$ appear in descending
order), we prefer to compose each permutation with the order-reversing
permutation $w_{i}:=\pi(n-i+1)$: Hence,%
\begin{align*}
&  =-\tau\left(  -1\right)  ^{\frac{n^{2}+n}{2}}\sum_{\pi\in\mathfrak{S}_{n}%
}\rho(\pi(n),\ldots,\pi(1))(-1)^{1+\cdots+n}\\
&
%TCIMACRO{\tsum _{\gamma_{1}\ldots\gamma_{n}\in\{\pm\}}}%
%BeginExpansion
{\textstyle\sum_{\gamma_{1}\ldots\gamma_{n}\in\{\pm\}}}
%EndExpansion
\left(  -1\right)  ^{\gamma_{1}+\cdots+\gamma_{n}}(P_{1}^{-\gamma_{1}%
}\operatorname*{ad}(f_{\pi(1)})P_{1}^{\gamma_{1}})\cdots(P_{n}^{-\gamma_{n}%
}\operatorname*{ad}(f_{\pi(n)})P_{n}^{\gamma_{n}})f_{0}\text{.}%
\end{align*}
To conclude, use eq. \ref{lBTA_22} and the (easy)\ fact that the
order-reversing permutation has signum $(-1)^{\frac{(n-1)n}{2}}$, giving the
sign of our claim.
\end{proof}

\begin{proof}
[Proof of Thms. \ref{intro_Thm1UniversalTateCocycle} \&
\ref{intro_Thm2CocycleFormula}]We define $\mathfrak{G}:=E^{n}(k)$, where $E$
is the functor defined in \S \ref{TATE_section_InfiniteMatrixAlgebras}. As
already discussed in \S \ref{TATE_section_InfiniteMatrixAlgebras} this
contains $k[t_{1}^{\pm},\ldots,t_{n}^{\pm}]$ as a Lie subalgebra, acting as
multiplication operators $x\mapsto f\cdot x$. It is also easily checked that
the differential operators $t_{1}^{s_{1}}\cdots t_{n}^{s_{n}}\partial_{t_{i}}$
can be written as infinite matrices. If $\mathfrak{g}$ is a \textit{finite}%
-dimensional Lie algebra, observe that $\mathfrak{G}=E^{n}(k)$ and
$E^{n}(\operatorname*{End}_{k}(\mathfrak{g}))$ are actually isomorphic. If
$\mathfrak{g}$ is simple, it is centreless, so the adjoint representation
gives an embedding $\mathfrak{g}\hookrightarrow\operatorname*{End}%
\nolimits_{k}(\mathfrak{g})$, and thus%
\[
\mathfrak{g}[t_{1}^{\pm},\ldots,t_{n}^{\pm}]\hookrightarrow E^{n}%
(\operatorname*{End}\nolimits_{k}(\mathfrak{g}))\simeq E^{n}(k)=\mathfrak{G}%
\text{.}%
\]
This shows that all Lie algebras in the claim are subalgebras of
$\mathfrak{G}$. As shown in \S \ref{TATE_section_InfiniteMatrixAlgebras},
$\mathfrak{G}$ is a cubically decomposed algebra, so we define $\phi$ as in
Def. \ref{BT_Def_TateBeilinsonAbstractResidueMaps}, $\phi:=\left.
\operatorname*{res}\nolimits^{\ast}(1)\right.  $. Since we work with field
coefficients, the Universal Coefficient Theorem for Lie algebras tells us that%
\[
H^{n+1}(\mathfrak{g},k)\cong H_{n+1}(\mathfrak{g},k)^{\ast}\text{,}%
\]
i.e. knowing the values of a cocycle only on Lie cycles (instead of all of $%
%TCIMACRO{\tbigwedge \nolimits^{\bullet}}%
%BeginExpansion
{\textstyle\bigwedge\nolimits^{\bullet}}
%EndExpansion
\mathfrak{g}$) determines the cocycle uniquely, $\left.  \operatorname*{res}%
\nolimits^{\ast}(1)\right.  (\alpha)=\left.  \operatorname*{res}%
\nolimits_{\ast}\right.  \alpha$. However, by Lemma
\ref{BTrev_ComputeWedgeResViaTensorRes} we may evaluate the cocycle on the
image of $I$ by using $\left.  ^{\otimes}\operatorname*{res}\nolimits_{\ast
}\right.  $ instead. Using Thm. \ref{BT_MainThmInner} we get an explicit
formula for $\left.  ^{\otimes}\operatorname*{res}\nolimits^{\ast}(1)\right.
$, proving Thm. \ref{intro_Thm2CocycleFormula}. Using the explicit formula, it
is a direct computation to check that for $n=1$ the cocycle agrees with the
ones mentioned in the claim of Thm. \ref{intro_Thm1UniversalTateCocycle}.
\end{proof}

\section{\label{BT_sect_applications_residue}Application to the
Multidimensional Residue}

In this section we will show that the Lie cohomology class of Def.
\ref{BT_Def_TateBeilinsonAbstractResidueMaps} naturally gives the
multidimensional (Parshin) residue.

We work in the framework of multivariate Laurent polynomial rings over a field
$k$, see \S \ref{TATE_section_InfiniteMatrixAlgebras}. In other words, as our
cubically decomposed algebra we take an infinite matrix algebra $A=E^{n}(k)$
and $\mathfrak{g}=A_{Lie}$. Via eq. \ref{lBTA_30} it acts on the $k$-vector
space $k[t_{1}^{\pm},\ldots,t_{n}^{\pm}]$. The latter, now interpreted as a
ring, also embeds as a \textit{commutative} subalgebra into $A$. In order to
distinguish very clearly between the subalgebra of $A$ and the vector space it
acts on, we shall from now on write $k[\mathbf{t}_{1}^{\pm},\ldots
,\mathbf{t}_{n}^{\pm}]$ for the $k$-vector space. Thus, when we write $t_{i}$
we always refer to the associated multiplication operator $x\mapsto t_{i}\cdot
x$ in $A$, e.g. $t_{i}^{m}\cdot\mathbf{t}_{i}^{l}=\mathbf{t}_{i}^{m+l}%
$.\newline Following \cite[Lemma 1(b)]{MR565095} we may introduce a (not quite
well-defined\footnote{It does not respect the relation $\mathrm{d}%
(ab)=b\mathrm{d}a+a\mathrm{d}b$; this artifact already occurs in Beilinson's
paper \cite{MR565095}. However, this ambiguity dissolves after composing with
the residue (as in the theorem) and it is very convenient to treat this as
some sort of a map for the moment.}) `map'
\begin{equation}
\varkappa:\Omega_{k[t_{1}^{\pm},\ldots,t_{n}^{\pm}]/k}^{n}\rightarrow
H_{n+1}(\mathfrak{g},k)\qquad f_{0}\mathrm{d}f_{1}\wedge\ldots\wedge
\mathrm{d}f_{n}\mapsto f_{0}\wedge f_{1}\wedge\ldots\wedge f_{n}\text{.}
\label{lBT_36}%
\end{equation}
As $k[t_{1}^{\pm},\ldots,t_{n}^{\pm}]$ is commutative, the $f_{i}$ commute
pairwise and thus $f_{0}\wedge\ldots\wedge f_{n}$ is indeed a Lie homology cycle.

\begin{theorem}
\label{BT_PropDetFormulaForResidue}The morphism%
\[
\operatorname*{res}\nolimits_{\ast}\circ\varkappa:\Omega_{k[t_{1}^{\pm}%
,\ldots,t_{n}^{\pm}]/k}^{n}\longrightarrow k
\]
(with $\varkappa$ as in eq. \ref{lBT_36} and $\operatorname*{res}%
\nolimits_{\ast}$ as in Def. \ref{BT_Def_TateBeilinsonAbstractResidueMaps})
for $c_{i,j}\in\mathbf{Z}$ is explicitly given by%
\[
t_{1}^{c_{0,1}}\ldots t_{n}^{c_{0,n}}\mathrm{d}(t_{1}^{c_{1,1}}\ldots
t_{n}^{c_{1,n}})\wedge\ldots\wedge\mathrm{d}(t_{1}^{c_{n,1}}\ldots
t_{n}^{c_{n,n}})\mapsto-(-1)^{\frac{n^{2}+n}{2}}\det%
\begin{pmatrix}
c_{1,1} & \cdots & c_{n,1}\\
\vdots & \ddots & \vdots\\
c_{1,n} & \cdots & c_{n,n}%
\end{pmatrix}
\]
whenever $\sum_{p=0}^{n}c_{p,i}=0$ and is zero otherwise. In particular
$-(-1)^{\frac{n^{2}+n}{2}}(\operatorname*{res}\nolimits_{\ast}\circ\varkappa)$
is the conventional multidimensional (Parshin) residue.
\end{theorem}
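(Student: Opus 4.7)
The plan is to combine the compatibility Lemma~\ref{BTrev_ComputeWedgeResViaTensorRes} with the explicit formula of Theorem~\ref{BT_MainThmInner} and specialize to the cubically decomposed algebra $A = E^{n}(k)$ acting on $V := k[\mathbf{t}_{1}^{\pm 1},\ldots,\mathbf{t}_{n}^{\pm 1}]$. For $\omega = f_{0}\mathrm{d}f_{1}\wedge\cdots\wedge\mathrm{d}f_{n}$, the element $\alpha := f_{0}\otimes f_{1}\wedge\cdots\wedge f_{n}\in C(\mathfrak{g})_{n}$ is a Lie cycle because all the multiplication operators $f_{i}$ commute pairwise, so both $\delta^{[1]}\alpha$ and $\delta^{[2]}\alpha$ vanish. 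Since $I(\alpha) = (-1)^{n} f_{0}\wedge f_{1}\wedge\cdots\wedge f_{n} = (-1)^{n}\varkappa(\omega)$, Lemma~\ref{BTrev_ComputeWedgeResViaTensorRes} reduces $\operatorname{res}_{\ast}(\varkappa(\omega))$ to ${}^{\otimes}\operatorname{res}_{\ast}(\alpha)$, which is computable via the explicit formula of Theorem~\ref{BT_MainThmInner}.

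Next I choose the natural system of good idempotents: for each $i$, let $P_{i}^{+}$ project $V$ onto the summand where the $i$-th exponent is $\geq 0$. These commute pairwise and satisfy Definition~\ref{BT_Def_IdempotentsForA}. Writing $f_{\ell} = t_{1}^{c_{\ell,1}}\cdots t_{n}^{c_{\ell,n}}$ and splitting $f_{\pi(i)} = t_{i}^{c_{\pi(i),i}}\cdot g_{\pi(i)}^{(i)}$ with $g_{\pi(i)}^{(i)} := \prod_{j\neq i} t_{j}^{c_{\pi(i),j}}$, note that $P_{i}^{\pm}$ commutes with $g_{\pi(i)}^{(i)}$ (since the latter involves no $t_{i}$). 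Hence each factor becomes $P_{i}^{-\gamma_{i}} f_{\pi(i)} P_{i}^{\gamma_{i}} = Q_{i}\cdot g_{\pi(i)}^{(i)}$ with $Q_{i} := P_{i}^{-\gamma_{i}} t_{i}^{c_{\pi(i),i}} P_{i}^{\gamma_{i}}$.

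The key observation is that the natural tensor decomposition $V \cong \bigotimes_{i} k[\mathbf{t}_{i}^{\pm 1}]$ is respected by all the relevant operators: each $Q_{i}$ and each $t_{i}$ acts only on the $i$-th tensor factor. Reading off the $i$-th component of the product $\prod_{i}(P_{i}^{-\gamma_{i}} f_{\pi(i)} P_{i}^{\gamma_{i}})\, f_{0}$, the full operator factors as $\bigotimes_{i} B_{i}$, where $B_{i}$ is a composite of $Q_{i}$ with shifts $t_{i}^{c_{\pi(j),i}}$ for $j \neq i$ together with $t_{i}^{c_{0,i}}$. The nested trace $\tau$ respects this tensor decomposition and equals $\prod_{i}\operatorname{tr}(B_{i})$; by cyclic invariance of the one-dimensional trace, $\operatorname{tr}(B_{i}) = \operatorname{tr}(Q_{i}\cdot t_{i}^{s_{i}})$ with $s_{i} = \sum_{p=0}^{n} c_{p,i} - c_{\pi(i),i}$. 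If $\sum_{p} c_{p,i} \neq 0$ for some $i$, then $Q_{i}\cdot t_{i}^{s_{i}}$ has a net nonzero shift in direction $i$ and its one-dimensional trace vanishes, yielding the ``zero otherwise'' assertion.

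Under $\sum_{p} c_{p,i} = 0$, the remaining sum $\sum_{\gamma_{i}\in\{\pm\}} (-1)^{\gamma_{i}} \operatorname{tr}(P_{i}^{-\gamma_{i}} t_{i}^{c_{\pi(i),i}} P_{i}^{\gamma_{i}} t_{i}^{-c_{\pi(i),i}})$ is a short one-dimensional count: the contributing $\mathbf{t}_{i}^{l_{i}}$ are exactly those basis vectors whose shift by $\pm c_{\pi(i),i}$ crosses the boundary $l_{i} = 0$ in the appropriate direction, and both the $\gamma_{i} = +$ and $\gamma_{i} = -$ branches contribute $-c_{\pi(i),i}$. Multiplying over $i$ and summing over $\pi\in\mathfrak{S}_{n}$ weighted by $\operatorname{sgn}(\pi)$ produces $(-1)^{n}\det(c_{j,i})$. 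Combining with the prefactor $-(-1)^{n(n-1)/2}$ from Theorem~\ref{BT_MainThmInner} and the $(-1)^{n}$ from $I(\alpha) = (-1)^{n}\varkappa(\omega)$ gives the claimed formula, and the interpretation as the conventional Parshin residue follows by direct inspection for monomials. The main obstacle is keeping the signs straight throughout the chain---the $(-1)^{n}$ built into the definition of $I$, the $(-1)^{n(n-1)/2}$ in Theorem~\ref{BT_MainThmInner}, the sign convention in Lemma~\ref{BTrev_ComputeWedgeResViaTensorRes}, and the alternating $\gamma$-sum---so that they compose to the advertised $-(-1)^{n(n+1)/2}$.
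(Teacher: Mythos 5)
Your overall route is the paper's own: reduce via Lemma \ref{BTrev_ComputeWedgeResViaTensorRes} and Theorem \ref{BT_MainThmInner} to evaluating $\tau M_{\pi}$ for $M_{\pi}=\sum_{\gamma}(-1)^{\gamma_{1}+\cdots+\gamma_{n}}(P_{1}^{-\gamma_{1}}f_{\pi(1)}P_{1}^{\gamma_{1}})\cdots(P_{n}^{-\gamma_{n}}f_{\pi(n)}P_{n}^{\gamma_{n}})f_{0}$ with the idempotents of eq. \ref{lBTA_32}, then compute on the monomial basis. Your evaluation of the trace is organized differently and is fine: you split $V\cong\bigotimes_{i}k[\mathbf{t}_{i}^{\pm1}]$, observe that each conjugated factor is a pure tensor ($P_{i}^{\pm}$ commutes with the variables $t_{j}$, $j\neq i$), so the summand for fixed $\gamma$ is $\bigotimes_{i}B_{i}$ with $B_{i}$ finitely supported, and use cyclicity to reduce to one-variable traces $\operatorname{tr}(Q_{i}t_{i}^{s_{i}})$; the paper instead proves the inductive subclaim (eq. \ref{lBT_37}) that $M_{\pi}$ acts diagonally and counts eigenvalues. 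Both give $\tau M_{\pi}=(-1)^{n}\prod_{i}c_{\pi(i),i}$ when $\sum_{p}c_{p,i}=0$ for all $i$, and zero otherwise. (Your phrase that ``both the $\gamma_{i}=+$ and $\gamma_{i}=-$ branches contribute $-c_{\pi(i),i}$'' is loosely worded: one branch gives $\max(0,-c_{\pi(i),i})$, the other $-\max(0,c_{\pi(i),i})$, and it is their sum that equals $-c_{\pi(i),i}$; the conclusion is correct.)

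The genuine gap is the final sign assembly, which is precisely where the content of the constant $-(-1)^{(n^{2}+n)/2}$ lies. You insert an extra factor $(-1)^{n}$ from $I(\alpha)=(-1)^{n}\varkappa(\omega)$ and then assert that the product of your factors ``gives the claimed formula''. It does not: $(-1)^{n}\cdot\bigl(-(-1)^{n(n-1)/2}\bigr)\cdot(-1)^{n}\prod_{i}c_{\pi(i),i}$, summed against $\operatorname{sgn}(\pi)$, yields $-(-1)^{n(n-1)/2}\det$, which differs from the advertised $-(-1)^{(n^{2}+n)/2}\det$ by $(-1)^{n}$. The paper's proof invokes Lemma \ref{BTrev_ComputeWedgeResViaTensorRes} with no such factor: it evaluates $\operatorname{res}_{\ast}(f_{0}\wedge f_{1}\wedge\ldots\wedge f_{n})$ directly as ${}^{\otimes}\operatorname{res}_{\ast}(f_{0}\otimes f_{1}\wedge\ldots\wedge f_{n})$ (the comparison of the two residues happens at the $E_{n+1,1}$-corner, through the degree-zero component of $I$, which carries no sign), and so lands on $-(-1)^{n(n-1)/2}\cdot(-1)^{n}=-(-1)^{(n^{2}+n)/2}$, matching the statement. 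As written, you must either drop your extra $(-1)^{n}$ and justify why the identification used in the lemma is sign-free for this purpose, or accept that your chain of factors produces a constant differing from the one claimed; simply asserting that the signs ``compose to the advertised $-(-1)^{n(n+1)/2}$'' without carrying out the composition leaves the statement unproven at exactly the point where it is nontrivial.
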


The complicated sign $-(-1)^{\frac{n^{2}+n}{2}}$ should not concern us too
much; it is an artifact of homological algebra. Just by changing our sign
conventions for bicomplexes, we could easily switch to an overall opposite
sign. Letting $c_{i,j}=\delta_{i=j}$ for $i,j\in\{1,\ldots,n\}$ gives the
familiar%
\[
-(-1)^{\frac{n^{2}+n}{2}}\operatorname*{res}\nolimits_{\ast}(at_{1}^{c_{0,1}%
}\ldots t_{n}^{c_{0,n}}\wedge t_{1}\wedge\ldots\wedge t_{n})=\delta
_{c_{0,1}=-1}\cdots\delta_{c_{0,n}=-1}a
\]
for $a\in k$. In particular this assures us that the map $\operatorname*{res}%
\nolimits_{\ast}$ gives the correct notion of residue: it is the
$(-1,\ldots,-1)$-coefficient of the Laurent expansion.

\begin{proof}
After unwinding $\varkappa$ it remains to evaluate $\operatorname*{res}%
\nolimits_{\ast}(f_{0}\wedge f_{1}\wedge\ldots\wedge f_{n})$ for $f_{i}%
:=t_{1}^{c_{i,1}}\cdots t_{n}^{c_{i,n}}$ ($i=0,\ldots,n$). Clearly
$f_{0}\otimes f_{1}\wedge\ldots\wedge f_{n}$ is a cycle in $H_{n}%
(\mathfrak{g},\mathfrak{g})$, and so by Lemma
\ref{BTrev_ComputeWedgeResViaTensorRes} we may use $\left.  ^{\otimes
}\operatorname*{res}\nolimits_{\ast}\right.  $ instead of $\operatorname*{res}%
\nolimits_{\ast}$. Then Thm. \ref{BT_MainThmInner} reduces this to the matrix
trace%
\begin{align}
&  \operatorname*{res}\nolimits_{\ast}(f_{0}\wedge f_{1}\wedge\ldots\wedge
f_{n})=-(-1)^{\frac{(n-1)n}{2}}%
%TCIMACRO{\tsum _{\pi\in\mathfrak{S}_{n}}}%
%BeginExpansion
{\textstyle\sum_{\pi\in\mathfrak{S}_{n}}}
%EndExpansion
\operatorname*{sgn}(\pi)\tau M_{\pi}\text{, where}\label{lBT_38}\\
&  M_{\pi}:=%
%TCIMACRO{\tsum _{\gamma_{1}\ldots\gamma_{n}\in\{\pm\}}}%
%BeginExpansion
{\textstyle\sum_{\gamma_{1}\ldots\gamma_{n}\in\{\pm\}}}
%EndExpansion
\left(  -1\right)  ^{\gamma_{1}+\cdots+\gamma_{n}}(P_{1}^{-\gamma_{1}}%
f_{\pi(1)}P_{1}^{\gamma_{1}})\cdots(P_{n}^{-\gamma_{n}}f_{\pi(n)}P_{n}%
^{\gamma_{n}})f_{0}\text{.}\nonumber
\end{align}
For the evaluation of $\tau M_{\pi}$ fix a permutation $\pi$ and pick the
(pairwise commuting) system of idempotents given by%
\begin{equation}
P_{j}^{+}\mathbf{t}_{1}^{\lambda_{1}}\cdots\mathbf{t}_{n}^{\lambda_{n}}%
=\delta_{\lambda_{j}\geq0}\mathbf{t}_{1}^{\lambda_{1}}\cdots\mathbf{t}%
_{n}^{\lambda_{n}}\text{.}\qquad\text{(with }\lambda_{1},\ldots,\lambda_{n}%
\in\mathbf{Z}\text{)} \label{lBTA_32}%
\end{equation}
Next, observe that the Laurent polynomial ring $W:=k[\mathbf{t}_{1}^{\pm
},\ldots,\mathbf{t}_{n}^{\pm}]$ is stable (i.e. $\phi W\subseteq W$) under the
endomorphisms $f_{0},\ldots,f_{n}$ and the idempotents $P_{i}^{\pm}$, and
therefore under $M_{\pi}$. Hence, it follows that it suffices to evaluate the
trace of $M_{\pi}$ on the $k$-vector subspace $k[\mathbf{t}_{1}^{\pm}%
,\ldots,\mathbf{t}_{n}^{\pm}]$. We compute successively%
\begin{align*}
f_{k}P_{j}^{+}\mathbf{t}_{1}^{\lambda_{1}}\cdots\mathbf{t}_{n}^{\lambda_{n}}
&  =\delta_{\lambda_{j}\geq0}\mathbf{t}_{1}^{\lambda_{1}+c_{k,1}}%
\cdots\mathbf{t}_{n}^{\lambda_{n}+c_{k,n}}\\
P_{j}^{-}f_{k}P_{j}^{+}\mathbf{t}_{1}^{\lambda_{1}}\cdots\mathbf{t}%
_{n}^{\lambda_{n}}  &  =\delta_{0\leq\lambda_{j}<-c_{k,j}}\mathbf{t}%
_{1}^{\lambda_{1}+c_{k,1}}\cdots\mathbf{t}_{n}^{\lambda_{n}+c_{k,n}}%
\end{align*}
and analogously for $P_{j}^{+}f_{k}P_{j}^{-}$. We find%
\begin{align}
&
%TCIMACRO{\tsum \nolimits_{\gamma_{j}\in\{\pm\}}}%
%BeginExpansion
{\textstyle\sum\nolimits_{\gamma_{j}\in\{\pm\}}}
%EndExpansion
(-1)^{\gamma_{j}}\left(  P_{j}^{-\gamma_{j}}f_{k}P_{j}^{\gamma_{j}}\right)
\mathbf{t}_{1}^{\lambda_{1}}\cdots\mathbf{t}_{n}^{\lambda_{n}}\label{lBT_28}\\
&  \qquad=(\delta_{0\leq\lambda_{j}<-c_{k,j}}-\delta_{-c_{k,j}\leq\lambda
_{j}<0})\mathbf{t}_{1}^{\lambda_{1}+c_{k,1}}\cdots\mathbf{t}_{n}^{\lambda
_{n}+c_{k,n}}\text{.}\nonumber
\end{align}
Now we claim:\medskip

\begin{itemize}
\item Subclaim: \textit{Writing }$w_{i}:=\pi(i)$ \textit{we have}%
\begin{align}
M_{\pi}\mathbf{t}_{1}^{\lambda_{1}}\cdots\mathbf{t}_{n}^{\lambda_{n}}  &  =%
%TCIMACRO{\tprod \limits_{i=1}^{n}}%
%BeginExpansion
{\textstyle\prod\limits_{i=1}^{n}}
%EndExpansion
(\delta_{0\leq\lambda_{i}+c_{0,i}+\sum_{p=i+1}^{n}c_{w_{p},i}<-c_{w_{i},i}%
}\nonumber\\
&  -\delta_{-c_{w_{i},i}\leq\lambda_{i}+c_{0,i}+\sum_{p=i+1}^{n}c_{w_{p},i}%
<0})\nonumber\\
&  \mathbf{t}_{1}^{\lambda_{1}+c_{0,1}+\sum_{p=1}^{n}c_{w_{p},1}}%
\cdots\mathbf{t}_{n}^{\lambda_{n}+c_{0,n}+\sum_{p=1}^{n}c_{w_{p},n}}\text{.}
\label{lBT_37}%
\end{align}
\medskip
\end{itemize}

(\textit{Proof:} Define for $i=1,\ldots,n+1$ the truncated sum%
\[
M_{\pi}^{(i)}:=\left[
%TCIMACRO{\tsum _{\gamma_{i}\ldots\gamma_{n}\in\{\pm\}}}%
%BeginExpansion
{\textstyle\sum_{\gamma_{i}\ldots\gamma_{n}\in\{\pm\}}}
%EndExpansion
\left(  -1\right)  ^{\gamma_{i}+\cdots+\gamma_{n}}(P_{i}^{-\gamma_{i}}%
f_{w_{i}}P_{i}^{\gamma_{i}})\cdots(P_{n}^{-\gamma_{n}}f_{w_{n}}P_{n}%
^{\gamma_{n}})\right]  f_{0}%
\]
so that $M_{\pi}^{(1)}=M_{\pi}$ and $M_{\pi}^{(n+1)}=f_{0}$. We claim that%
\begin{equation}
M_{\pi}^{(i)}\mathbf{t}_{1}^{\lambda_{1}}\cdots\mathbf{t}_{n}^{\lambda_{n}%
}=\alpha\mathbf{t}_{1}^{\lambda_{1}+c_{0,1}+\sum_{p=i}^{n}c_{w_{p},1}}%
\cdots\mathbf{t}_{n}^{\lambda_{n}+c_{0,n}+\sum_{p=i}^{n}c_{w_{p},n}}
\label{lBT_29}%
\end{equation}
for some factor $\alpha\in\{\pm1,0\}$. For $i=n+1$ this is clear since
$f_{0}=t_{1}^{c_{0,1}}\cdots t_{n}^{c_{0,n}}$, in particular $\alpha=1$.
Assuming this holds for $i+1$, for $i$ we get by using eq. \ref{lBT_28} (with
the appropriate values plugged in: $j:=i$ and $k:=w_{i}$, and $\lambda_{i}$ as
in eq. \ref{lBT_29})%
\begin{align}
M_{\pi}^{(i)}\mathbf{t}_{1}^{\lambda_{1}}\cdots\mathbf{t}_{n}^{\lambda_{n}}
&  =%
%TCIMACRO{\tsum \nolimits_{\gamma_{i}\in\{\pm\}}}%
%BeginExpansion
{\textstyle\sum\nolimits_{\gamma_{i}\in\{\pm\}}}
%EndExpansion
(-1)^{\gamma_{i}}\left(  P_{i}^{-\gamma_{i}}f_{w_{i}}P_{i}^{\gamma_{i}%
}\right)  M_{\pi}^{(i+1)}\mathbf{t}_{1}^{\lambda_{1}}\cdots\mathbf{t}%
_{n}^{\lambda_{n}}\nonumber\\
&  \mathbf{=}(\delta_{0\leq\lambda_{i}+c_{0,i}+\sum_{p=i+1}^{n}c_{w_{p}%
,i}<-c_{w_{i},i}}-\delta_{-c_{w_{i},i}\leq\lambda_{i}+c_{0,i}+\sum_{p=i+1}%
^{n}c_{w_{p},i}<0})\label{lBT_31}\\
&  \alpha\mathbf{t}_{1}^{\lambda_{1}+c_{0,1}+\sum_{p=i+1}^{n}c_{w_{p}%
,1}+c_{w_{i},1}}\cdots\mathbf{t}_{n}^{\lambda_{n}+c_{0,n}+\sum_{p=i+1}%
^{n}c_{w_{p},n}+c_{w_{i},n}}\text{.}\nonumber
\end{align}
This proves our claim for all $i$ by induction. We observe that the pre-factor
$\alpha$ in each step just gets multiplied with the expression is eq.
\ref{lBT_31}, giving the product in our claim.\medskip)

Next, we need to evaluate the trace of $M_{\pi}$ as given in eq. \ref{lBT_37}.
The endomorphism is nilpotent unless%
\begin{equation}
\forall i:c_{0,1}+%
%TCIMACRO{\tsum \nolimits_{p=1}^{n}}%
%BeginExpansion
{\textstyle\sum\nolimits_{p=1}^{n}}
%EndExpansion
c_{w_{p},i}=0\text{.} \label{lBT_23}%
\end{equation}
We remark that $w_{1},\ldots,w_{n}$ is just a permutation of $\{1,\ldots,n\}$,
so these conditions can be rewritten as $\sum_{p=0}^{n}c_{p,i}=0$. In the
nilpotent case the trace is clearly zero. Hence, we may assume we are in the
case where eq. \ref{lBT_23} holds. Using these equations and the useful
convention $w_{n+1}:=0$, our expression for $M_{\pi}$ simplifies to%
\begin{align}
M_{\pi}\mathbf{t}_{1}^{\lambda_{1}}\cdots\mathbf{t}_{n}^{\lambda_{n}}  &  =%
%TCIMACRO{\tprod \limits_{i=1}^{n}}%
%BeginExpansion
{\textstyle\prod\limits_{i=1}^{n}}
%EndExpansion
(\delta_{0\leq\lambda_{i}+\sum_{p=i+1}^{n+1}c_{w_{p},i}<-c_{w_{i},i}%
}\label{lBT_27}\\
&  -\delta_{0\leq\lambda_{i}+c_{w_{i},i}+\sum_{p=i+1}^{n+1}c_{w_{p}%
,i}<c_{w_{i},i}})\mathbf{t}_{1}^{\lambda_{1}}\cdots\mathbf{t}_{n}^{\lambda
_{n}}\text{.}\nonumber
\end{align}
The endomorphism $M_{\pi}$ is visibly diagonal of finite rank and we may
reduce the computation of the trace to a (finite-dimensional) stable vector
subspace. A finite subset of the $\mathbf{t}_{1}^{\lambda_{1}}\cdots
\mathbf{t}_{n}^{\lambda_{n}}$ ($\lambda_{1},\ldots,\lambda_{n}\in\mathbf{Z}$)
provides a basis. We see in eq. \ref{lBT_27} that $M_{\pi}$ acts diagonally on
these basis vectors with eigenvalues $\pm1$ or $0$. Moreover, for each $i$ we
either have $c_{w_{i},i}\geq0$ or $c_{w_{i},i}<0$, which shows that each
bracket of the shape $(\delta_{0\leq\lambda<-c}-\delta_{-c\leq\lambda<0})$ in
eq. \ref{lBT_27} either attains only values in $\{+1,0\}$ when we run through
all $\lambda_{1},\ldots,\lambda_{n}\in\mathbf{Z}$, or only values in
$\{-1,0\}$. This shows that we only need to count (with appropriate sign) the
non-zero eigenvalues of $M_{\pi}$ in order to evaluate the trace. Note that
our finite subset of $\mathbf{t}_{1}^{\lambda_{1}}\cdots\mathbf{t}%
_{n}^{\lambda_{n}}$ ($\lambda_{1},\ldots,\lambda_{n}\in\mathbf{Z}$) indexes a
basis, so we need to count the number of such basis vectors with non-zero
eigenvalue. We introduce the non-standard shorthand $\left\lfloor
x\right\rfloor :=\min(0,x)$. Inspecting eq. \ref{lBT_27} shows that when
running through $\lambda_{i}$ we have

\begin{itemize}
\item $\left\lfloor -c_{w_{i},i}\right\rfloor $ times the eigenvalue $+1$,

\item $\left\lfloor +c_{w_{i},i}\right\rfloor $ times the eigenvalue $-1$.
\end{itemize}

The value of a fixed bracket $(\delta_{0\leq\lambda<-c}-\delta_{-c\leq
\lambda<0})$ - when non-zero - is always either $+1$, or always $-1$. Thus,
the number of non-zero eigenvalues is simply the number of elements within the
hypercube such that each $\lambda_{i}$ lies within the range of length
$\left\lfloor \pm c_{w_{i},i}\right\rfloor $ counted above, and therefore%
\[
\tau M_{\pi}=%
%TCIMACRO{\tprod \nolimits_{i=1}^{n}}%
%BeginExpansion
{\textstyle\prod\nolimits_{i=1}^{n}}
%EndExpansion
(\left\lfloor -c_{w_{i},i}\right\rfloor -\left\lfloor +c_{w_{i},i}%
\right\rfloor )=%
%TCIMACRO{\tprod \nolimits_{i=1}^{n}}%
%BeginExpansion
{\textstyle\prod\nolimits_{i=1}^{n}}
%EndExpansion
(-c_{w_{i},i})=(-1)^{n}%
%TCIMACRO{\tprod \nolimits_{i=1}^{n}}%
%BeginExpansion
{\textstyle\prod\nolimits_{i=1}^{n}}
%EndExpansion
c_{\pi(i),i}%
\]
(because $\left\lfloor -a\right\rfloor -\left\lfloor a\right\rfloor =-a$ for
all $a\in\mathbf{Z}$). We plug this into eq. \ref{lBT_38} and recognize the
usual formula for the determinant. This finishes the proof.
\end{proof}

We are now ready to prove the remaining theorems from the introduction:

\begin{proof}
[Proof of Thms. \ref{Prop_MainResidueThm} \&
\ref{intro_Thm5CocycleAgreesWithBeilinsons}]We use Thm.
\ref{BT_PropDetFormulaForResidue} to obtain Thm. \ref{Prop_MainResidueThm}%
.\ref{resthm_part2}. Then Thm. \ref{Prop_MainResidueThm}.\ref{resthm_part3}
follows as a special case. For Thm. \ref{Prop_MainResidueThm}%
.\ref{resthm_part1} use the shorthands $\pi=P_{1}^{+}=P^{+}$ (following both
the notation of Arbarello, de Concini and Kac and ours). On the one hand we
compute%
\begin{align*}
\lbrack\pi,f_{1}]f_{0}  &  =[P,f_{1}]f_{0}=Pf_{1}f_{0}-f_{1}Pf_{0}%
=[Pf_{0},f_{1}]\\
&  =(P^{+}+P^{-})[P^{+}f_{0},f_{1}]=P^{-}[P^{+}f_{0},f_{1}]+P^{+}[P^{+}%
f_{0},f_{1}]
\end{align*}
and we have $[P^{+}f_{0},f_{1}]+[P^{-}f_{0},f_{1}]=[f_{0},f_{1}]=0$, so this
equals%
\[
=P^{-}[P^{+}f_{0},f_{1}]-P^{+}[P^{-}f_{0},f_{1}]\text{.}%
\]
On the other hand, we unwind%
\begin{align*}
\operatorname*{res}f_{0}\mathrm{d}f_{1}  &  =\left(  -1\right)  ^{1}%
\operatorname*{tr}%
%TCIMACRO{\tsum _{\gamma_{1}\in\{\pm\}}}%
%BeginExpansion
{\textstyle\sum_{\gamma_{1}\in\{\pm\}}}
%EndExpansion
\left(  -1\right)  ^{\gamma_{1}}(P_{1}^{-\gamma_{1}}\operatorname*{ad}%
(f_{\pi(1)})P_{1}^{\gamma_{1}})f_{0}\\
&  =-P^{-}[f_{1},P_{1}^{+}f_{0}]+P^{+}[f_{1},P_{1}^{-}f_{0}]
\end{align*}
and these expressions clearly coincide. Finally Thm.
\ref{intro_Thm5CocycleAgreesWithBeilinsons} is true since we use the cocycle
defined in Def. \ref{BT_Def_TateBeilinsonAbstractResidueMaps}, i.e. it is
constructed exactly as stated in Thm.
\ref{intro_Thm5CocycleAgreesWithBeilinsons}.
\end{proof}

\section{\label{BT_sect_applications_multiloop}Application to Multiloop Lie
Algebras}

Suppose $k$ is a field and $\mathfrak{g}/k$ is a finite-dimensional centreless
Lie algebra (e.g. $\mathfrak{g}$ finite-dimensional, semisimple). Then the
adjoint representation $\operatorname*{ad}:\mathfrak{g}\hookrightarrow
\operatorname*{End}\nolimits_{k}(\mathfrak{g})$ is injective. Thus, we obtain
a Lie algebra inclusion%

\[
i:\mathfrak{g}[\mathbf{t}_{1}^{\pm},\ldots,\mathbf{t}_{n}^{\pm}%
]\hookrightarrow E^{n}(\operatorname*{End}\nolimits_{k}(\mathfrak{g}%
))_{Lie}\text{,}%
\]
where $E$ is the functor described in
\S \ref{TATE_section_InfiniteMatrixAlgebras} (the right-hand side is equipped
with the Lie bracket $[a,b]=ab-ba$ based on the associative algebra
structure). Thus, we have the pullback%
\[
i^{\ast}:H^{n+1}(E^{n}(\operatorname*{End}\nolimits_{R}(\mathfrak{g}%
))_{Lie},k)\rightarrow H^{n+1}(\mathfrak{g}[\mathbf{t}_{1}^{\pm}%
,\ldots,\mathbf{t}_{n}^{\pm}],k)\text{,}%
\]
which we may apply to the class $\operatorname*{res}\nolimits^{\ast}(1)$, see
Def. \ref{BT_Def_TateBeilinsonAbstractResidueMaps}.

\begin{theorem}
Suppose $k$ is a field and $\mathfrak{g}/k$ is a finite-dimensional centreless
Lie algebra. For $Y_{0},\ldots,Y_{n}\in\mathfrak{g}$ we call%
\begin{equation}
B(Y_{0},\ldots,Y_{n}):=\operatorname*{tr}\nolimits_{\operatorname*{End}%
_{k}(\mathfrak{g})}(\operatorname*{ad}(Y_{0})\operatorname*{ad}(Y_{1}%
)\cdots\operatorname*{ad}(Y_{n})) \label{lTateLieCase_GeneralizedKillingForm}%
\end{equation}
the `generalized Killing form'. For $n=1$ and if $\mathfrak{g}$ is semisimple,
this is the classical Killing form of $\mathfrak{g}$.

\begin{enumerate}
\item Then on all Lie cycles admitting a lift under $I$ as in eq.
\ref{lBT_revIIntro}, the pullback~$i^{\ast}\operatorname*{res}\nolimits^{\ast
}(1)\in H^{n+1}(\mathfrak{g}[\mathbf{t}_{1}^{\pm},\ldots,\mathbf{t}_{n}^{\pm
}],k)$ is explicitly given by%
\begin{align*}
&  (i^{\ast}\phi)(Y_{0}\mathbf{t}_{1}^{c_{0,1}}\cdots\mathbf{t}_{n}^{c_{0,n}%
}\wedge\cdots\wedge Y_{n}\mathbf{t}_{1}^{c_{n,1}}\cdots\mathbf{t}_{n}%
^{c_{n,n}})\\
&  =-(-1)^{\frac{n^{2}+n}{2}}\sum_{\pi\in\mathfrak{S}_{n}}\operatorname*{sgn}%
(\pi)B(Y_{\pi(1)},\ldots,Y_{\pi(n)},Y_{0})\prod\limits_{i=1}^{n}c_{\pi
(i),i}\text{.}%
\end{align*}
whenever $\forall i\in\{1,\ldots,n\}:\sum_{p=0}^{n}c_{p,i}=0$ and zero otherwise.

\item If $\mathfrak{g}$ is finite-dimensional and semisimple and $n=1$, then
$i^{\ast}\operatorname*{res}\nolimits^{\ast}(1)\in H^{2}(\mathfrak{g}%
[\mathbf{t}_{1}^{\pm}],k)$ is the universal central extension of the loop Lie
algebra $\mathfrak{g}[\mathbf{t}_{1},\mathbf{t}_{1}^{-1}]$ giving the
associated affine Lie algebra $\widehat{\mathfrak{g}}$ (without extending by a
derivation),%
\[
0\longrightarrow k\left\langle c\right\rangle \longrightarrow\widehat
{\mathfrak{g}}\longrightarrow\mathfrak{g}[\mathbf{t}_{1},\mathbf{t}_{1}%
^{-1}]\longrightarrow0\text{.}%
\]

\end{enumerate}
\end{theorem}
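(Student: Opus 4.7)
The plan is to apply Theorem \ref{BT_MainThmInner} to the cubically decomposed algebra $A := E^n(\operatorname{End}_k(\mathfrak{g}))$ and combine it with Lemma \ref{BTrev_ComputeWedgeResViaTensorRes} to reduce the computation of $\operatorname{res}_\ast$ on cycles admitting a lift under $I$ to the explicit formula for $^\otimes\!\operatorname{res}_\ast$. Since we work with field coefficients, the Universal Coefficient Theorem identifies $H^{n+1}(\mathfrak{g}[\mathbf{t}^\pm],k)$ with $H_{n+1}(\mathfrak{g}[\mathbf{t}^\pm],k)^\ast$, so $i^\ast\phi$ is determined by its values on Lie cycles, and those lifting under $I$ suffice as stated.

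The key computational observation is the factorisation of the elements $f_i := Y_i \mathbf{t}_1^{c_{i,1}}\cdots \mathbf{t}_n^{c_{i,n}}$ when they are realised inside $A$ via
\begin{equation*}
\mathfrak{g}[\mathbf{t}^\pm] \hookrightarrow \operatorname{End}_k(\mathfrak{g})[\mathbf{t}^\pm] \hookrightarrow E^n(\operatorname{End}_k(\mathfrak{g})) = A.
\end{equation*}
Namely, $f_i$ is represented by a matrix supported on a single shifted diagonal (determined by $c_i$) whose non-zero entries all equal $\operatorname{ad}(Y_i)\in \operatorname{End}_k(\mathfrak{g})$. I would then choose the same good system of idempotents $P_j^+$ as in the proof of Theorem \ref{BT_PropDetFormulaForResidue}, i.e. the projectors onto $\mathbf{t}_j^{\lambda_j\geq 0}$. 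These are diagonal matrices with identity or zero entries, so they act only on the $\mathbf{t}$-index structure and commute with multiplication by $\operatorname{ad}$-entries. Consequently, every conjugated building block $P_j^{-\gamma_j} f_{\pi(i)} P_j^{\gamma_j}$ splits as an $\operatorname{ad}(Y_{\pi(i)})$-entry placed according to a combinatorial pattern identical to the scalar Laurent case.

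Because the trace $\tau = \operatorname{Tr}_{\operatorname{End}_k(\mathfrak{g})}\circ\operatorname{tr}\circ\cdots\circ\operatorname{tr}$ applies the partial matrix-trace layers first and the $\operatorname{End}_k(\mathfrak{g})$-trace last, it respects this factorisation. The outer trace produces $\operatorname{Tr}(\operatorname{ad}(Y_{\pi(1)})\cdots \operatorname{ad}(Y_{\pi(n)})\operatorname{ad}(Y_0)) = B(Y_{\pi(1)},\ldots,Y_{\pi(n)},Y_0)$ by definition of the generalised Killing form, while the nested $\operatorname{tr}$'s on the $\mathbf{Z}^n \times \mathbf{Z}^n$ matrix indices reproduce verbatim the calculation already carried out in the proof of Theorem \ref{BT_PropDetFormulaForResidue}, namely $(-1)^n \prod_i c_{\pi(i),i}$ when $\sum_{p=0}^n c_{p,i}=0$ for all $i$, and $0$ otherwise. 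Feeding this into Theorem \ref{BT_MainThmInner} and combining the prefactor $-(-1)^{(n-1)n/2}$ with $(-1)^n$ produces the overall sign $-(-1)^{n(n+1)/2}$, matching the formula claimed in part (1).

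For part (2), I would specialise to $n=1$, so $B$ becomes the classical Killing form of $\mathfrak{g}$ and the formula reads $(i^\ast\phi)(Y_0\mathbf{t}_1^{c_0}\wedge Y_1\mathbf{t}_1^{c_1}) = c_1 B(Y_1,Y_0)$ when $c_0+c_1=0$. By symmetry of the Killing form this is, up to an overall sign, the standard $2$-cocycle $\omega(Y_0\otimes \mathbf{t}^{c_0},Y_1\otimes\mathbf{t}^{c_1}) = c_0\,\delta_{c_0+c_1,0}\,(Y_0,Y_1)$ defining the affine Kac-Moody central extension $\widehat{\mathfrak{g}}$. Non-triviality is immediate from evaluation on $Y_0 \mathbf{t}_1 \wedge Y_0 \mathbf{t}_1^{-1}$ for any $Y_0$ with $B(Y_0,Y_0)\neq 0$, and the well-known classical fact that $H^2(\mathfrak{g}[\mathbf{t}_1,\mathbf{t}_1^{-1}],k)$ is one-dimensional (per simple summand) then identifies the class $i^\ast\phi$ with the universal central extension. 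The main obstacle is the careful bookkeeping of the tensor-product factorisation of $\tau$ through its composition of partial traces, i.e. verifying rigorously that the diagonal shape of $P_j^\pm$ decouples the $\operatorname{End}_k(\mathfrak{g})$-entry algebra from the Laurent-index combinatorics; once this is established, everything reduces cleanly to results already proven earlier in the paper.
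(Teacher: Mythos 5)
Your proposal is correct and follows essentially the same route as the paper: reduce via Lemma \ref{BTrev_ComputeWedgeResViaTensorRes} and Theorem \ref{BT_MainThmInner} to the explicit $\left. ^{\otimes}\operatorname*{res}\nolimits_{\ast}\right.$-formula, choose the same coordinate idempotents as in Theorem \ref{BT_PropDetFormulaForResidue}, observe that $M_{\pi}$ respects the decomposition into the blocks $\mathfrak{g}\mathbf{t}_{1}^{\lambda_{1}}\cdots\mathbf{t}_{n}^{\lambda_{n}}$ so that the trace factors into $B(Y_{\pi(1)},\ldots,Y_{\pi(n)},Y_{0})$ times the Laurent-index count $(-1)^{n}\prod_{i}c_{\pi(i),i}$ already done there, and finally identify the resulting $n=1$ cocycle with the standard affine one. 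The sign bookkeeping and the $n=1$ specialization match the paper's treatment (which likewise invokes the well-known identification, up to nonzero scalar, of the Killing-form cocycle with the affine central extension).
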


\begin{proof}
\textbf{(1)} According to Lemma \ref{BTrev_ComputeWedgeResViaTensorRes}, Thm.
\ref{BT_MainThmInner} and eq. \ref{lBT_39} the cocycle is explicitly given by%
\begin{align*}
\operatorname*{res}\nolimits^{\ast}(1)(f_{0}\wedge\cdots\wedge f_{n})  &
=\left.  ^{\otimes}\operatorname*{res}\nolimits^{\ast}\right.  (1)(f_{0}%
\otimes f_{1}\wedge\cdots\wedge f_{n})\\
&  =\tau%
%TCIMACRO{\tsum _{\pi\in\mathfrak{S}_{n}}}%
%BeginExpansion
{\textstyle\sum_{\pi\in\mathfrak{S}_{n}}}
%EndExpansion
\operatorname*{sgn}(\pi)M_{\pi}\text{, where}\\
M_{\pi}  &  =%
%TCIMACRO{\tsum _{\gamma_{1}\ldots\gamma_{n}\in\{\pm\}}}%
%BeginExpansion
{\textstyle\sum_{\gamma_{1}\ldots\gamma_{n}\in\{\pm\}}}
%EndExpansion
\left(  -1\right)  ^{\gamma_{1}+\cdots+\gamma_{n}}\\
&  (P_{1}^{-\gamma_{1}}f_{\pi(1)}P_{1}^{\gamma_{1}})\cdots(P_{n}^{-\gamma_{n}%
}f_{\pi(n)}P_{n}^{\gamma_{n}})f_{0}\text{.}%
\end{align*}
Note that $M_{\pi}\in E^{n}(\operatorname*{End}\nolimits_{k}(\mathfrak{g}))$.
As we consider the pullback of the cohomology class along $i:\mathfrak{g}%
[t_{1}^{\pm},\ldots,t_{n}^{\pm}]\hookrightarrow E^{n}(\operatorname*{End}%
\nolimits_{k}(\mathfrak{g}))_{Lie}$, it suffices to treat elements
$f_{i}:=Y_{i}t_{1}^{c_{i,1}}\cdots t_{n}^{c_{i,n}}$ with $c_{i,1}%
,\ldots,c_{i,n}\in\mathbf{Z}$ (for $i=0,\ldots,n$) and $Y_{i}\in\mathfrak{g}$.
Note that by our embedding $i$ an element $f_{i}$ is mapped to the
endomorphism $\operatorname*{ad}(Y_{i})t_{1}^{c_{i,1}}\cdots t_{n}^{c_{i,n}}$
in $E^{n}(\operatorname*{End}\nolimits_{k}(\mathfrak{g}))$. Let $\pi
\in\mathfrak{S}_{n}$ be a fixed permutation. In order to compute the trace, it
suffices to study the action of $M_{\pi}$ on the basis elements $X\mathbf{t}%
_{1}^{\lambda_{1}}\cdots\mathbf{t}_{n}^{\lambda_{n}}$ of $\mathfrak{g}%
[t_{1}^{\pm},\ldots,t_{n}^{\pm}]$, where $\lambda_{1},\ldots,\lambda_{n}%
\in\mathbf{Z}$ and $X\in\mathfrak{g}$ runs through a basis of $\mathfrak{g}$.
We denote them with bold letters $\mathbf{t}_{i}$ instead of $t_{i}$ to
distinguish clearly between a basis element and $t_{i}$ as an endomorphism
$t_{i}:x\mapsto t_{i}\cdot x$ in $E^{n}(\operatorname*{End}\nolimits_{k}%
(\mathfrak{g}))$. As in the proof of Thm. \ref{BT_PropDetFormulaForResidue} we
compute%
\[
P_{j}^{-}f_{k}P_{j}^{+}X\mathbf{t}_{1}^{\lambda_{1}}\cdots\mathbf{t}%
_{n}^{\lambda_{n}}=\delta_{0\leq\lambda_{j}<-c_{k,j}}\operatorname*{ad}%
(Y_{k})X\mathbf{t}_{1}^{\lambda_{1}+c_{k,1}}\cdots\mathbf{t}_{n}^{\lambda
_{n}+c_{k,n}}\text{.}%
\]
and as a consequence we find%
\begin{align*}
&
%TCIMACRO{\tsum \nolimits_{\gamma_{j}\in\{\pm\}}}%
%BeginExpansion
{\textstyle\sum\nolimits_{\gamma_{j}\in\{\pm\}}}
%EndExpansion
\left(  -1\right)  ^{\gamma_{j}}(P_{j}^{-\gamma_{j}}x_{k}P_{j}^{\gamma_{j}%
})X\mathbf{t}_{1}^{\lambda_{1}}\cdots\mathbf{t}_{n}^{\lambda_{n}}\\
&  \qquad=(\delta_{0\leq\lambda_{j}<-c_{k,j}}-\delta_{-c_{k,j}\leq\lambda
_{j}<0})\operatorname*{ad}(Y_{k})X\mathbf{t}_{1}^{\lambda_{1}+c_{k,1}}%
\cdots\mathbf{t}_{n}^{\lambda_{n}+c_{k,n}}\text{.}%
\end{align*}
With an inductive computation entirely analogous to eq. \ref{lBT_37} we find%
\begin{align*}
M_{\pi}X\mathbf{t}_{1}^{\lambda_{1}}\cdots\mathbf{t}_{n}^{\lambda_{n}}  &  =%
%TCIMACRO{\tprod \nolimits_{i=1}^{n}}%
%BeginExpansion
{\textstyle\prod\nolimits_{i=1}^{n}}
%EndExpansion
(\delta_{0\leq\lambda_{i}+c_{0,i}+%
%TCIMACRO{\tsum \nolimits_{p=i+1}^{n}}%
%BeginExpansion
{\textstyle\sum\nolimits_{p=i+1}^{n}}
%EndExpansion
c_{w_{p},i}<-c_{w_{i},i}}\\
&  -\delta_{-c_{w_{i},i}\leq\lambda_{i}+c_{0,i}+%
%TCIMACRO{\tsum \nolimits_{p=i+1}^{n}}%
%BeginExpansion
{\textstyle\sum\nolimits_{p=i+1}^{n}}
%EndExpansion
c_{w_{p},i}<0})\\
&  \qquad\operatorname*{ad}(Y_{w_{1}})\cdots\operatorname*{ad}(Y_{w_{n}%
})\operatorname*{ad}(Y_{0})X\\
&  \mathbf{t}_{1}^{\lambda_{1}+\sum\nolimits_{p=0}^{n}c_{p,1}}\cdots
\mathbf{t}_{n}^{\lambda_{n}+\sum\nolimits_{p=0}^{n}c_{p,n}}\text{,}%
\end{align*}
where $w_{i}:=\pi(i)$. Unless $\forall i:%
%TCIMACRO{\tsum \nolimits_{p=0}^{n}}%
%BeginExpansion
{\textstyle\sum\nolimits_{p=0}^{n}}
%EndExpansion
c_{p,i}=0$ holds, $M_{\pi}$ is clearly nilpotent and thus has trace $\tau
M_{\pi}=0$. This condition is clearly independent of $\pi$, showing that
$(i^{\ast}\operatorname*{res}\nolimits^{\ast}(1))(f_{0}\wedge\cdots\wedge
f_{n})=0$ in this case. From now on assume $\forall i:%
%TCIMACRO{\tsum \nolimits_{p=0}^{n}}%
%BeginExpansion
{\textstyle\sum\nolimits_{p=0}^{n}}
%EndExpansion
c_{p,i}=0$. Then $M_{\pi}$ respects the decomposition%
\[
\mathfrak{g}[\mathbf{t}_{1}^{\pm},\ldots,\mathbf{t}_{n}^{\pm}]=%
%TCIMACRO{\tcoprod \nolimits_{\lambda_{1},\ldots,\lambda_{n}\in\mathbf{Z}^{n}%
%}}%
%BeginExpansion
{\textstyle\coprod\nolimits_{\lambda_{1},\ldots,\lambda_{n}\in\mathbf{Z}^{n}}}
%EndExpansion
\mathfrak{g}\mathbf{t}_{1}^{\lambda_{1}}\cdots\mathbf{t}_{n}^{\lambda_{n}}%
\]
and therefore (as $\tau$ is essentially a trace) $\tau M_{\pi}=%
%TCIMACRO{\tsum _{\lambda_{1},\ldots,\lambda_{n}}}%
%BeginExpansion
{\textstyle\sum_{\lambda_{1},\ldots,\lambda_{n}}}
%EndExpansion
\tau M_{\pi}\mid_{\mathfrak{g}\mathbf{t}_{1}^{\lambda_{1}}\cdots\mathbf{t}%
_{n}^{\lambda_{n}}}$.\ For each summand of the latter we obtain%
\begin{align*}
\tau M_{\pi}\mid_{\mathfrak{g}\mathbf{t}_{1}^{\lambda_{1}}\cdots\mathbf{t}%
_{n}^{\lambda_{n}}}  &  =%
%TCIMACRO{\tprod \nolimits_{i=1}^{n}}%
%BeginExpansion
{\textstyle\prod\nolimits_{i=1}^{n}}
%EndExpansion
(\delta_{0\leq\lambda_{i}+c_{0,i}+%
%TCIMACRO{\tsum \nolimits_{p=i+1}^{n}}%
%BeginExpansion
{\textstyle\sum\nolimits_{p=i+1}^{n}}
%EndExpansion
c_{w_{p},i}<-c_{w_{i},i}}\\
&  \qquad-\delta_{-c_{w_{i},i}\leq\lambda_{i}+c_{0,i}+%
%TCIMACRO{\tsum \nolimits_{p=i+1}^{n}}%
%BeginExpansion
{\textstyle\sum\nolimits_{p=i+1}^{n}}
%EndExpansion
c_{w_{p},i}<0})\\
&  \operatorname*{tr}(\operatorname*{ad}(Y_{w_{1}})\cdots\operatorname*{ad}%
(Y_{w_{n}})\operatorname*{ad}(Y_{0}))\text{.}%
\end{align*}
The trace term is independent of $\lambda_{1},\ldots,\lambda_{n}$ (and in the
shape of eq. \ref{lTateLieCase_GeneralizedKillingForm}), so we may rewrite
$\tau M_{\pi}$ as%
\begin{align*}
\tau M_{\pi}  &  =B(Y_{w_{1}},\ldots,Y_{w_{n}},Y_{0})%
%TCIMACRO{\tsum \nolimits_{\lambda_{1},\ldots,\lambda_{n}}}%
%BeginExpansion
{\textstyle\sum\nolimits_{\lambda_{1},\ldots,\lambda_{n}}}
%EndExpansion%
%TCIMACRO{\tprod \nolimits_{i=1}^{n}}%
%BeginExpansion
{\textstyle\prod\nolimits_{i=1}^{n}}
%EndExpansion
(\delta_{0\leq\lambda_{i}+c_{0,i}+%
%TCIMACRO{\tsum \nolimits_{p=i+1}^{n}}%
%BeginExpansion
{\textstyle\sum\nolimits_{p=i+1}^{n}}
%EndExpansion
c_{w_{p},i}<-c_{w_{i},i}}\\
&  -\delta_{-c_{w_{i},i}\leq\lambda_{i}+c_{0,i}+%
%TCIMACRO{\tsum \nolimits_{p=i+1}^{n}}%
%BeginExpansion
{\textstyle\sum\nolimits_{p=i+1}^{n}}
%EndExpansion
c_{w_{p},i}<0})\text{.}%
\end{align*}
For the evaluation of the sum $%
%TCIMACRO{\tsum \nolimits_{\lambda_{1},\ldots,\lambda_{n}}}%
%BeginExpansion
{\textstyle\sum\nolimits_{\lambda_{1},\ldots,\lambda_{n}}}
%EndExpansion
$ we can apply the same eigenvalue count as in the proof of Thm.
\ref{BT_PropDetFormulaForResidue}.\ This time instead of counting eigenvalues,
we count non-zero summands. This yields%
\[
\tau M_{\pi}=(-1)^{n}B(Y_{w_{1}},\ldots,Y_{w_{n}},Y_{0})%
%TCIMACRO{\tprod \nolimits_{i=1}^{n}}%
%BeginExpansion
{\textstyle\prod\nolimits_{i=1}^{n}}
%EndExpansion
c_{w_{i},i}%
\]
and thus our claim. \textbf{(2)} For $n=1$ we obtain
\[
(i^{\ast}\operatorname*{res}\nolimits^{\ast}(1))(Y_{0}\mathbf{t}_{1}^{c_{0,1}%
}\wedge Y_{1}\mathbf{t}_{1}^{c_{1,1}})=-c_{1,1}\delta_{c_{0,1}+c_{1,1}%
=0}B(Y_{1},Y_{0})\text{.}%
\]
This is well-known to be the defining cocycle of the affine Lie algebra
$\widehat{\mathfrak{g}}$ (usually with a positive sign, but the class is only
well-defined up to non-zero scalar multiple anyway).
\end{proof}

The natural further cases of the Virasoro algebra as well as affine Kac-Moody
algebras (i.e. $\widehat{\mathfrak{g}}$ extended by derivations) will be
discussed elsewhere. The computations become more involved, but no further
ideas are needed.

\addcontentsline{toc}{chapter}{References}
\bibliographystyle{amsplain}
\bibliography{Ktheory,ollinewbib}

\end{document}